\newlength{\defbaselineskip}
\newcounter{marnote}
\newcommand{\setlinespacing}[1]%
           {\setlength{\baselineskip}{#1 \defbaselineskip}}
\theoremstyle{plain}
\newtheorem{theorem}{Theorem}[section]
\newtheorem{lemma}[theorem]{Lemma}
\newtheorem{prop}[theorem]{Proposition}
\theoremstyle{definition}
\theoremstyle{remark}
\newtheorem{remark}{Remark}[section]
\numberwithin{equation}{section}
\begin{document}

% ------------------------------------------------------------------------

\title{Asymptotics of the gradient of solutions to the perfect conductivity problem}

\author{HaiGang
Li\footnote{School of Mathematical Sciences, Beijing Normal University, Laboratory of Mathematics and Complex Systems, Ministry of
   Education, Beijing 100875,
China. Corresponding author. Email:
hgli@bnu.edu.cn.}\quad YanYan Li\footnote{Department of
Mathematics, Rutgers University, 110 Frelinghuysen Rd, Piscataway,
NJ 08854, USA. Email: yyli@math.rutgers.edu.}\quad and\quad ZhuoLun Yang\footnote{Department of
Mathematics, Rutgers University, 110 Frelinghuysen Rd, Piscataway,
NJ 08854, USA. Email: zy110@math.rutgers.edu.}\quad }

\date{}

\maketitle

\begin{abstract}
In the perfect conductivity problem of composite material, the gradient of solutions can be arbitrarily large when two inclusions are located very close. To characterize the singular behavior of the gradient in the narrow region between two inclusions, we capture the leading term of the gradient and give a fairly sharp description of such asymptotics. 

\end{abstract}

\section{Introduction and main results}

It is important from an engineering point of view to study gradient estimates for solutions to a class of elliptic equations of divergence form with piecewise constant coefficients, which models the
conductivity problem of a composite material, frequently consisting of inclusions and background media. When the conductivity of inclusions degenerates to be infinity, we call it a perfect conductivity problem. It is known that the electric field, expressed as the gradient, in the the narrow region between inclusions may become arbitrarily large when the distance between two inclusions tends to zero. In this paper we characterize such blow-up rates of the gradient with respect to the distance and establish its asymptotic formula in dimensions two and three, two physically relevant dimensions, for two adjacent general convex inclusions.

Before stating our results, we first describe the nature of our domains. Let $\Omega\subset\mathbb{R}^{n}$, $n=2,3$, be a bounded open set with $C^{2}$ boundary, and let $D_{1}^{*}$ and $D_{2}^{*}$
be two open sets whose closure belonging to $\Omega$, touching at the origin with the inner normal of $\partial{D}_{1}^{*}$ being the positive $x_{n}$-axis. We write variable $x$ as $(x',x_{n})$. Translating $D_{1}^{*}$ and $D_{2}^{*}$ by $\frac{\varepsilon}{2}$ along $x_{n}$-axis, we obtain
$$D_{1}^{\varepsilon}:=D_{1}^{*}+(0',\frac{\varepsilon}{2}),\quad\mbox{and}\quad\,D_{2}^{\varepsilon}:=D_{2}^{*}-(0',\frac{\varepsilon}{2}).$$
When there is no possibility of confusion, we drop the superscripts $\varepsilon$ and denote $D_{1}:=D_{1}^{\varepsilon}$ and $D_{2}:=D_{2}^{\varepsilon}$.

The conductivity problem can be modeled by the following boundary value
problem of the scalar equation with piecewise constant coefficients
\begin{equation}\label{equk}
\begin{cases}
\mathrm{div}\Big(a_{k}(x)\nabla{u}_{k}\Big)=0&\mbox{in}~\Omega,\\
u_{k}=\varphi(x)&\mbox{on}~\partial\Omega,
\end{cases}
\end{equation}
where $\varphi\in{C}^{2}(\partial\Omega)$ is given, and
$$a_{k}(x)=
\begin{cases}
k\in[0,1)\cup(1,\infty]&\mbox{in}~D_{1}\cup{D}_{2},\\
1&\mbox{in}~\widetilde{\Omega}:=\Omega\setminus\overline{D_{1}\cup{D}_{2}}.
\end{cases}
$$
When $k$ is away from 0 and $\infty$, the gradient of the solution of \eqref{equk}, $\nabla{u}_{k}$, is bounded by a constant, independent of the distance $\varepsilon$. Babu\v{s}ka, Andersson, Smith, and Levin \cite{basl} computationally analyzed the damage and fracture in fiber composite materials where the Lam\'{e} system is used. They observed numerically that $|\nabla{u}_{k}|$ remains bounded when the distance $\varepsilon$ tends to zero. Bonnetier and Vogelius \cite{bv} proved that $|\nabla{u}_{k}|$ remains bounded for touching disks $D_{1}$ and $D_{2}$ in dimension $n=2$. The bound depends on the value of $k$. Li and Vogelius  \cite{lv} extended the result to general divergence form second order elliptic equations with piecewise H\"older continuous coefficients in all dimensions, and they proved that $|\nabla{u}_{k}|$ remains bounded as $\varepsilon\rightarrow0$.
They also established  stronger, $\varepsilon$-independent,  $C^{1,\alpha}$ estimates for solutions in the closure of each of the regions $D_1$, $D_2$ and $\widetilde \Omega$. This extension covers domains $D_{1}$ and $D_{2}$ of arbitrary smooth shapes. Li and Nirenberg \cite{ln} extended the results in \cite{lv} to general divergence form  second order elliptic systems including systems of elasticity.

In this paper, we consider the perfect conductivity problem when $k =+\infty$. It was proved by Ammari, Kang and Lim \cite{akl} and Ammari, Kang, H. Lee, J. Lee and Lim \cite{aklll} that, when $D_{1}$ and $D_{2}$ are disks of comparable radii embedded in $\Omega=\mathbb{R}^{2}$, the blow-up rate of the gradient of the solution to the perfect conductivity problem is $\varepsilon^{-1/2}$ as $\varepsilon$ goes to zero; with the lower bound given in \cite{akl} and the upper bound given in \cite{aklll}. Yun in \cite{y1,y2} generalized the above mentioned result by establishing the same lower bound, $\varepsilon^{-1/2}$, for two strictly convex subdomains in $\mathbb{R}^{2}$. More finer results in this line, see \cite{akllz,ly}. Bao, Li and Yin \cite{bly1} introduced a linear functional $Q_{\varepsilon}[\varphi]$ and obtained the optimal bounds
$$
\frac{\rho_{n}(\varepsilon)|Q_{\varepsilon}[\varphi]|}{C\varepsilon}\leq\|\nabla{u}\|_{L^{\infty}(\widetilde{\Omega})}\leq\frac{C\rho_{n}(\varepsilon)|Q_{\varepsilon}[\varphi]|}{\varepsilon}+C\|\varphi\|_{C^{2}(\partial\Omega)},
$$
where $C$ is independent of $\varepsilon$ or $\varphi$, and
$$\rho_{n}(\varepsilon)=
\begin{cases}
\sqrt{\varepsilon},~~&\mbox{for}~n=2;\\
|\log\varepsilon|^{-1},&\mbox{for}~n=3;\\
1,&\mbox{for}~n\geq4.
\end{cases}
$$
It may happen that for some $\varphi$, $|Q_{\varepsilon}[\varphi]|$ has positive lower and upper bounds independent of $\varepsilon$. It may also happen that for some $\varphi\nequiv0$ (independent of $\varepsilon$), $Q_{\varepsilon}[\varphi]=0$. A similar result for $p$-Laplace equation was investigated by Gorb and Novikov \cite{gn}. In particular, for $p=2$, they proved that
\begin{align*}
\lim_{\varepsilon\rightarrow0}\frac{\varepsilon\left\|\nabla{u}\right\|_{L^{\infty}(\widetilde{\Omega})}}{\rho_{n}(\varepsilon)}=\frac{\mathcal{R}_{o}}{C_{o}},\qquad~~\mbox{for}~~n=2,3,
\end{align*}
where $\mathcal{R}_{o}$ is a constant multiple of $Q_{\varepsilon}[\varphi]$, $C_{o}$ is an explicitly computable constant. The rate at which the $L^{\infty}$ norm of the gradient of a special solution for two identical circular inclusions in $\mathbb{R}^{2}$ has been shown in \cite{keller} to be $\varepsilon^{-1/2}$, see also \cite{bc,m}.

After knowing the blow-up rate of $|\nabla{u}|$ with respect to $\varepsilon$, it is desirous and important from the viewpoint of practical applications in engineering to capture such blow-up. Recently, Kang, Lim and Yun \cite{kly} characterize asymptotically the singular part of the solution for two adjacent circular inclusions $B_{1}$ and $B_{2}$ in $\mathbb{R}^{2}$ of radius $r_{1}$ and $r_{2}$ with $\varepsilon$ apart,
$$u(x)=\frac{2r_{1}r_{2}}{r_{1}+r_{2}}(\vec{n}\cdot\nabla{H})(\,p)\Big(\ln|x-p_{1}|-\ln|x-p_{2}|\Big)+g(x),$$
for $x\in\mathbb{R}^{2}\setminus(B_{1}\cup{B}_{2})$, where $H$ is a given entire harmonic function in $\mathbb{R}^{2}$, $p_{1}\in{B}_{1}$ and $p_{2}\in{B}_{2}$ are the fixed point of $R_{1}R_{2}$ and $R_{2}R_{1}$ respectively, $R_{j}$ is the reflection with respect to $\partial{B}_{j}$, $j=1,2$, $\vec{n}$ is the unit vector in the direction of $p_{2}-p_{1}$, and $p$ is the middle point of the shortest line segment connecting $\partial{B}_{1}$ and $\partial{B}_{2}$, and $|\nabla g(x)|$ is bounded independent of $\varepsilon$ on any bounded subset of $\mathbb{R}^{2}\setminus(B_{1}\cup{B}_{2})$. Then 
$$\nabla u(x)=\frac{2r_{1}r_{2}}{r_{1}+r_{2}}(\vec{n}\cdot\nabla{H})(\,p)\Big(\frac{1}{|x-p_{1}|}-\frac{1}{|x-p_{2}|}\Big)+\nabla g(x).$$
In $\mathbb{R}^{3}$, an analogous estimate is obtained by Kang, Lim, and Yun in \cite{kly2} in the narrow region between two balls with the same radius $r$ and when $\sqrt{x_{1}^{2}+x_{2}^{2}}\leq r|\log \varepsilon|^{-2}$. Ammari, Ciraolo, Kang, Lee, Yun \cite{ackly} extended the result in \cite{kly} to the case that inclusions $D_{1}$ and $D_{2}$ are strictly convex domains in $\mathbb{R}^{2}$. For two adjacent spherical inclusions in $\mathbb{R}^{3}$, it was studied by Kang, Lim and Yun \cite{kly2}. Bonnetier and Triki \cite{bt} derived the asymptotics of the eigenvalues of the Poincar\'e variational problem as the distance between the inclusions tends to zero.  The gradient estimates for Lam\'e system with partially infinite coefficients were recently obtained in \cite{bjl,bll,bll2}. For more related works, see \cite{adkl,bly2,bgn,dl,gorb2,ll,lx,lyu} and the references therein.

In this paper, we obtain estimates for perfect conductivity problems in bounded domains in $\mathbb{R}^{n}$, $n=2,3$, analogous to \cite{kly,kly2} in the whole space.  Our estimates in bounded domains in $\mathbb{R}^{3}$ improve those  in \cite{kly2} with a higher order asymptotic expansion. One of the main ingredients in achieving these is
an asymptotic expansion of the Dirichlet  energy of the harmonic function $v_{i}$ in $\widetilde{\Omega}$ satisfying $v_{i}=1$ on $\partial{D}_{i}$, and $v_{i}=0$ on $\partial\widetilde{\Omega}\setminus\partial{D}_{i}$, defined by the following
\begin{equation}\label{equv1}
\begin{cases}
\Delta{v}_{i}=0&\mbox{in}~\widetilde{\Omega},\\
v_{i}=\delta_{ij}&\mbox{on}~\partial{D}_{j},~i,j=1,2,\\
v_{i}=0&\mbox{on}~\partial\Omega.
\end{cases}
\end{equation}
Our method in deriving the asymptotics of the gradients are very different from that in \cite{ackly,kly,kly2}.

We assume that near the origin, $\partial{D}_{i}^{*}$ are respectively the graph of two $C^{2}$ functions $h_{1}$ and $h_{2}$, and for some $R_{0}, \kappa>0$, 
$$h_{1}(x')>h_{2}(x'),\quad\mbox{for}~~0<|x'|<R_{0},$$
\begin{equation}\label{h1h20}
h_{1}(0')=h_{2}(0')=0,\quad\nabla_{x'}h_{1}(0')=\nabla_{x'}h_{2}(0')=0,
\end{equation}
\begin{equation}\label{h1h21}
\nabla^{2}_{x'}(h_{1}(0')-h_{2}(0'))\geq\kappa I,
\end{equation}
where $I$ denotes the $(n-1)\times(n-1)$ identity matrix.

Here is the above mentioned ingredient, which has its independent interest.

\begin{theorem}\label{thm_energy}
Assume the above with $n=2,3$, $\partial{D}_{i}^{*}$ and $\partial \Omega$ are of ${C}^{k,1}$, $k\geq 3$. Let $v_{i}\in{H}^{1}(\widetilde{\Omega})$ be the solution of \eqref{equv1}, $i=1,2$. There exist $\varepsilon$-independent constants $M_{i}$, $i=1,2$,  and $C$, such that
\begin{equation*}\label{a1i}
\left|~\int_{\widetilde{\Omega}}|\nabla{v}_{i}|^{2}
-\Big(\frac{\kappa_{n}}{\rho_{n}(\varepsilon)}+M_{i}~\Big)~\right|\leq\,
\left\{ \begin{aligned}
&C\varepsilon^{\frac{1}{4}-\frac{1}{2k}}, && \mbox{if}~n = 2,\\
&C\varepsilon^{\frac{1}{2}-\frac{1}{2k}}|\log \varepsilon|, &&\mbox{if}~ n = 3,
\end{aligned}\right.
\quad\,i=1,2,
\end{equation*}
where  $\kappa_{2}=
\frac{\sqrt{2}\pi}{\sqrt{\lambda_{1}}}$, 
$\kappa_{3}=\frac{2\pi}{\sqrt{\lambda_{1}\lambda_{2}}}$, and $\lambda_{1}$ and $\lambda_{2}$ are the eigenvalues of $\nabla^{2}_{x'}(h_{1}-h_{2})(0')$.
\end{theorem}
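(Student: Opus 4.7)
The plan is to construct an explicit approximation $\bar v_i$ that captures the singular asymptotics in the narrow neck between $D_1$ and $D_2$, compute the Dirichlet energy of $\bar v_i$ by hand, and control the remainder $v_i - \bar v_i$. In the neck region $\Omega_R := \{(x', x_n) : |x'| < R,\; h_2(x') - \varepsilon/2 < x_n < h_1(x') + \varepsilon/2\}$, I would take the linear $x_n$-interpolation
\[
\bar v_1(x) := \frac{x_n + \varepsilon/2 - h_2(x')}{\delta(x')}, \qquad \delta(x') := h_1(x') - h_2(x') + \varepsilon,
\]
which matches $v_1 = 1$ on the neck portion of $\partial D_1$ and $v_1 = 0$ on the neck portion of $\partial D_2$. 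Outside the neck, extend via a cutoff $\chi(x')$ to the limit function $v_1^*$ on the touching domain $\widetilde\Omega^* := \Omega\setminus\overline{D_1^*\cup D_2^*}$, so that the extended profile carries the correct Dirichlet data on all of $\partial\widetilde\Omega$.

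The central computation is $\int|\nabla\bar v_1|^2$. The vertical derivative dominates: $\partial_{x_n}\bar v_1 = 1/\delta(x')$, so by Fubini
\[
\int_{\Omega_R} |\partial_{x_n}\bar v_1|^2 \, dx = \int_{\{|x'|<R\}} \frac{dx'}{\delta(x')}.
\]
Using the Taylor expansion $h_1(x')-h_2(x') = \tfrac{1}{2}(\lambda_1 y_1^2 + \lambda_2 y_2^2) + O(|y|^{k+1})$ in principal coordinates (dropping $y_2$ when $n=2$): for $n=2$, the elementary arctangent identity gives $\int_{-R}^R \frac{dy_1}{\varepsilon + \lambda_1 y_1^2/2} = \kappa_2/\sqrt\varepsilon + C_R + O(\sqrt\varepsilon)$; for $n=3$, polar coordinates combined with the angular integral $\int_0^{2\pi}\frac{d\theta}{\lambda_1\cos^2\theta+\lambda_2\sin^2\theta} = \frac{2\pi}{\sqrt{\lambda_1\lambda_2}}$ yields $\kappa_3|\log\varepsilon| + C_R + O(1)$. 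The tangential contribution $\int|\nabla_{x'}\bar v_1|^2 = O(1)$ is handled similarly, with error controlled by the Taylor remainder.

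The remainder $w := v_1 - \chi\bar v_1 - (1-\chi)v_1^*$ has zero Dirichlet data on $\partial\widetilde\Omega$. Since $v_1$ is harmonic, integration by parts yields the orthogonality
\[
\int_{\widetilde\Omega} |\nabla v_1|^2 = \int_{\widetilde\Omega} |\nabla(\chi\bar v_1 + (1-\chi)v_1^*)|^2 - \int_{\widetilde\Omega} |\nabla w|^2.
\]
The key estimate is a small-$\varepsilon$ bound on $\|\nabla w\|_{L^2}^2$, obtained by duality against $\Delta(\chi\bar v_1 + (1-\chi)v_1^*)$---whose $L^2$ mass is concentrated in the transition annulus $\{R/2 < |x'| < R\}$---combined with the Li--Vogelius uniform $C^{1,\alpha}$ estimates \cite{lv} away from the neck. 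Collecting all $\varepsilon$-independent terms, namely the $R$-dependent constant from the direct integration, the tangential contribution, and the limit of $\int|\nabla w|^2$ as $\varepsilon\to 0$, defines $M_i$.

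The main obstacle is extracting the sharp error exponent. Choosing $R = R(\varepsilon)$ optimally balances the Taylor-remainder contribution (scaling as a power of $R$ combined with a power of $\varepsilon$, inherited from the $C^{k,1}$ regularity) against the matching error at $|x'| \sim R$ between $\bar v_1$ and $v_1^*$, yielding the stated rates $\varepsilon^{1/4-1/(2k)}$ in $n=2$ and $\varepsilon^{1/2-1/(2k)}|\log\varepsilon|$ in $n=3$. Verifying this balance rigorously---and confirming that all $R$-dependent contributions cancel to leave a clean $\varepsilon$-independent constant $M_i$---is the principal technical hurdle.
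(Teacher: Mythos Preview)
Your overall strategy---approximate $v_1$ by the linear interpolant $\bar v_1$ in the neck, compare with the limit function $v_1^*$ elsewhere, and compute the explicit integral $\int dx'/\delta(x')$---is essentially the paper's approach. The paper splits $\widetilde\Omega$ into three zones $\Omega_{\varepsilon^{1/4}}$, $\Omega_{R_0}\setminus\Omega_{\varepsilon^{1/4}}$, and $\widetilde\Omega\setminus\Omega_{R_0}$, uses $\bar u$ in the first and $v_1^*$ (via its own interpolant $\bar u^*$) in the latter two, and your variable cutoff $R=R(\varepsilon)$ plays the role of the paper's $\varepsilon^\gamma$ with $\gamma=1/4$.

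However, your account of the error mechanism contains a genuine gap. The exponent $\tfrac14-\tfrac1{2k}$ (resp.\ $\tfrac12-\tfrac1{2k}$) does \emph{not} come from the Taylor remainder of $h_1-h_2$: the paper only expands $h_1-h_2$ to third order, dispatching the cubic term by oddness, so $C^{3,1}$ already suffices for that step, and your expansion $h_1-h_2=\text{quadratic}+O(|y|^{k+1})$ is in any case incorrect since the intermediate terms need not vanish. The $k$ enters because the paper first proves $\|v_1-v_1^*\|_{L^\infty}\le C\varepsilon^{1/2}$ on the region $|x'|\ge\varepsilon^{1/4}$ (a maximum-principle argument, Lemma~2.7), then uses $C^{k,1}$ boundary regularity to get $|\nabla^k v_1|,|\nabla^k v_1^*|\le C$ after rescaling, and finally \emph{interpolates} between $C^0$ and $C^k$ to obtain $|\nabla(v_1-v_1^*)|\le C\varepsilon^{\frac12(1-1/k)}|x'|^{-2}$ (Lemma~3.1). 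This gradient estimate is what makes the cross term $\int\nabla v_1^*\cdot\nabla(v_1-v_1^*)$ small with the stated rate. Your duality/Li--Vogelius route does not supply this: the Li--Vogelius estimates in \cite{lv} are for finite conductivity contrast and do not apply to the perfect-conductor limit, and the uniform bound you actually need in the inner neck is the paper's Proposition~2.1, $\|\nabla(v_1-\bar u)\|_{L^\infty(\widetilde\Omega)}\le C$, proved by an iterated energy argument specific to this geometry. A secondary issue: your comparison function $\chi\bar v_1+(1-\chi)v_1^*$ does not have the correct Dirichlet data on $\partial D_i$ outside the neck (since $v_1^*$ equals $\delta_{ij}$ on $\partial D_j^*$, not on the shifted $\partial D_j$), so $w$ is not zero on the boundary and your orthogonality identity fails as written; the paper handles the thin mismatch strips $(\Omega_R\setminus\Omega_{\varepsilon^\gamma})\setminus(\Omega_R^*\setminus\Omega_{\varepsilon^\gamma}^*)$ explicitly.
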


Consider the perfect conductivity problem in the bounded domain $\Omega$:
\begin{equation}\label{equinfty}
\begin{cases}
\Delta{u}=0&\mbox{in}~\widetilde{\Omega},\\
\nabla{u}=0&\mbox{on}~\overline{D}_{i},~i=1,2,\\
u|_{+}=u|_{-}&\mbox{on}~\partial{D}_{i},~i=1,2,\\
\int_{\partial{D}_{i}}\frac{\partial{u}}{\partial\nu^{-}}=0&i=1,2,\\
u=\varphi(x)&\mbox{on}~\partial\Omega,
\end{cases}
\end{equation}
where $\varphi\in{C}^{0}(\partial\Omega)$, and for $x\in\partial{D}_{i}$
$$\frac{\partial{u}}{\partial\nu^{-}}(x):=\lim_{t\rightarrow0^{+}}\frac{u(x)-u(x+t\nu)}{t}.$$
Here and throughout this paper $\nu$ is the outward unit normal to
the domain and the subscript $\pm$ indicates the limit from outside and inside the domain, respectively. $u$ is the weak limit of $u_{k}\in{H}^{1}(\Omega)$, the solution of \eqref{equk}, as $k\rightarrow+\infty$. The existence, uniqueness and regularity of solutions to \eqref{equinfty} can be found in the appendix of \cite{bly1}.

We rewrite \eqref{equinfty} as
\begin{equation}\label{equinfty2}
\begin{cases}
\Delta{u}=0&\mbox{in}~\widetilde{\Omega},\\
u=C_{i}&\mbox{on}~\partial{D}_{i},~i=1,2,\\
\int_{\partial{D}_{i}}\frac{\partial{u}}{\partial\nu^{-}}=0&i=1,2,\\
u=\varphi(x)&\mbox{on}~\partial\Omega,
\end{cases}
\end{equation}
where $C_{1}$ and $C_{2}$ are constants uniquely determined by the third line. As in \cite{bly1}, we decompose the solution $u$ of \eqref{equinfty2} as follows
\begin{equation}\label{decomposition_u}
u(x)=C_{1}v_{1}(x)+C_{2}v_{2}(x)+v_{0}(x),\quad\mbox{in}~~\widetilde{\Omega},
\end{equation}
where $v_{1},v_{2}$ are defined by \eqref{equv1} and $v_{0}$ is the solution of
\begin{equation*}%\label{equv0}
\begin{cases}
\Delta{v}_{0}=0&\mbox{in}~\widetilde{\Omega},\\
v_{0}=0&\mbox{on}~\partial{D}_{1}\cup\partial{D}_{2},\\
v_{0}=\varphi(x)&\mbox{on}~\partial\Omega.
\end{cases}
\end{equation*}

 For $0\leq\,r\leq\,R_{0}$, let
$$ \Omega_r:=\left\{(x',x_{n})\in \mathbb{R}^{n}~\big|~-\frac{\varepsilon}{2}+h_{2}(x')<x_{n}<\frac{\varepsilon}{2}+h_{1}(x'),~|x'|<r\right\}.$$
In order to obtain the asymptotic expansion of $v_{1}$, we introduce an auxiliary function $\bar{u}\in{C}^{k,1}(\widetilde{\Omega})$ , such that $\bar{u}=1$ on $\partial{D}_{1}$, $\bar{u}=0$ on $\partial{D}_{2}\cup\partial\Omega$,
\begin{align}\label{ubar}
\bar{u}(x)
=\frac{x_{n}-h_{2}(x')+\frac{\varepsilon}{2}}{\varepsilon+h_{1}(x')-h_{2}(x')},\quad\mbox{in}~~\Omega_{R_{0}},
\end{align}
and
\begin{equation}\label{nablau_bar_outside}
\|\bar{u}\|_{C^{k,1}(\mathbb{R}^{n}\setminus \Omega_{R_{0}})}\leq\,C.
\end{equation}
Similarly, we can study the asymptotic expansion of $v_2$ through an auxiliary function $\tilde{u}:= 1 - \bar{u}$ in $\Omega_{R_0}$, with $\|\tilde{u}\|_{C^{k,1}(\mathbb{R}^{n}\setminus \Omega_{R_{0}})}\leq\,C$.
Recalling the assumption \eqref{h1h20} and \eqref{h1h21}, a direct computation yields
\begin{equation}\label{nablau_bar_inside}
\left|\partial_{x'}\bar{u}(x)\right|\leq\frac{C|x'|}{\varepsilon+|x'|^{2}},\quad
\partial_{x_{n}}\bar{u}(x)=\frac{1}{\varepsilon+h_{1}(x')-h_{2}(x')},\quad~x\in\Omega_{R_0}.
\end{equation}

Now define a linear functional $Q$ and a constant $\Theta$ as follows:
\begin{equation}\label{def_Q*}
Q[\varphi]:=\int_{\partial{D}_{1}^{*}}\frac{\partial{v}_{0}^{*}}{\partial\nu^{-}}
\int_{\partial{\Omega}}\frac{\partial{v}_{2}^{*}}{\partial\nu}
-\int_{\partial{D}_{2}^{*}}\frac{\partial{v}_{0}^{*}}{\partial\nu^{-}}\int_{\partial{\Omega}}\frac{\partial{v}_{1}^{*}}{\partial\nu},
\end{equation}
and
\begin{equation}\label{def_beta*}
\Theta:=-\kappa_{n}\int_{\partial{\Omega}}\frac{\partial({v}_{1}^{*}+ {v}_{2}^{*})}{\partial\nu}=\kappa_{n}\int_{\widetilde\Omega^{*}}\big|\nabla({v}_{1}^{*}+ {v}_{2}^{*})\big|^{2},
\end{equation}
where  $\kappa_{2}=\frac{\sqrt{2}\pi}{\sqrt{\lambda_{1}}}
$, 
$\kappa_{3}=\frac{2\pi}{\sqrt{\lambda_{1}\lambda_{2}}}$, and for $x\in\partial\Omega$
$$\frac{\partial{u}}{\partial\nu}(x):=\lim_{t\rightarrow0^{+}}\frac{u(x)-u(x-t\nu)}{t}.$$
Note that $\Theta/\kappa_{n}$ is the condenser capacity of $\partial{D}_{1}^{*}\cup\partial{D}_{2}^{*}$ relative to $\partial\Omega$. 
Here  ${v}_{1}^{*},{v}_{2}^{*}$ and ${v}_{0}^{*}$ are defined, respectively, by
\begin{equation}\label{equv1*}
\begin{cases}
\Delta{v}_{i}^{*}=0&\mbox{in}~\widetilde{\Omega}^{*}:=\Omega\setminus\overline{D_{1}^{*}\cup{D}_{2}^{*}},\\
v_{i}^{*}=\delta_{ij}&\mbox{on}~\partial{D}_{j}^{*}\setminus\{0\},\\
v_{i}^{*}=0&\mbox{on}~\partial\Omega,
\end{cases}~~i=1,2,
\quad\mbox{and}\quad
\begin{cases}
\Delta{v}_{0}^{*}=0&\mbox{in}~\widetilde{\Omega}^{*},\\
v_{0}^{*}=0&\mbox{on}~\partial{D}_{1}^{*}\cup\partial{D}_{2}^{*},\\
v_{0}^{*}=\varphi(x)&\mbox{on}~\partial\Omega.
\end{cases}
\end{equation}
The well-definedness and the boundedness that $0\leq\,v_{1}^{*},v_{2}^{*}\leq1$ can be seen from lemma 3.1 of \cite{bly1} and above that.

We have the asymptotic expression of $\nabla{u}$ in the narrow region between $D_{1}$ and $D_{2}$ as follows:
\begin{theorem}\label{thm1}
Let $\Omega, D_{1}^{*}, D_{2}^{*}$ be defined as the above. For $\varphi\in{C}^{0}(\partial{\Omega})$,
let $u\in{H}^{1}(\Omega)\cap{C}^{1}(\overline{\Omega}\setminus{(D_{1}\cup{D}_{2})})$
be the solution to \eqref{equinfty}. Then  
\begin{itemize}
\item[(i)] if $n=2$, $\partial D_{i}^{*}$ and $\partial \Omega$ are of $C^{3,1}$,
\begin{equation}\label{equ_thm1}
\nabla{u}=\frac{Q[\varphi]\sqrt{\varepsilon}}{\Theta}
\nabla\bar{u}+O(1)\|\varphi\|_{C^{0}(\partial\Omega)},\quad\,\mbox{in}~~\widetilde{\Omega};
\end{equation}
\item[(ii)] if $n=3$, $\partial D_{i}^{*}$ and $\partial \Omega$ are of $C^{k,1}$, $k \ge 3$, there exists a positive $\varepsilon-$independent constant $\widetilde{M}$, such that
\begin{equation}\label{equ2_thm1}
\nabla{u}=\frac{Q[\varphi]}{\Theta}\left(\frac{1}{|\log\varepsilon|-\widetilde{M}}+O(1)\varepsilon^{\frac{1}{2}-\frac{1}{2k}}|\log \varepsilon|^{-1}\right)
\nabla\bar{u}+O(1)\|\varphi\|_{C^{0}(\partial\Omega)},\quad\,\mbox{in}~~\widetilde{\Omega},
\end{equation}
\end{itemize}
where $O(1)$ denotes some quantity satisfying $|O(1)|\leq\,C$ for some $\varepsilon-$independent constant $C$.
\end{theorem}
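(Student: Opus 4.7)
The plan is to use the decomposition $u=C_1 v_1+C_2 v_2+v_0$ in \eqref{decomposition_u} and to show that the singular part of $\nabla u$ in $\widetilde\Omega$ is carried entirely by the antisymmetric combination $(C_1-C_2)\nabla\bar u$. Standard estimates for this problem (cf.\ \cite{bly1}) supply the uniform-in-$\varepsilon$ bounds
$$|\nabla v_1-\nabla\bar u|+|\nabla v_2+\nabla\bar u|\leq C,\qquad |\nabla v_0|+|C_1|+|C_2|\leq C\|\varphi\|_{C^{0}(\partial\Omega)}.$$
Expanding $\nabla u$ and collecting the bounded contributions yields
$$\nabla u=(C_1-C_2)\nabla\bar u+O(1)\|\varphi\|_{C^{0}(\partial\Omega)}\quad\text{in}\;\widetilde\Omega,$$
so the proof reduces to pinning down the asymptotics of $C_1-C_2$ at the precision required by \eqref{equ_thm1} and \eqref{equ2_thm1}.

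Imposing the zero-flux conditions in \eqref{equinfty} on \eqref{decomposition_u} produces the $2\times 2$ linear system $a_{i1}C_1+a_{i2}C_2=-b_i$ with $a_{ij}:=\int_{\partial D_i}\partial v_j/\partial\nu^-=\int_{\widetilde\Omega}\nabla v_i\cdot\nabla v_j$ and $b_i:=\int_{\partial D_i}\partial v_0/\partial\nu^-$. Theorem \ref{thm_energy} furnishes the diagonal entries $a_{ii}=\kappa_n/\rho_n(\varepsilon)+M_i+E_n(\varepsilon)$ with $E_2=O(\varepsilon^{1/4-1/(2k)})$ and $E_3=O(\varepsilon^{1/2-1/(2k)}|\log\varepsilon|)$. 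For the off-diagonal entry, the function $w:=v_1+v_2$ is harmonic in $\widetilde\Omega$ with $w=1$ on $\partial D_1\cup\partial D_2$ and $w=0$ on $\partial\Omega$, and converges to $v_1^*+v_2^*$ as $\varepsilon\to 0$. Consequently $\int_{\widetilde\Omega}|\nabla w|^2=\Theta/\kappa_n+o(1)$, and the polarization identity $2a_{12}=\int|\nabla w|^2-a_{11}-a_{22}$ yields
$$a_{12}=-\kappa_n/\rho_n(\varepsilon)+\tfrac12\bigl(\Theta/\kappa_n-M_1-M_2\bigr)+\widetilde E_n(\varepsilon).$$

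Cramer's rule then gives
$$C_1-C_2=\frac{-b_1(a_{22}+a_{12})+b_2(a_{11}+a_{12})}{a_{11}a_{22}-a_{12}^{2}}.$$
Integrating $\Delta v_i=0$ over $\widetilde\Omega$ and using $v_i=\delta_{ij}$ on $\partial D_j$ produces the clean identities $a_{ii}+a_{12}=-\int_{\partial\Omega}\partial v_i/\partial\nu$, and $C^1$-convergence of $v_i$ to $v_i^*$ on a neighborhood of $\partial\Omega$ shows these tend to $-\int_{\partial\Omega}\partial v_i^*/\partial\nu$ at rate $E_n$; likewise $b_i\to\int_{\partial D_i^*}\partial v_0^*/\partial\nu^-$. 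The numerator therefore converges to $Q[\varphi]$ as defined in \eqref{def_Q*}. For the denominator, the algebraic identity $M_1+M_2+(\Theta/\kappa_n-M_1-M_2)=\Theta/\kappa_n$ collapses the cross-terms, leaving
$$a_{11}a_{22}-a_{12}^{2}=\frac{\Theta}{\rho_n(\varepsilon)}+N+\widehat E_n(\varepsilon),$$
with $N:=M_1M_2-\bigl[\tfrac12(\Theta/\kappa_n-M_1-M_2)\bigr]^{2}$ an $\varepsilon$-independent constant. In two dimensions $\Theta/\sqrt\varepsilon$ dominates $N$, producing \eqref{equ_thm1} after absorbing the $O(\sqrt\varepsilon)\|\varphi\|_{C^0}$ remainder into $O(1)\|\varphi\|_{C^0(\partial\Omega)}$. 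In three dimensions one sets $\widetilde M:=-N/\Theta$ and expands $[\Theta(|\log\varepsilon|-\widetilde M)+\widehat E_3]^{-1}$ geometrically to reach \eqref{equ2_thm1}.

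The hard part is keeping the error rates sharp, particularly in three dimensions where the determinant is only logarithmically large while the Theorem \ref{thm_energy} error in $a_{ii}$ already carries a factor of $|\log\varepsilon|$. Three quantitative convergence rates must be pinned down: (i) $\int|\nabla w|^2\to\Theta/\kappa_n$, (ii) $b_i\to\int_{\partial D_i^*}\partial v_0^*/\partial\nu^-$, and (iii) $\int_{\partial\Omega}\partial v_i/\partial\nu\to\int_{\partial\Omega}\partial v_i^*/\partial\nu$. Each reduces to $C^1$-estimates for harmonic functions on $\widetilde\Omega$ away from the narrow region, bootstrapped against the Theorem \ref{thm_energy} rates. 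The delicate check is that dividing the numerator by $\Theta|\log\varepsilon|+N+O(\varepsilon^{1/2-1/(2k)}|\log\varepsilon|^{2})$ costs exactly one inverse power of $|\log\varepsilon|$ in the remainder, yielding the $\varepsilon^{1/2-1/(2k)}|\log\varepsilon|^{-1}$ rate displayed in \eqref{equ2_thm1} rather than a worse power.
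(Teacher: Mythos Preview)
Your overall strategy matches the paper's: decompose $u$, reduce to $(C_1-C_2)\nabla\bar u$, and solve for $C_1-C_2$ via Cramer's rule using Theorem~\ref{thm_energy}. The paper organizes the determinant differently, writing it exactly as $\Theta_\varepsilon/\rho_n(\varepsilon)$ via the column identity $a_{11}a_{22}-a_{12}^2=-a_{11}\alpha_2+a_{12}\alpha_1$ with $\alpha_i=\int_{\partial\Omega}\partial v_i/\partial\nu$; this repackaging is what makes the sharp $n=3$ error go through cleanly.

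Your direct expansion of the determinant has a genuine gap in dimension three. Writing $a_{ii}=K+M_i+e_i$ and $a_{12}=-K+L+e_{12}$ with $K=\kappa_n/\rho_n$ and $L=\tfrac12(\Theta/\kappa_n-M_1-M_2)$, one gets
\[
a_{11}a_{22}-a_{12}^2=\frac{\Theta}{\rho_n}+N+K\,(e_1+e_2+2e_{12})+O(E_n).
\]
The naive bound $e_1+e_2+2e_{12}=O(E_n)$ gives a determinant error $K\cdot O(E_n)=O(\varepsilon^{1/2-1/(2k)}|\log\varepsilon|^{2})$, which is exactly what you wrote. But dividing by this leaves a remainder of size $O(\varepsilon^{1/2-1/(2k)})$ in $C_1-C_2$, one full power of $|\log\varepsilon|$ worse than the rate asserted in \eqref{equ2_thm1}; your final sentence about ``exactly one inverse power of $|\log\varepsilon|$'' does not follow from the displayed denominator. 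The cure is to notice that your polarization identity gives the exact relation $e_1+e_2+2e_{12}=e_w:=\int_{\widetilde\Omega}|\nabla(v_1{+}v_2)|^2-\Theta/\kappa_n$, so the dangerous term is $K\,e_w$; one then needs $e_w=O(E_n\rho_n)$, e.g.\ $e_w=O(\varepsilon|\log\varepsilon|)$, so that $K\,e_w=O(E_n)$ and the determinant error drops to $O(E_n)$.

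That bound on $e_w$ (and the companion rates for $b_i\to\int_{\partial D_i^*}\partial v_0^*/\partial\nu^-$ and $\alpha_i\to\alpha_i^*$) is \emph{not} a bootstrap of Theorem~\ref{thm_energy}; it requires a separate comparison of $v_i$ with $v_i^*$ away from the neck. The paper does this via maximum-principle and Green-function barriers (their Lemmas~2.9--2.11), obtaining $|v_i-v_i^*|\le C\varepsilon^{3/4}$ ($n=2$) or $C\varepsilon|\log\varepsilon|$ ($n=3$) outside $\Omega_{R_0}$, whence the boundary-flux differences inherit these rates. This is a nontrivial ingredient you have waved past; once it is supplied, your route and the paper's coincide and the stated $\varepsilon^{1/2-1/(2k)}|\log\varepsilon|^{-1}$ remainder follows.
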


The rest of this paper is organized as follows. In section \ref{sec_thm1}, we first reduce the proof of Theorem 1.2 to Proposition \ref{prop_v1-u_bar} and Proposition \ref{lem_Qbeta} below, one for the estimates of $\|\nabla(v_{1}-\bar{u})\|_{L^{\infty}(\widetilde{\Omega})}$, the other for the estimate of $C_{1}-C_{2}$, then prove them in Subsection \ref{subsection2.2} and Subsection \ref{subsec_Qbeta}, respectively. Finally, we give the proof of Theorem \ref{thm_energy} in Section \ref{sec_thm_energy}. 

\vspace{.5cm} 

\noindent{\bf{\large Acknowledgements.}} The research of H.G. Li is partially supported by  NSFC (1157 1042), (11631002) and Fok Ying Tung Education Foundation (151003). H.G. Li would like to thank Professor Jiguang Bao for his suggestions and constant encouragement to this work. The research of Y.Y. Li is partially supported by NSF grant DMS-1501004. 
We thank the referees for helpful suggestions which improve the exposition.

\section{The proof of Theorem \ref{thm1}}\label{sec_thm1}

\subsection{The strategy to prove Theorem \ref{thm1}}

This section is devoted to the proof of Theorem \ref{thm1}. We make use of the energy method to single out the singular term of $\nabla{u}$. 
We only need to prove \eqref{equ_thm1} and \eqref{equ2_thm1} with $ \| \varphi \|_{C^0(\partial \Omega)}$ replaced by
$\| \varphi \|_{C^{2}(\partial\Omega)}$.   Indeed, since 
 $u_k$ in \eqref{equk} satisfies  $\|u_k\|_{L^\infty(\Omega)} \le \| \varphi\|_{C^{0}(\partial\Omega)}$, we have by the convergence of $u_k$ to $u$ (see Appendix in \cite{bly1}), 
$\| u \|_{L^\infty (\Omega)} \le \| \varphi \|_{C^0(\partial \Omega)}.$
Taking a slightly smaller domain $\Omega_1 \subset \subset \Omega$, 
 then $\varphi_1 := u~\big|_{\partial \Omega_1}$ satisfies
$\| \varphi_1 \|_{C^{2}(\partial\Omega_1)} \le C \| u \|_{L^\infty(\Omega)} \le C \| \varphi \|_{C^{0}(\partial\Omega)}$ in view of interior derivative estimates
for harmonic functions.   The desired identity \eqref{equ_thm1} follows by working with $u$, $\Omega_1$ and $\varphi_1$. Without loss of generality, we assume that $\|\varphi\|_{C^{2}(\partial\Omega)}=1$, by considering $u/\|\varphi\|_{C^{2}(\partial\Omega)}$ if $\|\varphi\|_{C^{2}(\partial\Omega)}>0$. If $\varphi~\big|_{\partial\Omega}=0$ then $u\equiv0$.

From \eqref{decomposition_u}, we have
\begin{equation*}%\label{decomposition_u2}
\nabla{u}=(C_{1}-C_{2})\nabla{v}_{1}+C_{2}\nabla({v}_{1}+{v}_{2})+\nabla{v}_{0}.
\end{equation*}
Noting that $u=C_{i}$ on $\partial{D}_{i}$ and $\|u\|_{H^{1}(\widetilde\Omega)}\leq\,C$ (independent of $\varepsilon$), using the trace embedding theorem,  we have
\begin{equation}\label{C1C2}
|C_{1}|+|C_{2}|\leq\,C.
\end{equation}
Since $\Delta{v}_{0}=0$ in $\widetilde{\Omega}$ with $v_{0}=0$ on $\partial{D}_{1}\cup\partial{D}_{2}$, and $\Delta(v_{1}+v_{2}-1)=0$ in $\widetilde{\Omega}$ with $v_{1}+v_{2}-1=0$ on $\partial{D}_{1}\cup\partial{D}_{2}$, it follows from lemma 2.3 in \cite{bly1} (or theorem 1.1 in \cite{llby}) and the standard elliptic theory that
\begin{equation}\label{v0_bdd}
\big\|\nabla{v}_{0}\big\|_{L^{\infty}(\widetilde{\Omega})}
\leq\,C,\quad\mbox{and}\quad\big\|\nabla(v_{1}+v_{2})\big\|_{L^{\infty}(\widetilde{\Omega})}
\leq\,C.
\end{equation}

Recalling the definition of $\bar{u}$ in $\Omega_{R_{0}}$, \eqref{ubar}, we first prove that the $L^{\infty}$ norm of $\nabla(v_{1}-\bar{u})$ is bounded.

\begin{prop}\label{prop_v1-u_bar}
Under the assumptions of Theorem \ref{thm_energy}, let $v_{1}\in{H}^1(\widetilde{\Omega})$ be the
weak solution of \eqref{equv1}. Then
\begin{equation}\label{nabla_w_i0}
\|\nabla(v_{1}-\bar{u})\|_{L^{\infty}(\widetilde{\Omega})}\leq\,C.
\end{equation}
Consequently,
\begin{equation}\label{nabla_v_i0}
\frac{1}{C(\varepsilon+|x'|^{2})}\leq|\nabla{v}_{1}(x)|\leq\frac{C}{\varepsilon+|x'|^{2}},\quad~x\in\Omega_{R_{0}},
\end{equation}
and
\begin{equation}\label{nabla_v_i_out}
\|\nabla{v}_{1}\|_{L^{\infty}(\widetilde{\Omega}\setminus\Omega_{R_{0}})}\leq\,C.
\end{equation}
\end{prop}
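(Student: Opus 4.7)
The plan is to study the deviation $w := v_1 - \bar u$. By construction $\bar u$ matches $v_1$ on every component of $\partial \widetilde \Omega$ (namely $1$ on $\partial D_1$ and $0$ on $\partial D_2 \cup \partial \Omega$), so $w = 0$ on $\partial\widetilde \Omega$ and $\Delta w = -\Delta \bar u$ in $\widetilde \Omega$. Differentiating \eqref{ubar} twice and using \eqref{h1h20}--\eqref{h1h21} (so that $|\nabla_{x'} h_i(x')| \le C|x'|$, $|D^2_{x'} h_i| \le C$, and $\varepsilon + h_1(x') - h_2(x') \sim \varepsilon + |x'|^2$), a short computation yields
\begin{equation*}
|\Delta \bar u(x)| \le \frac{C}{\varepsilon + |x'|^2}, \qquad x \in \Omega_{R_0},
\end{equation*}
while $|\Delta \bar u| \le C$ on $\widetilde \Omega \setminus \Omega_{R_0}$ by \eqref{nablau_bar_outside}. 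It therefore suffices to show $\|\nabla w\|_{L^\infty(\widetilde \Omega)} \le C$.

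On $\widetilde \Omega \setminus \Omega_{R_0/2}$ the three boundary components are at uniformly positive distance from one another and $-\Delta \bar u$ is bounded, so the Poisson equation with zero boundary data on a $C^{3,1}$ domain gives $|\nabla w| \le C$ by standard global Schauder estimates. The essential case is $z \in \Omega_{R_0/2}$. Write $r := |z'|$, $\delta_* := \varepsilon + h_1(z') - h_2(z') \sim \varepsilon + r^2$, and $r_* := \max\{r, \sqrt{\varepsilon}\}$. Perform the anisotropic change of coordinates
\begin{equation*}
x' = z' + r_* y', \qquad x_n = \tfrac{1}{2}(h_1 + h_2)(z' + r_* y') + \delta_* y_n,
\end{equation*}
which straightens the narrow strip in a neighborhood of $z$ onto the fixed cylinder $\{|y'| \le 1\} \times \{|y_n| \le \tfrac{1}{2}\}$, and let $W(y) := w(x(y))/\delta_*$, which vanishes on $\{y_n = \pm \tfrac{1}{2}\}$. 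The choice of $r_*$ keeps the ratio $\delta_*/r_*^2$ bounded, so the pull-back of $-\Delta$ is uniformly elliptic with coefficients controlled by bounded derivatives of $h_1, h_2$; the pull-back of $-\Delta \bar u$ is uniformly $L^\infty$-bounded because $\delta_* \cdot |\Delta \bar u| \le C$; and $\|W\|_{L^\infty} \le C$ by a simple quadratic barrier in $y_n$ using $|\Delta W| \le C$ and the zero top/bottom values. Standard interior/boundary gradient estimates for a uniformly elliptic equation on a fixed-shape $C^{1,1}$ domain then give $|\nabla W| \le C$, and unwinding the rescaling yields $|\nabla w(z)| \le C$.

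The pointwise consequences are direct. The upper bound in \eqref{nabla_v_i0} follows from $|\nabla v_1| \le |\nabla \bar u| + |\nabla w|$ combined with \eqref{nablau_bar_inside} and \eqref{nabla_w_i0}. The lower bound follows from
\begin{equation*}
|\partial_{x_n} v_1(x)| \ge |\partial_{x_n} \bar u(x)| - |\partial_{x_n} w(x)| \ge \frac{1}{\delta(x')} - C \ge \frac{c}{\varepsilon + |x'|^2},
\end{equation*}
provided $R_0$ was chosen so small that $\delta(x') \le 1/(2C)$ on $\Omega_{R_0}$. Finally, \eqref{nabla_v_i_out} is the triangle inequality combined with \eqref{nablau_bar_outside} and \eqref{nabla_w_i0}.

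The principal obstacle is the anisotropic rescaling: one must handle separately the regimes $r \lesssim \sqrt{\varepsilon}$ (essentially isotropic rescaling at scale $\sqrt{\varepsilon}$) and $r \gg \sqrt{\varepsilon}$ (anisotropic, with horizontal scale $r$ and vertical scale $r^2$), and in each regime verify that the rescaled operator has ellipticity constants and Jacobian bounds independent of $\varepsilon$. The key structural identity is that the bound $|\Delta \bar u| \sim 1/\delta_*$ exactly cancels the scaling factor $\delta_*$ that the Laplacian picks up under the rescaling, so that the source term in the equation for $W$ remains $L^\infty$-bounded uniformly in $\varepsilon$.
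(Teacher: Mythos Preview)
Your approach has a genuine gap at the rescaling step. With the change of variables $x' = z' + r_* y'$, $x_n = \tfrac12(h_1+h_2)(z'+r_*y') + \delta_* y_n$, the pull-back of $-\Delta_x$ has principal part whose coefficient matrix has eigenvalues of size $r_*^{-2}$ in the $y'$-directions and $\delta_*^{-2}$ in the $y_n$-direction. What you correctly observe is that $\delta_*/r_*^2$ is bounded above and below (indeed $r_* \sim \sqrt{\delta_*}$), but the \emph{ellipticity ratio} of the rescaled operator is $(r_*/\delta_*)^2 \sim 1/\delta_*$, which blows up as $\varepsilon \to 0$ and $|z'| \to 0$. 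So ``standard interior/boundary gradient estimates for a uniformly elliptic equation'' are not available. A related issue is your barrier argument for $\|W\|_{L^\infty} \le C$: the quadratic barrier in $y_n$ controls $W$ only where $W$ vanishes, namely on the top and bottom faces, but on the lateral boundary $\{|y'|=1\}$ one only knows $|W| \le C/\delta_*$, so the comparison fails there.

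The paper's argument avoids this by rescaling \emph{isotropically} at the small scale $\delta = \delta(z')$, which keeps the operator equal to the Laplacian. The price is that after this rescaling one needs control of $\|\nabla w\|_{L^2(\Omega_\delta(z'))}$ at the right scale, and this does \emph{not} follow from the global energy bound $\int_{\widetilde\Omega}|\nabla w|^2 \le C$ alone. The missing ingredient is an iteration of a Caccioppoli-type inequality over a geometric sequence of annuli $\Omega_{t_{i+1}}(z')\setminus\Omega_{t_i}(z')$ with step $\sim \delta$, which drives the local energy down to $\int_{\Omega_\delta(z')}|\nabla w|^2 \le C\delta^n$; only then does the $W^{2,p}$ estimate on the unit-size rescaled cube yield $|\nabla w(z)| \le C$. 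Your final paragraph correctly identifies the two regimes $r \lesssim \sqrt\varepsilon$ and $r \gtrsim \sqrt\varepsilon$, but in each regime the iteration---not a single anisotropic rescaling---is what supplies the needed local energy decay.
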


The proof will be given in Section \ref{subsection2.2}. 

On the other hand, from the third line of \eqref{equinfty2} and \eqref{decomposition_u}, the constants $C_{1}$ and $C_{2}$ are determined by the following linear system
\begin{equation}\label{sysC1C2*}
\begin{cases}
\displaystyle C_{1}\int_{\partial{D}_{1}}\frac{\partial{v}_{1}}{\partial\nu^{-}}+C_{2}\int_{\partial{D}_{1}}\frac{\partial{v}_{2}}{\partial\nu^{-}}+
\int_{\partial{D}_{1}}\frac{\partial{v}_{0}}{\partial\nu^{-}}=0,\\\\
\displaystyle C_{1}\int_{\partial{D}_{2}}\frac{\partial{v}_{1}}{\partial\nu^{-}}+C_{2}\int_{\partial{D}_{2}}\frac{\partial{v}_{2}}{\partial\nu^{-}}+
\int_{\partial{D}_{2}}\frac{\partial{v}_{0}}{\partial\nu^{-}}=0.
\end{cases}
\end{equation}
Similarly, as in \cite{bly1}, we denote
$$a_{ij}:=\int_{\partial{D}_{1}}\frac{\partial{v}_{1}}{\partial\nu^{-}},\quad\quad\,b_{i}:=-\int_{\partial{D}_{i}}\frac{\partial{v}_{0}}{\partial\nu^{-}},\quad\,i,j=1,2.$$
Then \eqref{sysC1C2*} can be written as
\begin{equation*}
\begin{cases}
\displaystyle a_{11}C_{1}+a_{12}C_{2}=b_{1},\\
\displaystyle a_{21}C_{1}+a_{22}C_{2}=b_{2}.
\end{cases}
\end{equation*}
By using Cramer's rule, we have
$$C_{1}=\frac{\begin{vmatrix}
~b_{1}&a_{12}~\\
b_{2}&a_{22}
\end{vmatrix}}{\begin{vmatrix}
~a_{11}&a_{12}~\\
a_{21}&a_{22}
\end{vmatrix}},\quad\,C_{2}=\frac{\begin{vmatrix}
~a_{11}&b_{1}~\\
a_{21}&b_{2}
\end{vmatrix}}{\begin{vmatrix}
~a_{11}&a_{12}~\\
a_{21}&a_{22}
\end{vmatrix}},\quad\mbox{and}\quad\,C_{1}-C_{2}=\frac{\begin{vmatrix}
~b_{1}&a_{11}+a_{12}~\\
b_{2}&a_{21}+a_{22}
\end{vmatrix}}{\begin{vmatrix}
~a_{11}&a_{12}~\\
a_{21}&a_{22}
\end{vmatrix}}.$$
By the Green's formula, it is easy to see that $a_{12}=a_{21}$, and
$$a_{11}+a_{12}=a_{11}+a_{21}=-\int_{\partial{\Omega}}\frac{\partial{v}_{1}}{\partial\nu},\quad\,a_{21}+a_{22}=a_{12}+a_{22}=-\int_{\partial{\Omega}}\frac{\partial{v}_{2}}{\partial\nu}.$$
In view of
$$\begin{vmatrix}
~a_{11}&a_{12}~\\
a_{21}&a_{22}
\end{vmatrix}=\begin{vmatrix}
~a_{11}&a_{11}+a_{12}~\\
a_{21}&a_{21}+a_{22}
\end{vmatrix},$$
and denoting
\begin{equation}\label{def_Q}
Q_{\varepsilon}[\varphi]:=\int_{\partial{D}_{1}}\frac{\partial{v}_{0}}{\partial\nu^{-}}
\int_{\partial{\Omega}}\frac{\partial{v}_{2}}{\partial\nu}
-\int_{\partial{D}_{2}}\frac{\partial{v}_{0}}{\partial\nu^{-}}\int_{\partial{\Omega}}\frac{\partial{v}_{1}}{\partial\nu},
\end{equation}
and
\begin{equation}\label{def_beta}
\Theta_{\varepsilon}:=-\left(\rho_{n}(\varepsilon)\int_{\partial{D}_{1}}\frac{\partial{v}_{1}}{\partial\nu^{-}}\right)\int_{\partial{\Omega}}\frac{\partial{v}_{2}}{\partial\nu}
+\left(\rho_{n}(\varepsilon)\int_{\partial{D}_{1}}\frac{\partial{v}_{2}}{\partial\nu^{-}}\right)\int_{\partial{\Omega}}\frac{\partial{v}_{1}}{\partial\nu},
\end{equation}
we have
$$C_{1}-C_{2}=\rho_{n}(\varepsilon)\frac{Q_{\varepsilon}[\varphi]}{\Theta_{\varepsilon}}.$$
The following asymptotic expansion of $\frac{Q_{\varepsilon}[\varphi]}{\Theta_{\varepsilon}}$ in term of $\rho_{n}(\varepsilon)$ is an essential part in the proof of Theorem \ref{thm1}.
\begin{prop}\label{lem_Qbeta}
Under the same assumptions of Theorem \ref{thm1}, let $Q_{\varepsilon}[\varphi]$ and $\Theta_{\varepsilon}$ be defined by \eqref{def_Q} and \eqref{def_beta}, $Q[\varphi]$ and $\Theta$ be defined by \eqref{def_Q*} and \eqref{def_beta*}, respectively. Then
\begin{align*}
\frac{Q_{\varepsilon}[\varphi]}{\Theta_{\varepsilon}}-\frac{Q[\varphi]}{\Theta}
=\frac{Q[\varphi]}{\Theta}\frac{\widetilde{M}\rho_{n}(\varepsilon)}{1-\widetilde{M}\rho_{n}(\varepsilon)}
+E_n(\varepsilon)\rho_{n}(\varepsilon),\quad\,n=2,3,
\end{align*}
where $\widetilde{M}$ is an $\varepsilon-$independent constant, and
\begin{align*}
E_n(\varepsilon) = 
\begin{cases}
O\left(\varepsilon^{\frac{1}{12}} \right), &\mbox{if}~n=2, \\
O\left(\varepsilon^{\frac{1}{2} - \frac{1}{2k}} |\log \varepsilon| \right), &\mbox{if}~n=3. 
\end{cases}
\end{align*}
\end{prop}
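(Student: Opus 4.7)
The plan is to reduce the Cramer's-rule ratio $Q_\varepsilon/\Theta_\varepsilon$ to an expression in the energies $a_{ij}:=\int_{\widetilde\Omega}\nabla v_i\cdot\nabla v_j$ together with boundary traces on $\partial\Omega$, apply the energy asymptotics of Theorem \ref{thm_energy}, and then compare the traces with their $\varepsilon\to0$ limits coming from $v_i^*,v_0^*$.

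\textbf{Step 1: Green's identities.} Integration by parts gives $a_{ii}=\int_{\partial D_i}\partial_{\nu^-}v_i$, $a_{12}=a_{21}$, $\int_{\partial\Omega}\partial_\nu v_i=-(a_{1i}+a_{2i})$, and $\int_{\partial D_i}\partial_{\nu^-}v_0=\int_{\partial\Omega}\varphi\,\partial_\nu v_i$. A direct algebraic simplification in \eqref{def_beta} then yields the clean identity $\Theta_\varepsilon=\rho_n(\varepsilon)(a_{11}a_{22}-a_{12}^2)$, while \eqref{def_Q} becomes
\[
Q_\varepsilon[\varphi]=\Big(\!\int_{\partial\Omega}\!\varphi\,\partial_\nu v_1\Big)\Big(\!\int_{\partial\Omega}\!\partial_\nu v_2\Big)-\Big(\!\int_{\partial\Omega}\!\varphi\,\partial_\nu v_2\Big)\Big(\!\int_{\partial\Omega}\!\partial_\nu v_1\Big).
\]
Analogous formulas hold for $\Theta$ and $Q[\varphi]$ in terms of $v_i^*$ and $v_0^*$.

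\textbf{Step 2: Expansion of $\Theta_\varepsilon$.} Theorem \ref{thm_energy} yields $a_{ii}=\kappa_n/\rho_n+M_i+O(\delta_n)$ where $\delta_n$ is the rate on the right-hand side. From the polarization identity $2a_{12}=E_+^\varepsilon-a_{11}-a_{22}$, with $E_+^\varepsilon:=\int_{\widetilde\Omega}|\nabla(v_1+v_2)|^2$ uniformly bounded by \eqref{v0_bdd}, one gets $a_{12}=-\kappa_n/\rho_n+\tfrac12(E_+^\varepsilon-M_1-M_2)+O(\delta_n)$. The key algebraic fact is that upon forming $a_{11}a_{22}-a_{12}^2$ the leading $O(1/\rho_n^2)$ terms cancel and the coefficient of $1/\rho_n$ collapses via $M_1+M_2+(E_+^*-M_1-M_2)=E_+^*=\Theta/\kappa_n$; multiplying by $\rho_n$ gives
\[
\Theta_\varepsilon=\Theta+\kappa_n(E_+^\varepsilon-E_+^*)+\widetilde M_1\rho_n+O(\rho_n\delta_n),
\]
for an explicit $\varepsilon$-independent constant $\widetilde M_1$ built out of $M_1,M_2,E_+^*$.

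\textbf{Step 3: Comparison and conclusion.} It remains to bound $E_+^\varepsilon-E_+^*$ and $Q_\varepsilon-Q[\varphi]$. I would compare $v_i,v_0$ to $v_i^*,v_0^*$ on the common region $\widetilde\Omega\cap\widetilde\Omega^*$, handling the thin crescents $\widetilde\Omega\setminus\widetilde\Omega^*$ and $\widetilde\Omega^*\setminus\widetilde\Omega$ (arising from the $\varepsilon/2$ translations) by suitable cutoff/test-function arguments. The uniform $C^1$ bound from Proposition \ref{prop_v1-u_bar} on $v_1-\bar u$ (and the analogous one for $v_2-\tilde u$) is used to split off the explicit singular part of $v_i$ near the origin from its regular remainder, so that what is left is a difference of harmonic functions with uniformly bounded gradients. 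This yields $|E_+^\varepsilon-E_+^*|+|Q_\varepsilon-Q[\varphi]|=O(\rho_n(\varepsilon)\,E_n(\varepsilon))$. Absorbing $\widetilde M_1\rho_n$ together with the leading $\rho_n$-coefficient of $\kappa_n(E_+^\varepsilon-E_+^*)$ into a single $\varepsilon$-independent constant $\widetilde M$, one obtains $\Theta_\varepsilon=\Theta(1-\widetilde M\rho_n)+O(\rho_n E_n)$ and $Q_\varepsilon=Q[\varphi]+O(\rho_n E_n)$; a geometric-series expansion of $Q_\varepsilon/\Theta_\varepsilon$ then delivers the stated identity.

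The sharpness of the rate in Step 3 is the main obstacle: $\widetilde\Omega$ and $\widetilde\Omega^*$ differ precisely in the neighborhood of the touching point where $v_i^*$ is singular, so crude maximum-principle arguments lose the required precision. The remedy is to extract the singular part through $\bar u$ (explaining why Proposition \ref{prop_v1-u_bar} is proved first), reducing the comparison to one between regularly-behaving harmonic functions, and to track all errors to the same order of precision as Theorem \ref{thm_energy}, which by the bookkeeping above is exactly what is needed since $\rho_n\delta_n$ matches $\rho_n E_n$ in both $n=2$ and $n=3$.
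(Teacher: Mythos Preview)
Your Steps 1 and 2 are correct and algebraically equivalent to the paper's Lemma \ref{lem_25}: the identity $\Theta_\varepsilon=\rho_n(a_{11}a_{22}-a_{12}^2)$ is exactly what the paper uses (in the rearranged form $\Theta_\varepsilon=-(\rho_n a_{11})\int_{\partial\Omega}\partial_\nu(v_1+v_2)-\rho_n(\int_{\partial\Omega}\partial_\nu v_1)^2$), and the cancellation you note---that the $O(K\delta_n)$ errors in $a_{11}a_{22}$ and $a_{12}^2$ drop out because $e_{12}=-\tfrac12(e_1+e_2)$---is the same mechanism that produces the paper's $E_n\rho_n$ error.

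The gap is in Step 3. You correctly diagnose that a crude maximum-principle comparison loses precision, but the remedy you propose---splitting off $\bar u$ via Proposition \ref{prop_v1-u_bar}---is exactly the content of the paper's Lemma \ref{lem_29}, and it only yields $\|v_i-v_i^*\|_{L^\infty}\le C\varepsilon^{1/2}$ outside $\Omega_{\varepsilon^{1/4}}$. Passing to $\nabla(v_i-v_i^*)$ near $\partial\Omega$ (by boundary estimates or interpolation against the $C^k$ bound) then gives at best $O(\varepsilon^{1/2(1-1/k)})$, which for $n=2$, $k=3$ is $\varepsilon^{1/3}$. But you need $|Q_\varepsilon-Q|=O(\rho_2 E_2)=O(\varepsilon^{7/12})$, and $\varepsilon^{1/3}$ is too large. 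The paper closes this gap with an additional bootstrap (Lemma \ref{lem_2.10}): starting from the $\varepsilon^{1/2}$ bound, it performs a dyadic decomposition of $\partial D_i^*$ over annuli $\mathcal C_{2^{-k}}\setminus\mathcal C_{2^{-k-1}}$, builds harmonic barriers from Green's functions on each piece, and sums to obtain $\|v_i-v_i^*\|_{L^\infty}\le C\varepsilon^{3/4}$ (for $n=2$) and $C\varepsilon|\log\varepsilon|$ (for $n=3$) outside $\Omega_{R_0}$. This refined rate is what drives Lemma \ref{lem_beta_difference} and hence the bounds on $Q_\varepsilon-Q$ and $E_+^\varepsilon-E_+^*$; without it your Step 3 does not reach the claimed $O(\rho_n E_n)$ for $n=2$.
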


The proof of Proposition \ref{lem_Qbeta} will be given in Section \ref{subsec_Qbeta}. We are now in a position to prove Theorem \ref{thm1} by using Proposition \ref{prop_v1-u_bar} and Proposition \ref{lem_Qbeta}.

\begin{proof}[Proof of Theorem \ref{thm1}] By using \eqref{C1C2}, \eqref{v0_bdd} and \eqref{nabla_w_i0},\begin{equation*}%\label{decomposition_u3}
\nabla{u}=(C_{1}-C_{2})\nabla\bar{u}+O(1).
\end{equation*}
It follows from Proposition \ref{lem_Qbeta} that
\begin{align*}
\frac{C_{1}-C_{2}}{\rho_{n}(\varepsilon)}-\frac{Q[\varphi]}{\Theta}& =\frac{Q_{\varepsilon}[\varphi]}{\Theta_{\varepsilon}}-\frac{Q[\varphi]}{\Theta}\\
&=\frac{Q[\varphi]}{\Theta}\frac{\widetilde{M}\rho_{n}(\varepsilon)}{1-\widetilde{M}\rho_{n}(\varepsilon)}+E_n(\varepsilon)\rho_{n}(\varepsilon).
\end{align*}
So that
\begin{align*}
C_{1}-C_{2}=&\rho_{n}(\varepsilon)\left(\frac{Q[\varphi]}{\Theta}+\frac{Q[\varphi]}{\Theta}\frac{\widetilde{M}\rho_{n}(\varepsilon)}{1-\widetilde{M}\rho_{n}(\varepsilon)}+E_n(\varepsilon)\rho_{n}(\varepsilon)\right)\\
=&\frac{Q[\varphi]}{\Theta}\rho_{n}(\varepsilon)\left(1+\frac{\widetilde{M}\rho_{n}(\varepsilon)}{1-\widetilde{M}\rho_{n}(\varepsilon)} + E_n(\varepsilon)\rho_{n}(\varepsilon)\right)\\
=&\frac{Q[\varphi]}{\Theta}\rho_{n}(\varepsilon)\left(\frac{1}{1-\widetilde{M}\rho_{n}(\varepsilon)}+E_n(\varepsilon)\rho_{n}(\varepsilon)\right).
\end{align*}
Thus, 
\begin{align*}
\nabla{u}(x)=&(C_{1}-C_{2})\nabla\bar{u}(x)+O(1)\\
=&\frac{Q[\varphi]}{\Theta}\rho_{n}(\varepsilon)\left(\frac{1}{1-\widetilde{M}\rho_{n}(\varepsilon)}
+E_n(\varepsilon)\rho_{n}(\varepsilon)\right)
\nabla\bar{u}(x)+O(1).
\end{align*}
Theorem 1.2 follows easily from the above and Proposition 2.1.
\end{proof}

\subsection{Proof of Proposition \ref{prop_v1-u_bar}}\label{subsection2.2}

\begin{proof}[Proof of Proposition \ref{prop_v1-u_bar}]

We denote
\begin{equation*}%\label{def_w}
w:=v_{1}-\bar{u}.
\end{equation*}
By the definition of $v_{1}$ in \eqref{equv1}, and the fact that $v_{1}=\bar{u}$ on $\partial{D}_{1}\cup\partial{D}_{2}\cup\partial\Omega$, we have
\begin{equation}\label{w20}
\begin{cases}
-\Delta{w}=\Delta\bar{u}&\mbox{in}~\widetilde{\Omega},\\
w=0&\mbox{on}~\partial\widetilde{\Omega}.
\end{cases}
\end{equation}
Recalling the definition of $\bar{u}$, \eqref{ubar} and \eqref{nablau_bar_outside}, 
\begin{equation}\label{nabla_u_out}
\|\bar{u}\|_{C^{k,1}(\widetilde\Omega\setminus\Omega_{R_{0}/2})}\leq\,C.\end{equation}
By standard elliptic theories, we know that
\begin{equation*}%\label{nabla_w_out}
|w|+\left|\nabla{w}\right|+\left|\nabla^{2}w\right|\leq\,C,
\quad\mbox{in}~~\widetilde\Omega\setminus\Omega_{R_{0}}.
\end{equation*}
Therefore, to show \eqref{nabla_w_i0}, we only need to prove
$$\left\|\nabla{w}\right\|_{L^{\infty}(\Omega_{R_{0}})}\leq\,C.$$

For $(z',z_{n})\in\Omega_{2R_{0}}$, denote
\begin{equation}\label{delta_x}
\delta(z'):=\varepsilon+h_{1}(z')-h_{2}(z').
\end{equation}
The rest of the proof is divided into three steps.

\noindent{\bf STEP 1.} Boundedness of the energy of $w$ in $\widetilde{\Omega}$:
\begin{equation}\label{energy_w}
\int_{\widetilde{\Omega}}\left|\nabla{w}\right|^{2}\leq\,C.
\end{equation}

By the maximum principle, $0<v_{1}<1$. Recalling the definition of $\bar{u}$, $\bar{u}$ is also bounded. Hence 
\begin{equation}\label{w_bdd}
\|w\|_{L^{\infty}(\widetilde{\Omega})}\leq\,C.
\end{equation}
A direct computation yields, 
$$|\partial_{x_{i}x_{j}}\bar{u}(x)|\leq\frac{C}{\varepsilon+|x'|^{2}},\quad|\partial_{x_{i}x_{n}}\bar{u}(x)|\leq\frac{C|x'|}{(\varepsilon+|x'|^{2})^{2}},
\quad\partial_{x_{n}x_{n}}\bar{u}(x)=0,\quad\mbox{for}~(x',x_{n})\in\Omega_{R_{0}}.$$
So that
\begin{equation}\label{delta_ubar}
|\Delta\bar{u}|\leq\frac{C}{\varepsilon+|x'|^{2}},\qquad\,x\in\Omega_{R_{0}}.
\end{equation}
Now multiply the equation in \eqref{w20} by $w$, integrate by parts, and make use of \eqref{nabla_u_out}, \eqref{w_bdd} and \eqref{delta_ubar}, 
\begin{align*}%\label{energy_w_1}
\int_{\widetilde{\Omega}}|\nabla{w}|^{2}
=\int_{\widetilde{\Omega}}w\left(\Delta\bar{u}\right)
\leq\|w\|_{L^{\infty}(\widetilde{\Omega})}\left(\int_{\Omega_{R_{0}}}|\Delta\bar{u}|+C\right)\leq\,C.
\end{align*}
Thus, \eqref{energy_w} is proved.

\noindent{\bf STEP 2.} Proof of
\begin{equation}\label{energy_w_inomega_z1}
\frac{1}{|\Omega_{\delta}(z')|}\int_{\Omega_{\delta}(z')}\left|\nabla{w}\right|^{2}dx\leq\,C,\quad\mbox{for}~~n\geq2,
\end{equation}
where
$$\Omega_{\delta}(z')=\left\{x\in\mathbb{R}^{n}\bigg|-\frac{\varepsilon}{2}+h_{2}(x')<x_{n}
<\frac{\varepsilon}{2}+h_{1}(x'),|x' - z'|< \delta \right\},$$
and $\delta:=\delta(z')$ is defined in \eqref{delta_x}.

The proof is in spirit similar to that in \cite{llby} and \cite{bll,bll2}, see in particular, the proof of proposition 3.2 in \cite{bll}. For reader's convenience, we outline the proof here. For $0<t<s<R_{0}$, let $\eta$ be a smooth cutoff function satisfying $\eta(x')=1$ if $|x'-z'|<t$, $\eta(x')=0$ if $|x'-z'|>s$, $0\leq\eta(x')\leq1$ if $t\leq|x'-z'|\leq\,s$, and $|\nabla_{x'}\eta(x')|\leq\frac{2}{s-t}$. Multiplying the equation in \eqref{w20} by $w\eta^{2}$ and integrating by parts
leads  to
\begin{align}\label{FsFt11}
\int_{\Omega_{t}(z')}|\nabla{w}|^{2}\leq\,\frac{C}{(s-t)^{2}}\int_{\Omega_{s}(z')}|w|^{2}
+(s-t)^{2}\int_{\Omega_{s}(z')}\left|\Delta\bar{u}\right|^{2}.
\end{align}

{\bf Case 1.} For $\sqrt{\varepsilon}\leq|z'|\leq\,R_{0}$. For $0<s<\frac{2|z'|}{3}$, note that
\begin{align}\label{energy_w_square}
\int_{\Omega_{s}(z')}|w|^{2}
\leq&\,C|z'|^{4}\int_{\Omega_{s}(z')}|\nabla{w}|^{2}, \quad\mbox{if}~\,0<s<\frac{2|z'|}{3}.
\end{align}
Substituting it into \eqref{FsFt11} and denoting
$$\widehat{F}(t):=\int_{\Omega_{t}(z')}|\nabla{w}|^{2},$$
we have
\begin{equation}\label{tildeF111}
\widehat{F}(t)\leq\,\left(\frac{C_{0}|z'|^{2}}{s-t}\right)^{2}\widehat{F}(s)+C(s-t)^{2}\int_{\Omega_{s}(z')}\left|\Delta\bar{u}\right|^{2},
\quad\forall~0<t<s<\frac{2|z'|}{3},
\end{equation}
where $C_0$ is a positive universal constant.

Let $k=\left[\frac{1}{4C_{0}|z'|}\right]$ and $t_{i}=\delta+2C_{0}i\,|z'|^{2}$, $i=0,1,2,\cdots,k$. Taking $s=t_{i+1}$ and $t=t_{i}$ in \eqref{tildeF111}, and in view of \eqref{delta_ubar}, 
\begin{equation}\label{integal_Lubar11}
\int_{\Omega_{t_{i+1}}(z')}\left|\Delta\bar{u}\right|^{2}\leq\int_{|x'-z'|<t_{i+1}}\frac{C}{\varepsilon+|x'|^{2}}dx'\leq\frac{Ct_{i+1}^{n-1}}{|z'|^{2}}\leq\,C(i+1)^{n-1}|z'|^{2(n-2)}.
\end{equation}
we obtain the iteration formula
$$\widehat{F}(t_{i})\leq\,\frac{1}{4}\widehat{F}(t_{i+1})+C(i+1)^{n-1}|z'|^{2n}.$$
After $k$ iterations, using (\ref{energy_w}),
\begin{eqnarray*}
\widehat{F}(t_{0}) \leq (1/4)^{k}\widehat{F}(t_{k})+C|z'|^{2n}\sum_{l=1}^{k}(1/4)^{l-1}l^{n-1}
\leq C|z'|^{2n}.
\end{eqnarray*}
This implies that
\begin{equation*}%\label{energy_w_in_omega_z}
\int_{\Omega_{\delta}(z')}|\nabla{w}|^{2}\leq\,C|z'|^{2n}.
\end{equation*}

{\bf Case 2.} For $0\leq|z'|\leq \sqrt{\varepsilon}$. Estimate \eqref{energy_w_square} becomes
 \begin{align*}%\label{energy_w_square_in}
\int_{\Omega_{s}(z')}|w|^{2}
\leq\,C\varepsilon^{2}\int_{\Omega_{s}(z')}|\nabla{w}|^{2},
\quad\mbox{if}~\,0<s<\sqrt{\varepsilon}.
\end{align*}
Estimate \eqref{tildeF111} becomes, in view of \eqref{FsFt11},
\begin{equation}\label{tildeF111_in}
\widehat{F}(t)\leq\,\left(\frac{C_{1}\varepsilon}{s-t}\right)^{2}\widehat{F}(s)+C(s-t)^{2}\int_{\Omega_{s}(z')}\left|\Delta\bar{u}\right|^{2},
\quad\forall~0<t<s<\sqrt{\varepsilon},
\end{equation}
where $C_1$ is another positive universal constant. Let $k=\left[\frac{1}{4C_{1}\sqrt{\varepsilon}}\right]$ and $t_{i}=\delta+2C_{1}i\varepsilon$, $i=0,1,2,\cdots,k$. Then
by \eqref{tildeF111_in} with $s=t_{i+1}$ and $t=t_{i}$, 
and using, instead of estimate \eqref{integal_Lubar11}, 
\begin{equation}\label{integal_Lubar11_in}
\int_{\Omega_{t_{i+1}}(z')}\left|\Delta\bar{u}\right|^{2}
\leq\int_{|x'-z'|<t_{i+1}}\frac{C}{\varepsilon+|x_{1}|^{2}}dx_{1}\leq\frac{Ct_{i+1}^{n-1}}{\varepsilon}\leq\,C(i+1)^{n-1}\varepsilon^{n-2},\quad\mbox{if}~\,0<s<\sqrt{\varepsilon}.
\end{equation}
we have
$$\widehat{F}(t_{i})\leq\,\frac{1}{4}\widehat{F}(t_{i+1})
+C(i+1)^{n-1}\varepsilon^{n}.$$
After $k$ iterations, using (\ref{energy_w}),
\begin{eqnarray*}
\widehat{F}(t_{0})
\leq (1/4)^{k}\widehat{F}(t_{k})
+C\sum_{l=1}^{k}(1/4)^{l-1}l^{n-1}\varepsilon^{n}
\leq\,
C(1/4)^{\frac{1}{C\sqrt{\varepsilon}}}
+C\varepsilon^{n}\leq\,C\varepsilon^{n}.
\end{eqnarray*}
This implies that
\begin{equation*}%\label{energy_w_in_omega_epsilon_n=2}
\int_{\Omega_{\delta}(z')}|\nabla{w}|^{2}\leq\,C\varepsilon^{n}.
\end{equation*}
In view of the definition of $\delta(z')$, \eqref{energy_w_inomega_z1} is proved.

\noindent{\bf STEP 3.} Proof of \eqref{nabla_w_i0}.

By using the following scaling and translating of variables
\begin{equation*}%\label{changeofvariant}
 \left\{
  \begin{aligned}
  &x'-z'=\delta y',\\
  &x_n=\delta y_n,
  \end{aligned}
 \right.
\end{equation*}
then $\Omega_{\delta}(z')$ becomes $Q_{1}$, where
$$Q_{r}=\left\{y\in\mathbb{R}^{n}\bigg|-\frac{\varepsilon}{2\delta}+\frac{1}{\delta}h_{2}(\delta{y}'+z')<y_{n}
<\frac{\varepsilon}{2\delta}+\frac{1}{\delta}h_{1}(\delta{y}'+z'),|y'|<r\right\},\quad\mbox{for}~~r\leq1,$$ and the top and
bottom boundaries respectively
become
$$
y_n=\hat{h}_{1}(y'):=\frac{1}{\delta}
\left(\frac{\varepsilon}{2}+h_{1}(\delta\,y'+z')\right),\quad|y'|<1,$$
and
$$y_n=\hat{h}_{2}(y'):=\frac{1}{\delta}\left(-\frac{\varepsilon}{2}
+h_{2}(\delta\,y'+z')\right), \quad |y'|<1.
$$
 Then
$$\hat{h}_{1}(0')-\hat{h}_{2}(0'):=\frac{1}{\delta}\left(\varepsilon+h_{1}(z')-h_{2}(z')\right)=1,$$
and by \eqref{h1h20},
$$|\nabla_{x'}\hat{h}_{1}(0')|+|\nabla_{x'}\hat{h}_{2}(0')|\leq\,C|z'|,
\quad|\nabla_{x'}^{2}\hat{h}_{1}(0')|+|\nabla_{x'}^{2}\hat{h}_{2}(0')|\leq\,C.$$
Since $R_{0}$ is small, $\|\hat{h}_{1}\|_{C^{1,1}((-1,1)^{n-1})}$ and $\|\hat{h}_{2}\|_{C^{1,1}((-1,1)^{n-1})}$ are small and $Q_{1}$ is essentially a unit square (or a unit cylinder for $n=3$) as far as applications of the Sobolev embedding theorem and classical $L^{p}$ estimates for elliptic equations are concerned.
Let
\begin{equation*}%\label{def_U}
U(y', y_n):=\bar{u}(z'+\delta{y}',\delta{y}_{n}),\quad\,W(y', y_n):=w(z'+\delta{y}',\delta{y}_{n}),
\quad\,y\in{Q}'_{1},
\end{equation*}
then by \eqref{w20},
\begin{align*}%\label{syswidew2}
-\Delta{W}
=\Delta{U},
\quad\quad\,y\in{Q_{1}},
\end{align*}
where
$$\left|\Delta{U}\right|=\delta^{2}\left|\Delta\bar{u}\right|.$$

Since $W=0$ on the top and
bottom boundaries of $Q_{1}$, using
the Poincar\'{e} inequality,
$$\left\|W\right\|_{H^{1}(Q_{1})}\leq\,C\left\|\nabla{W}\right\|_{L^{2}(Q_{1})}.$$
By $W^{2,p}$ estimates for elliptic equations (see e.g. \cite{gt}) and
the Sobolev embedding theorems, with $p>n$,
\begin{align*}
\left\|\nabla{W}\right\|_{L^{\infty}(Q_{1/2})}\leq\,
C\left\|W\right\|_{W^{2,p}(Q_{1/2})}
\leq\,C\left(\left\|\nabla{W}\right\|_{L^{2}(Q_{1})}+\left\|\Delta{U}\right\|_{L^{\infty}(Q_{1})}\right).
\end{align*}
It follows from $\nabla{W}=\delta\nabla{w}$ that
\begin{equation*}
\left\|\nabla{w}\right\|_{L^{\infty}(\Omega_{\delta/2}(z'))}\leq\,
C\left(\delta^{-n/2}\left\|\nabla{w}\right\|_{L^{2}(\Omega_{\delta}(z'))}
+\delta\left\|\Delta\bar{u}\right\|_{L^{\infty}(\Omega_{\delta}(z'))}\right).
%\label{AAA}
\end{equation*}
Using \eqref{delta_ubar}, \eqref{energy_w_inomega_z1}, and the definition of $\Omega_{\delta}(z')$, Proposition \ref{prop_v1-u_bar} is established.
\end{proof}

\begin{remark}\label{rem31}
We point out that the estimate involving $\Delta\bar{u}$ is very crucial in the above proof, such as \eqref{integal_Lubar11}, \eqref{integal_Lubar11_in} for $\int_{\Omega_{t_{i+1}}(z')}\left|\Delta\bar{u}\right|^{2}
$ and $\delta\left\|\Delta\bar{u}\right\|_{L^{\infty}(\Omega_{\delta}(z'))}$, so that it is essentially important to select an auxiliary function $\bar{u}$ to obtain appropriate estimates \eqref{delta_ubar}. 
\end{remark}

\subsection{The proof of Proposition \ref{lem_Qbeta}}\label{subsec_Qbeta}

Since
\begin{align*}
\frac{Q_{\varepsilon}[\varphi]}{\Theta_{\varepsilon}}-\frac{Q[\varphi]}{\Theta}
=&\frac{Q_{\varepsilon}[\varphi]-Q[\varphi]}{\Theta_{\varepsilon}}+\frac{Q[\varphi]}{\Theta}\frac{\Theta-\Theta_{\varepsilon}}{\Theta_{\varepsilon}},
\end{align*}
it follows that the proof of Proposition \ref{lem_Qbeta} can be reduced to the establishment of three Lemmas in the following.

\begin{lemma}\label{lem_24}
Let $\Theta$ and $\Theta_{\varepsilon}$ be defined as \eqref{def_beta*} and \eqref{def_beta}, respectively. There exists some universal constant $\delta_{0}>0$ such that
\begin{equation*}%\label{beta_lowbound}
\Theta\geq \delta_{0},
\end{equation*}
and $\lim\limits_{\varepsilon\to0}\Theta_{\varepsilon}=\Theta$. Consequently, for sufficiently small $\varepsilon$,
\begin{equation*}%\label{beta*_lowbound}
\Theta_{\varepsilon}\geq \delta_{0}/2.
\end{equation*}
\end{lemma}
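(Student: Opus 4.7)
The plan is to rewrite $\Theta_\varepsilon$ in a form to which Theorem~\ref{thm_energy} applies directly, and then identify the limit. Substituting the Green's-formula identities $\int_{\partial\Omega}\partial_\nu v_1=-(a_{11}+a_{12})$ and $\int_{\partial\Omega}\partial_\nu v_2=-(a_{12}+a_{22})$ (already recorded in the paper just above \eqref{def_Q}) into \eqref{def_beta} produces a cancellation of cross terms and yields the compact formula
\begin{equation*}
\Theta_\varepsilon \;=\; \rho_n(\varepsilon)\bigl(a_{11}a_{22}-a_{12}^{\,2}\bigr).
\end{equation*}
Using in addition the Green's formula $a_{ij}=\int_{\widetilde\Omega}\nabla v_i\cdot\nabla v_j$ one obtains
$\Gamma_\varepsilon := \int_{\widetilde\Omega}|\nabla(v_1+v_2)|^2 = a_{11}+a_{22}+2a_{12},$
which allows me to eliminate $a_{12}$ via $2a_{12}=\Gamma_\varepsilon-(a_{11}+a_{22})$ and arrive at the polynomial identity
\begin{equation*}
a_{11}a_{22}-a_{12}^{\,2} \;=\; \tfrac{1}{2}(a_{11}+a_{22})\Gamma_\varepsilon\;-\;\tfrac{1}{4}\bigl(\Gamma_\varepsilon^{\,2}+(a_{11}-a_{22})^2\bigr).
\end{equation*}

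Inserting the expansion $a_{ii}=\kappa_n/\rho_n(\varepsilon)+M_i+o(1)$ from Theorem~\ref{thm_energy} together with the uniform bound $\Gamma_\varepsilon\le C$ (obtained by using a fixed cutoff that equals $1$ on a neighborhood of $\overline{D_1^*\cup D_2^*}$ and vanishes near $\partial\Omega$ as a competitor for the Dirichlet minimum defining $v_1+v_2$), the previous identity collapses to
\begin{equation*}
\Theta_\varepsilon \;=\; \kappa_n\,\Gamma_\varepsilon + O(\rho_n(\varepsilon)).
\end{equation*}
Hence $\Theta_\varepsilon\to\Theta=\kappa_n\Gamma^*$, where $\Gamma^* := \int_{\widetilde\Omega^*}|\nabla(v_1^*+v_2^*)|^2$, will follow once I prove $\Gamma_\varepsilon\to\Gamma^*$ as $\varepsilon\to 0$.

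This last convergence is the main obstacle, and I would attack it by reformulating both energies as boundary integrals. Setting $w_\varepsilon:=v_1+v_2$ and $w^*:=v_1^*+v_2^*$, Green's identity gives
\begin{equation*}
\Gamma_\varepsilon = -\int_{\partial\Omega}\partial_\nu w_\varepsilon,\qquad \Gamma^* = -\int_{\partial\Omega}\partial_\nu w^*.
\end{equation*}
In a fixed tubular neighborhood $\mathcal{N}\subset\widetilde\Omega$ of $\partial\Omega$, which for all small $\varepsilon$ is disjoint from $D_1^\varepsilon\cup D_2^\varepsilon$, the functions $w_\varepsilon$ are harmonic, uniformly bounded by the maximum principle, and vanish on $\partial\Omega$; standard boundary $C^{1,\alpha}$ estimates therefore give uniform $C^{1,\alpha}(\overline{\mathcal{N}})$ control, and compactness together with uniqueness of the limiting Dirichlet problem on $\widetilde\Omega^*$ forces $w_\varepsilon\to w^*$ in $C^1(\overline{\mathcal{N}})$. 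Passing to the limit in the boundary integral yields $\Gamma_\varepsilon\to\Gamma^*$. Finally, $\Theta>0$ because $w^*$ is a non-constant bounded harmonic function (taking value $1$ on $\partial D_1^*\cup\partial D_2^*\setminus\{0\}$ and $0$ on $\partial\Omega$), so its Dirichlet energy is strictly positive; choosing $\delta_0:=\Theta$ gives $\Theta\ge\delta_0$ trivially, and $\Theta_\varepsilon\to\Theta$ immediately delivers $\Theta_\varepsilon\ge\delta_0/2$ for all sufficiently small $\varepsilon$.
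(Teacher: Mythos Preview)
Your argument is correct but follows a different route from the paper's. The paper establishes $\Theta>0$ by a Hopf-lemma barrier comparison: it constructs an auxiliary harmonic function $\rho$ on $\Omega\setminus\overline{B(\bar x,\bar r)}$ with $\rho\le v_1^*+v_2^*$ and $\partial_\nu\rho\le -1/C$ on $\partial\Omega$, yielding the quantitative bound $-\int_{\partial\Omega}\partial_\nu(v_1^*+v_2^*)\ge|\partial\Omega|/C$; for $\Theta_\varepsilon\to\Theta$ it simply cites \cite{bly1} for the convergence $\int_{\partial\Omega}\partial_\nu v_i\to\int_{\partial\Omega}\partial_\nu v_i^*$ (tacitly combined with $\rho_n(\varepsilon)a_{11}\to\kappa_n$). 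You instead rewrite $\Theta_\varepsilon=\rho_n(\varepsilon)(a_{11}a_{22}-a_{12}^{\,2})$, feed in Theorem~\ref{thm_energy} (a forward reference, but not circular since that theorem does not use this lemma), and collapse everything to $\Theta_\varepsilon=\kappa_n\Gamma_\varepsilon+O(\rho_n(\varepsilon))$, from which both $\Theta>0$ (via $\Gamma^*>0$) and the convergence follow at once. Your route buys the sharper intermediate asymptotic and a one-line positivity argument; the paper's route gives a more explicit lower bound for $\Theta$ and keeps the positivity proof independent of Theorem~\ref{thm_energy}. One remark on your compactness step: to identify the subsequential $C^1(\overline{\mathcal N})$ limit of $w_\varepsilon$ as $w^*|_{\overline{\mathcal N}}$ you must extend the compactness (via interior and local boundary estimates) to compact subsets of $\widetilde\Omega^*$ so that the boundary condition $w_0=1$ on $\partial D_i^*\setminus\{0\}$ can be verified---the phrase ``uniqueness of the limiting Dirichlet problem'' does not by itself carry this, though the extension is routine and is precisely what the \cite{bly1} citation covers.
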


\begin{lemma}\label{lem_25}
Let $\Theta$ and $\Theta_{\varepsilon}$ be defined as \eqref{def_beta*} and \eqref{def_beta}, respectively. Then
\begin{equation}\label{beta_difference}
\Theta-\Theta_{\varepsilon}=\rho_{n}(\varepsilon)\left(M_{1}\int_{\partial\Omega}\frac{\partial(v_{1}^{*}+v_{2}^{*})}{\partial\nu}+\Big(\int_{\partial{\Omega}}\frac{\partial{v}_{1}^{*}}{\partial\nu}\Big)^{2}\right)+E_n(\varepsilon)\rho_{n}(\varepsilon),
\end{equation}
where $M_{1}$ is the constant determined in Theorem \ref{thm_energy}. Consequently,
\begin{equation*}
\frac{\Theta-\Theta_{\varepsilon}}{\Theta}=\widetilde{M}\rho_{n}(\varepsilon)+E_n(\varepsilon)\rho_{n}(\varepsilon),
\end{equation*}
where
\begin{equation*}%\label{defM*}
\widetilde{M}:=-\frac{M_{1}}{\kappa_n}+\frac{(\alpha_{1}^{*})^{2}}{\Theta},\quad\, \alpha_{i}^{*}=\int_{\partial{\Omega}}\frac{\partial{v}_{i}^{*}}{\partial\nu},
\end{equation*}
which depend only on $D_{1}^{*},D_{2}^{*}$ and $\Omega$.
\end{lemma}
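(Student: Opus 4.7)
The strategy is an algebraic reduction that isolates the blow-up of $\Theta_\varepsilon$ into a single quantity to which Theorem~\ref{thm_energy} applies, followed by a domain-perturbation comparison of the $\partial\Omega$-fluxes. Writing $a_{ij}:=\int_{\partial D_i}\partial v_j/\partial\nu^{-}$ and $\alpha_i:=\int_{\partial\Omega}\partial_\nu v_i$, Green's theorem applied to the harmonic function $v_1$ on $\widetilde\Omega$ gives $a_{11}+a_{12}=-\alpha_1$. Using this relation to eliminate $a_{12}$ from the definition \eqref{def_beta} produces the compact identity
\[
\Theta_\varepsilon \;=\; -\rho_n(\varepsilon)\bigl[\,a_{11}(\alpha_1+\alpha_2)+\alpha_1^{\,2}\,\bigr].
\]
The crucial feature of this form is that the diverging quantity $a_{11}=\int_{\widetilde\Omega}|\nabla v_1|^2=O(\rho_n(\varepsilon)^{-1})$ multiplies only the bounded capacity-type factor $\alpha_1+\alpha_2=-\int_{\widetilde\Omega}|\nabla(v_1+v_2)|^2=O(1)$, so the $O(E_n(\varepsilon))$ error term from the expansion of $a_{11}$ will not be amplified.

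Next, I would substitute $\rho_n(\varepsilon)a_{11}=\kappa_n+M_1\rho_n(\varepsilon)+\rho_n(\varepsilon)\,O(E_n(\varepsilon))$ from Theorem~\ref{thm_energy} and use $\Theta=-\kappa_n(\alpha_1^{*}+\alpha_2^{*})$ to obtain, after a straightforward expansion,
\[
\Theta-\Theta_\varepsilon \;=\; \kappa_n\bigl[(\alpha_1-\alpha_1^{*})+(\alpha_2-\alpha_2^{*})\bigr] + \rho_n(\varepsilon)\bigl[M_1(\alpha_1+\alpha_2)+\alpha_1^{\,2}\bigr] + \rho_n(\varepsilon)\,O(E_n(\varepsilon)).
\]
Replacing $\alpha_i$ by $\alpha_i^{*}$ inside the $\rho_n(\varepsilon)$-bracket costs only $\rho_n(\varepsilon)\,O(|\alpha_i-\alpha_i^{*}|)$, so the whole proof comes down to a quantitative estimate of $|\alpha_i-\alpha_i^{*}|$.

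The technical heart, and the main obstacle, is the domain-perturbation bound $|\alpha_1-\alpha_1^{*}|+|\alpha_2-\alpha_2^{*}|\le C\varepsilon$. Since $\varepsilon=O(\rho_n(\varepsilon)E_n(\varepsilon))$ in both dimensions (one has $\varepsilon\le\varepsilon^{7/12}=\rho_2 E_2$ for $n=2$, and $\varepsilon\le\varepsilon^{1/2-1/(2k)}=\rho_3 E_3$ for $n=3$ with $k\ge 3$), this rate is sufficient to absorb the remaining $\kappa_n$-term into $\rho_n(\varepsilon)E_n(\varepsilon)$. The estimate rests on the fact that $\partial D_i$ and $\partial D_i^{*}$ differ only by a vertical translation of $\varepsilon/2$: one constructs a comparison function $\widehat v_i^{*}$ on $\widetilde\Omega$ (using the auxiliary profile $\bar u$ of \eqref{ubar} to absorb the singular behavior near the touching point of $D_1^{*}$ and $D_2^{*}$, where $v_i^{*}$ fails to lie in $H^1$) whose trace agrees with that of $v_i$ up to $O(\varepsilon)$ on the shifted boundaries and vanishes on $\partial\Omega$. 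Interior elliptic estimates applied to $v_i-\widehat v_i^{*}$ on the region bounded away from the origin then control $\int_{\partial\Omega}\partial_\nu(v_i-v_i^{*})$, yielding the desired bound. Assembling these ingredients produces the stated formula for $\Theta-\Theta_\varepsilon$, and the ``Consequently'' clause follows at once by dividing by $\Theta\ge\delta_0>0$ from Lemma~\ref{lem_24}.
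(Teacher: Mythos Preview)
Your algebraic reduction is correct and is exactly what the paper does: rewrite $\Theta_\varepsilon=-\rho_n(\varepsilon)\bigl[a_{11}(\alpha_1+\alpha_2)+\alpha_1^2\bigr]$, insert the expansion of $\rho_n(\varepsilon)a_{11}$ from Theorem~\ref{thm_energy}, and then replace $\alpha_i$ by $\alpha_i^{*}$. The divergence between your proposal and the paper lies entirely in the domain--perturbation step, and there your sketch contains a genuine gap.

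You assert $|\alpha_i-\alpha_i^{*}|\le C\varepsilon$ on the basis of a comparison function $\widehat v_i^{*}$ built from $\bar u$, but this is precisely the delicate point. The paper devotes two lemmas (Lemma~\ref{lem_29} and Lemma~\ref{lem_2.10}) to it and obtains only the weaker rates $C\varepsilon^{3/4}$ for $n=2$ and $C\varepsilon|\log\varepsilon|$ for $n=3$ (Lemma~\ref{lem_beta_difference}). The obstruction is that on the portions of $\partial(D_i\cup D_i^{*})$ at horizontal scale $|x'|\sim r$ one only has $|v_1-v_1^{*}|\lesssim \varepsilon/r^{2}$, not $O(\varepsilon)$ uniformly; the influence of this boundary data must then be propagated outward to $\partial\Omega$ via carefully constructed harmonic barriers $\xi_i^{k}$ supported on dyadic annuli, and in dimension two the sum $\sum_k \varepsilon\,2^{k}\cdot|E_i^{k}|$ over scales down to $\varepsilon^{1/4}$ yields $\varepsilon^{3/4}$, not $\varepsilon$. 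Your one--sentence description (``traces agree up to $O(\varepsilon)$ on the shifted boundaries'') skips exactly this difficulty: the traces do \emph{not} agree up to $O(\varepsilon)$ near the origin, and no amount of tinkering with $\bar u$ changes that, since $|\nabla v_1|\sim(\varepsilon+|x'|^{2})^{-1}$ there.

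The good news is that your argument does not actually need the sharp $O(\varepsilon)$ rate. As you yourself compute, one only needs $|\alpha_i-\alpha_i^{*}|=O(\rho_n(\varepsilon)E_n(\varepsilon))$, i.e.\ $O(\varepsilon^{7/12})$ for $n=2$ and $O(\varepsilon^{1/2-1/(2k)})$ for $n=3$; the paper's rates $\varepsilon^{3/4}$ and $\varepsilon|\log\varepsilon|$ comfortably beat these thresholds. So your overall strategy is sound, but the ``technical heart'' you identify must be replaced by the two--step maximum--principle argument of Lemmas~\ref{lem_29}--\ref{lem_2.10} (first $|v_i-v_i^{*}|\le C\varepsilon^{1/2}$ outside $\Omega_{\varepsilon^{1/4}}$, then a bootstrap via dyadic barriers to the stated rates outside $\Omega_{R_0}$), rather than the single comparison you sketch.
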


\begin{lemma}\label{lem_26}
Let $Q[\varphi]$ and $Q_{\varepsilon}[\varphi]$ be defined as \eqref{def_Q*} and \eqref{def_Q}, respectively. Then

\begin{equation}\label{Q_difference}
Q_{\varepsilon}[\varphi]-Q[\varphi]=\begin{cases}
O(\varepsilon^{3/4}), &\mbox{if}~n=2;\\
O(\varepsilon|\log\varepsilon|), &\mbox{if}~n=3.
\end{cases}
\end{equation}
\end{lemma}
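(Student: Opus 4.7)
The plan is to rewrite both $Q_\varepsilon[\varphi]$ and $Q[\varphi]$ as pure boundary integrals over $\partial\Omega$ via Green's identity, and then to exploit the positive $\varepsilon$-independent distance between $\partial\Omega$ and all four inclusions to reduce the task to an interior comparison of $v_i$ with $v_i^*$.

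First I would apply Green's second identity on $\widetilde\Omega$ to the harmonic pair $(v_0,v_i)$. Using $v_0=\varphi,\,v_i=0$ on $\partial\Omega$ together with $v_0=0,\,v_i=\delta_{ij}$ on $\partial D_j$, one obtains
$$\int_{\partial D_i}\frac{\partial v_0}{\partial\nu^-}=\int_{\partial\Omega}\varphi\,\frac{\partial v_i}{\partial\nu},\qquad i=1,2,$$
and the identical computation on $\widetilde\Omega^*$ gives the same identity with stars. Substituting into \eqref{def_Q}--\eqref{def_Q*} converts
$$Q_\varepsilon[\varphi]=\Big(\int_{\partial\Omega}\varphi\,\partial_\nu v_1\Big)\Big(\int_{\partial\Omega}\partial_\nu v_2\Big)-\Big(\int_{\partial\Omega}\varphi\,\partial_\nu v_2\Big)\Big(\int_{\partial\Omega}\partial_\nu v_1\Big),$$
with the analogous expression (in the $v_i^*$) for $Q[\varphi]$. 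Writing $Q_\varepsilon-Q$ as a telescoping sum $AB-A^*B^*=(A-A^*)B+A^*(B-B^*)$, and using that each factor is $\varepsilon$-uniformly bounded (since $0\le v_i,v_i^*\le 1$ and both are harmonic in a fixed tubular neighbourhood of $\partial\Omega$, so their normal derivatives are uniformly bounded there), the lemma is reduced to
\begin{equation}\label{plan:goal}
\Big|\int_{\partial\Omega}\partial_\nu(v_i-v_i^*)\Big|+\Big|\int_{\partial\Omega}\varphi\,\partial_\nu(v_i-v_i^*)\Big|\le\begin{cases}C\varepsilon^{3/4},&n=2,\\ C\varepsilon|\log\varepsilon|,&n=3.\end{cases}
\end{equation}

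Choose $r_0>0$ small enough that $\mathcal N:=\{x\in\Omega:\mathrm{dist}(x,\partial\Omega)<r_0\}$ is disjoint from $\overline{D_1\cup D_2\cup D_1^*\cup D_2^*}$ for all small $\varepsilon$. On $\mathcal N$ both $v_i$ and $v_i^*$ are harmonic with vanishing trace on $\partial\Omega$, so boundary $C^{1,\alpha}$-estimates for harmonic functions give
$$\|\partial_\nu(v_i-v_i^*)\|_{L^\infty(\partial\Omega)}\le C\|v_i-v_i^*\|_{L^\infty(\Gamma)},\qquad\Gamma:=\partial\mathcal N\setminus\partial\Omega,$$
reducing \eqref{plan:goal} to proving $\|v_i-v_i^*\|_{L^\infty(\Gamma)}\le C\varepsilon^{3/4}$ in $n=2$ and $\le C\varepsilon|\log\varepsilon|$ in $n=3$. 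To obtain this I would extend $v_1^*$ to $\widetilde v_1^*\in H^1(\widetilde\Omega)$ by setting $\widetilde v_1^*=1$ on $D_1^*\cap\widetilde\Omega$, $\widetilde v_1^*=0$ on $D_2^*\cap\widetilde\Omega$, and $\widetilde v_1^*=v_1^*$ on $\widetilde\Omega\cap\widetilde\Omega^*$ (symmetrically for $v_2^*$). The comparison function $W:=v_i-\widetilde v_i^*$ then vanishes on $\partial\Omega$ and on $\partial D_i\cap\overline{D_i^*}$; on the remaining part of $\partial D_i$ (a thin strip of thickness $\le\varepsilon/2$ in $D_i\triangle D_i^*$) its boundary value is $|1-v_i^*|=O(\varepsilon)$ away from the origin, and its distributional Laplacian is the jump of $\partial_\nu v_i^*$ across $\partial D_i^*\cap\widetilde\Omega$. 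An energy estimate for $W$ combined with the Caccioppoli-type iteration from Step 2 of the proof of Proposition \ref{prop_v1-u_bar} propagates $L^2$-smallness of $W$ near the inclusions out to the distant surface $\Gamma$, where an interior estimate upgrades it to $L^\infty$.

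The principal difficulty is extracting the sharp exponent in the interior comparison. A crude $H^1$-estimate does not close, because $|\nabla v_i^*|^2\sim|x'|^{-4}$ is not locally integrable near the origin, so the contribution of the "source'' to $W$ along the portion of $\partial D_i^*$ adjacent to the touching point is not controlled. The rate $\varepsilon^{3/4}$ in two dimensions and the extra factor $|\log\varepsilon|$ in three dimensions emerge only after truncating the origin at a well-chosen intermediate scale and balancing the near-origin energy loss against the $O(\varepsilon)$-thin symmetric difference $D_i\triangle D_i^*$, in the same two-scale spirit as the proof of Theorem \ref{thm_energy}. Once this sharp interior estimate is in hand, the reductions above immediately give \eqref{Q_difference}.
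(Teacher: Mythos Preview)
Your reduction is correct and is exactly what the paper does: Green's identity converts $\int_{\partial D_i}\partial_{\nu^-}v_0$ into $\int_{\partial\Omega}\varphi\,\partial_\nu v_i$ (this is Lemma~\ref{lem_beta_difference2}), and then interior-to-boundary elliptic estimates reduce everything to the $L^\infty$ bound on $v_i-v_i^*$ away from the inclusions (Lemma~\ref{lem_beta_difference}). So the architecture of your argument matches the paper.

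The genuine gap is that you do not prove the crucial estimate
\[
\|v_i-v_i^*\|_{L^\infty(\widetilde\Omega\setminus\Omega_{R_0})}\le
\begin{cases}C\varepsilon^{3/4},&n=2,\\ C\varepsilon|\log\varepsilon|,&n=3,\end{cases}
\]
and your sketched energy/Caccioppoli route does not obviously close. The function $W=v_i-\widetilde v_i^*$ you propose has distributional Laplacian equal to a surface measure on $\partial D_i^*\cap\widetilde\Omega$ with density $[\partial_\nu v_i^*]\sim|x'|^{-2}$ near the origin; this is not an $L^2$ (or even $L^1$) right-hand side, so the iteration of Step~2 in Proposition~\ref{prop_v1-u_bar}, which relies on pointwise bounds for $\Delta\bar u$, cannot be applied as written. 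You yourself flag that ``a crude $H^1$-estimate does not close'' and that the sharp exponents ``emerge only after truncating at a well-chosen intermediate scale,'' but you never carry this out.

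The paper obtains the estimate by a completely different, maximum-principle-based argument (Lemmas~\ref{lem_29} and~\ref{lem_2.10}). First it shows $|v_i-v_i^*|\le C\varepsilon^{1/2}$ outside $\Omega_{\varepsilon^{1/4}}$ by comparing both functions to the explicit profiles $\bar u,\bar u^*$ on the neck boundary and applying the maximum principle. Then it upgrades this to the sharp rate on $\widetilde\Omega\setminus\Omega_{R_0}$ by building positive harmonic barriers: a family $\xi_i=\sum_k\xi_i^k$ constructed from dyadic pieces $E_i^k$ of $\partial D_i^*$ via exterior Green's functions, plus a Kelvin-transformed Poisson kernel $\xi_0$ to control the small ball near the origin. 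Summing the dyadic contributions is what produces $\varepsilon^{3/4}$ in $n=2$ and the $|\log\varepsilon|$ factor in $n=3$. This barrier construction is the missing idea in your proposal.
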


We first prove Proposition \ref{lem_Qbeta} by using Lemma \ref{lem_24}--\ref{lem_26}, whose proofs will be given later.

\begin{proof}[Proof of Proposition \ref{lem_Qbeta}]
By Lemma \ref{lem_24}-\ref{lem_26}, for $n=2$,
\begin{align*}
\frac{Q_{\varepsilon}[\varphi]}{\Theta_{\varepsilon}}-\frac{Q[\varphi]}{\Theta}
&=\frac{Q[\varphi]}{\Theta}\frac{\frac{\Theta-\Theta_{\varepsilon}}{\Theta}}{1-\frac{\Theta-\Theta_{\varepsilon}}{\Theta}}+\frac{Q_{\varepsilon}[\varphi]-Q[\varphi]}{\Theta_{\varepsilon}}\\
&=\frac{Q[\varphi]}{\Theta}\frac{\widetilde{M}\rho_{n}(\varepsilon)+E_n(\varepsilon)\rho_{n}(\varepsilon)}{1-\widetilde{M}\rho_{n}(\varepsilon)-E_n(\varepsilon)\rho_{n}(\varepsilon)}+O(\varepsilon^{3/4})\\
&=\frac{Q[\varphi]}{\Theta}\frac{\widetilde{M}\rho_{n}(\varepsilon)}{1-\widetilde{M}\rho_{n}(\varepsilon)}+E_n(\varepsilon)\rho_{n}(\varepsilon).
\end{align*}
For $n=3$, we only need to replace $O(\varepsilon^{3/4})$ by $O(\varepsilon|\log\varepsilon|)$ in the second line of the above equalities. The proof of Proposition \ref{lem_Qbeta} is completed.
\end{proof}

\subsection{Proof of Lemma \ref{lem_24}}

\begin{proof}[Proof of Lemma \ref{lem_24}]
By the definition of ${v}_{1}^{*}$ and ${v}_{2}^{*}$, \eqref{equv1*}, we have
$$
\begin{cases}
\Delta({v}_{1}^{*}+{v}_{2}^{*})=0&\mbox{in}~\widetilde{\Omega}^{*},\\
{v}_{1}^{*}+{v}_{2}^{*}=1&\mbox{on}~\partial{D}_{1}^{*}\cup\partial{D}_{2}^{*},\\
{v}_{1}^{*}+{v}_{2}^{*}=0&\mbox{on}~\partial\Omega.
\end{cases}
$$
By using the Hopf Lemma, we have
$$\frac{\partial({v}_{1}^{*}+{v}_{2}^{*})}{\partial\nu}<0,\quad\mbox{on}~\partial\Omega.$$
Since $0<{v}_{1}^{*}+{v}_{2}^{*}<1$ in $\widetilde{\Omega}^{*}$ and ${v}_{1}^{*}+{v}_{2}^{*}=1$ on $\partial{D}_{1}^{*}\cup\partial{D}_{2}^{*}$, the boundary gradient estimates of a harmonic function implies that there exists a ball $B(\bar{x}, 2\bar{r})\subset\widetilde\Omega$, such that ${v}_{1}^{*}+{v}_{2}^{*}> 1/2$ in $B(\bar{x}, 2\bar{r})$, where $\bar{r}$ is independent of $\varepsilon$. Let $\rho\in{C}^{2}(\overline{\Omega}\setminus\overline{B(\bar{x}, \bar{r})})$ be the solution to
$$
\begin{cases}
\Delta\rho=0 &\mbox{in}~\Omega\setminus\overline{B(\bar{x}, \bar{r})}),\\
\rho=1/2 ~\mbox{on}~\partial{B}(\bar{x}, \bar{r}),& \rho=0~\mbox{on}~\partial\Omega.
\end{cases}
$$
By the maximum principle, $0 <\rho<1/2$ in $\widetilde{\Omega}^{*}\setminus\overline{B(\bar{x}, \bar{r})})$. Using the Hopf Lemma again, 
$$\frac{\partial\rho}{\partial\nu}\leq -\frac{1}{C},\quad\mbox{on}~\partial\Omega.$$
On the other hand, since $\rho\leq{v}_{1}^{*}+{v}_{2}^{*}$ on the boundary of $\widetilde{\Omega}^{*}\setminus\overline{B(\bar{x}, 2\bar{r})})$, it follows from the maximum principle that $0<\rho\leq{v}_{1}^{*}+{v}_{2}^{*}$ in $\widetilde{\Omega}^{*}\setminus\overline{B(\bar{x}, 2\bar{r})})$. In view of $\rho={v}_{1}^{*}+{v}_{2}^{*}=0$ on $\partial\Omega$, 
$$\frac{\partial\rho}{\partial\nu}\geq\frac{\partial({v}_{1}^{*}+{v}_{2}^{*})}{\partial\nu},\quad\mbox{on}~\partial\Omega.$$
Thus,
$$\int_{\partial\Omega}\frac{\partial({v}_{1}^{*}+{v}_{2}^{*})}{\partial\nu}\leq-\frac{1}{C}|\partial\Omega|.$$
This implies that
$$\Theta\geq\frac{1}{C}.$$
Therefore, using $\int_{\partial\Omega}\frac{\partial{v}_{i}}{\partial\nu}\rightarrow\int_{\partial\Omega}\frac{\partial{v}_{i}^{*}}{\partial\nu}$, $i=1,2$, as $\varepsilon\rightarrow0$, see \cite{bly1}, there exists some positive constant $\delta_{0}$ such that $\Theta\geq\delta_{0}$, and $\Theta_{\varepsilon}\geq\delta_{0}/2$ for sufficiently small $\varepsilon$.
\end{proof}

\subsection{Proof of Lemma \ref{lem_25}}

In the following Lemmas for $v_{i}$ and $v_{i}^{*}$, $i=1,2$, we only give the proofs for case $i=1$, since the case $i=2$ is the same.

\begin{lemma}\label{lem_29}
Let $v_{i}$ and $v_{i}^{*}$ be defined as \eqref{equv1} and \eqref{equv1*}, respectively. Then
\begin{equation}\label{v1-v1*}
\|v_{i}-v_{i}^{*}\|_{L^{\infty}\Big(\Omega\setminus{\big((D_{1}\cup{D}_{1}^{*})\cup(D_{2}\cup{D}_{2}^{*})\cup\Omega_{\varepsilon^{1/4}}\big)}\Big)}\leq\,C\varepsilon^{1/2},\qquad\,i=1,2.
\end{equation}
\end{lemma}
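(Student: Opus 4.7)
The plan is to apply the maximum principle to $w := v_1 - v_1^*$, which is harmonic on the common exterior
$$
D := \widetilde\Omega \cap \widetilde\Omega^* = \Omega \setminus \overline{D_1 \cup D_1^* \cup D_2 \cup D_2^*},
$$
restricted to $D \setminus \overline{\Omega_{\varepsilon^{1/4}}}$. Since $w = 0$ on $\partial\Omega$, it suffices to establish $|w| \le C\varepsilon^{1/2}$ on the remaining boundary pieces: the portions of $\partial(D_j \cup D_j^*)$ (for $j=1,2$) lying outside $\overline{\Omega_{\varepsilon^{1/4}}}$, and the cylindrical piece $C_\varepsilon := \{|x'|=\varepsilon^{1/4},\ h_2(x') < x_n < h_1(x')\}$.

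As a key preliminary, I would first establish an $\varepsilon=0$ analogue of Proposition \ref{prop_v1-u_bar} for $v_1^*$. Introduce the limiting auxiliary function $\bar u^*(x',x_n) := (x_n - h_2(x'))/(h_1(x')-h_2(x'))$ on the limiting gap, extended to a suitable $C^{k,1}$ function on $\widetilde\Omega^*$ with the correct boundary values. The Caccioppoli iteration and $W^{2,p}$ rescaling of Subsection \ref{subsection2.2} transfer, with $\delta^*(z') := h_1(z') - h_2(z') \sim |z'|^2$ in place of $\delta(z')$ and $\int|\Delta \bar u^*|^2$ controlled via $|\Delta \bar u^*| \le C/|x'|^2$; these arguments use only the quadratic lower bound \eqref{h1h21} and are robust as $\varepsilon \to 0$. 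This yields $\|\nabla(v_1^* - \bar u^*)\|_{L^\infty(\widetilde\Omega^*\cap\{|x'|\ge r\})} \le C_r$ for each fixed $r > 0$, and in particular $|\nabla v_1^*(x)| \le C/|x'|^2$ for $x$ near the touching point.

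With this tool in hand I bound $|w|$ piecewise on the boundary. \emph{Inside $\Omega_{R_0}$:} near origin $D_j \subset D_j^*$, so the only relevant piece is $\partial D_1^*\cap\{|x'| \ge \varepsilon^{1/4}\}$ (and $\partial D_2^*$ symmetrically). At a point $x = (x',h_1(x'))$ on this piece, $v_1^*(x) = 1$; Proposition \ref{prop_v1-u_bar} together with $(v_1 - \bar u)|_{\partial D_1} = 0$ and $\|\nabla(v_1-\bar u)\|_{L^\infty}\le C$ gives $|v_1(x) - \bar u(x)| \le C\varepsilon$, and the explicit formula $\bar u(x',h_1(x')) = 1 - \varepsilon/(2(\varepsilon+h_1-h_2))$ together with $h_1(x')-h_2(x') \ge c|x'|^2 \ge c\varepsilon^{1/2}$ yields $|\bar u(x) - 1| \le C\varepsilon^{1/2}$; hence $|w(x)| \le C\varepsilon^{1/2}$. \emph{Outside $\Omega_{R_0}$:} both $v_1$ and $v_1^*$ have uniformly bounded gradients up to their respective boundaries, and a direct perturbation using the Hausdorff distance $\varepsilon/2$ between $\partial D_j$ and $\partial D_j^*$ gives $|w| \le C\varepsilon$. \emph{On $C_\varepsilon$:} combining $|v_1 - \bar u| \le C(h_1 - h_2 + \varepsilon) \sim C\varepsilon^{1/2}$ (from Proposition \ref{prop_v1-u_bar}), the analogous $|v_1^* - \bar u^*| \le C(h_1-h_2) \sim C\varepsilon^{1/2}$ (from its $v_1^*$-counterpart), and the direct computation $|\bar u - \bar u^*| = O(\varepsilon/(h_1 - h_2)) = O(\varepsilon^{1/2})$ delivers $|w| \le C\varepsilon^{1/2}$. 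The maximum principle then gives the conclusion; the case $i = 2$ is identical by symmetry.

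The main technical obstacle is carrying over the full argument of Proposition \ref{prop_v1-u_bar} to the limiting problem. Although the iteration is robust under $\varepsilon \to 0$ and depends only on the quadratic vanishing in \eqref{h1h20}--\eqref{h1h21}, one must carefully verify that the Caccioppoli inequality, the choice of iteration parameters and endpoints $t_i$, and the final $W^{2,p}$ rescaling all retain their uniform constants in the limiting geometry of $\widetilde\Omega^*$ near the origin.
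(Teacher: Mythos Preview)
Your proposal is correct and follows essentially the same approach as the paper: apply the maximum principle to $v_1-v_1^*$ on the common exterior with $\Omega_{\varepsilon^{1/4}}$ removed, control the boundary pieces on $\partial(D_j\cup D_j^*)$ via Proposition~\ref{prop_v1-u_bar} (and its $\varepsilon=0$ analogue for $v_1^*$ with the auxiliary $\bar u^*$, which the paper also introduces and invokes in exactly the way you describe), and control the cylindrical piece $\{|x'|=\varepsilon^{1/4}\}$ via the decomposition $(v_1-\bar u)+(\bar u-\bar u^*)+(\bar u^*-v_1^*)$. The only cosmetic differences are that the paper carries a general parameter $\beta$ and sets $\beta=1/4$ at the end, and on $\partial D_1^*\cap\{|x'|\ge\varepsilon^{1/4}\}$ it bounds $|v_1-1|$ directly by the mean value theorem with the pointwise gradient bound \eqref{nabla_v_i0} rather than your two-step $(v_1-\bar u)+(\bar u-1)$ splitting.
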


\begin{proof}
We will first consider the difference $v_{1}-v_{1}^{*}$ on the boundary of $\Omega\setminus(D_{1}\cup{D}_{2}\cup{D}_{1}^{*}\cup{D}_{2}^{*}\cup\Omega_{\varepsilon^{1/2-\beta}})$, where $0<\beta<1/2$ (small, to be determined later), then use the maximum principle and boundary estimates for elliptic equations to obtain \eqref{v1-v1*}.

{\bf STEP 1.}
First consider the parts on the boundary $\partial(D_{1}\cup{D}_{1}^{*})$. It can be divided into two parts: (a) $\partial{D}_{1}^{*}\setminus{D}_{1}$ and (b) $\partial{D}_{1}\setminus{D}_{1}^{*}$.

(a) For $x\in\partial{D}_{1}^{*}\setminus{D}_{1}$, we introduce a cylinder
$$\mathcal{C}_{r}:=\left\{x\in\mathbb{R}^{n}~\big|~|x'|<r,~-\frac{\varepsilon}{2}+2\min_{|x'|=r}h_{2}(x')\leq\,x_{n}\leq\frac{\varepsilon}{2}+2\max_{|x'|=r}h_{1}(x')\right\},$$
for $r\leq\,R_{0}$. 

(a1) For $x\in\partial{D}_{1}^{*}\cap(\mathcal{C}_{R_{0}}\setminus\mathcal{C}_{\varepsilon^{1/2-\beta}})$, by mean value theorem and estimate \eqref{nabla_v_i0}, we have, for some $\theta_{\varepsilon}\in(0,1)$
\begin{align*}
|v_{1}(x)-v_{1}^{*}(x)|=|v_{1}(x)-1|=&\left|v_{1}(x',h_{1}(x'))-v_{1}(x',\frac{\varepsilon}{2}+h_{1}(x'))\right|\\
=&\left|\partial_{x_{n}}v_{1}(x',\frac{\theta_{\varepsilon}\varepsilon}{2}+h_{1}(x'))\right|\cdot\frac{\varepsilon}{2}\\
\leq&\,\frac{C}{\varepsilon+|x'|^{2}}\cdot\frac{\varepsilon}{2} \leq\,\frac{C\varepsilon}{\varepsilon^{1-2\beta}}=\,C\varepsilon^{2\beta}.
\end{align*}

(a2) For $x\in\partial{D}_{1}^{*}\cap(\widetilde\Omega\setminus\Omega_{R_{0}})$, there exists $y_{\varepsilon}\in\partial{D}_{1}\cap\overline{\widetilde\Omega\setminus\Omega_{R_{0}/2}}$ such that $|x-y_{\varepsilon}|<C\varepsilon$ (note that $v_{1}(y_{\varepsilon})=1$). By \eqref{nabla_v_i_out}, then for some $\theta_{\varepsilon}\in(0,1)$
$$|v_{1}(x)-v_{1}^{*}(x)|=|v_{1}(x)-1|=|v_{1}(x)-v_{1}(y_{\varepsilon})|
\leq|\nabla{v}_{1}((1-\theta_{\varepsilon})x+\theta_{\varepsilon}{y}_{\varepsilon})||x-y_{\varepsilon}|\leq\,C\varepsilon.$$

(b) For $x\in\partial{D}_{1}\setminus{D}_{1}^{*}$, since $0<v_{1}<1$ in $\widetilde{\Omega}$ and $\Delta{v}_{1}=0$ in $\widetilde{\Omega}$, it follows from the boundary estimates of harmonic function that there exists $y_{x}\in\widetilde{\Omega}$, $|y_{x}-x|\leq\,C\varepsilon$ such that $v_{1}(y_{x})=v_{1}^{*}(x)$. Using \eqref{nabla_v_i_out} again, 
$$|v_{1}(x)-v_{1}^{*}(x)|=|v_{1}(x)-v_{1}(y_{x})|\leq\,\|\nabla{v}_{1}\|_{L^{\infty}(\widetilde{\Omega}\setminus\Omega_{R_{0}})}|x-y_{x}|\leq\,C\varepsilon.$$
Therefore,
\begin{equation}\label{onboundaryD1}
|v_{1}(x)-v_{1}^{*}(x)|\leq\,\frac{C\varepsilon}{\varepsilon^{1-2\beta}}=C\varepsilon^{2\beta},\quad\mbox{for}~x\in\partial(D_{1}\cup{D}_{1}^{*})\setminus\mathcal{C}_{\varepsilon^{1/2-\beta}}.
\end{equation}

Similarly, we have
\begin{equation}\label{onboundaryD2}
|v_{1}(x)-v_{1}^{*}(x)|\leq\,C\varepsilon^{2\beta},\quad\mbox{for}~x\in\partial(D_{2}\cup{D}_{2}^{*})\setminus\mathcal{C}_{\varepsilon^{1/2-\beta}}.
\end{equation}

{\bf STEP 2.} Now consider the line segments (or the cylindrical shaped surface in dimension $n=3$) between $\partial{D}_{1}^{*}$ and $\partial{D}_{2}^{*}$, $S_{1/2-\beta}:=\Big\{(x',x_{n})~\big|~|x'|=\varepsilon^{1/2-\beta}, ~h_{2}(x')\leq\,x_{n}\leq\,h_{1}(x')\Big\}$. By using Propostion \ref{prop_v1-u_bar} and the fact that $(v_{1}-\bar{u})=0$ on $\partial{D}_{2}$, we have, for $x\in\,S_{1/2-\beta}$, 
\begin{equation}\label{part1}
|(v_{1}-\bar{u})(x)|\leq\left\|\nabla(v_{1}-\bar{u})\right\|_{L^{\infty}(S_{\beta})}\left|\varepsilon+h_{1}(x')-h_{2}(x')\right|\leq\,C(\varepsilon+|x'|^{2})\leq\,C\varepsilon^{1-2\beta}.
\end{equation}
Similarly, we define $\bar{u}^{*}$, such that $\bar{u}^{*}=1$ on $\partial{D}_{1}^{*}\setminus\{0\}$, $\bar{u}^{*}=0$ on $\partial{D}_{2}^{*}\cup\partial\Omega$,
$$\bar{u}^{*}=\frac{x_{n}-h_{2}(x')}{h_{1}(x')-h_{2}(x')}\qquad\mbox{in}~~\Omega_{R_0}^{*}:=\Big\{(x',x_{n})~\big|~h_{2}(x')\leq\,x_{n}\leq\,h_{1}(x'),~|x'|\leq\,R_0~\Big\},$$
and $\|\bar{u}^{*}\|_{C^{k,1}(\widetilde{\Omega}^{*}\setminus\Omega_{R_0}^{*})}\leq\,C$.
It is easy to see that
$$\bar{u}^{*}=\lim_{\varepsilon\rightarrow0}\bar{u},\quad\mbox{in}~C^{k}(\Omega_{R_0}^{*}\setminus\{0\}),$$
and
\begin{equation}\label{nablau_starbar}
\left|\partial_{x'}\bar{u}^{*}(x)\right|\leq\frac{C}{|x'|},\qquad
\partial_{x_{n}}\bar{u}^{*}(x)=\frac{1}{h_{1}(x')-h_{2}(x')},\quad~x\in\Omega_{R_{0}}^{*}\setminus\{0\}.
\end{equation}
By the proof of Proposition \ref{prop_v1-u_bar}, we also have
\begin{equation}\label{nablaw_0star}
\left\|\nabla(v_{1}^{*}-\bar{u}^{*})\right\|_{L^{\infty}(\widetilde{\Omega}^{*})}\leq\,C.
\end{equation}
Therefore, using $(v_{1}^{*}-\bar{u}^{*})=0$ on $\partial{D}_{2}^{*}$, we have, for $x\in\,S_{1/2-\beta}$, 
\begin{equation}\label{part2}
|(v_{1}^{*}-\bar{u}^{*})(x)|\leq\left\|\nabla(v_{1}^{*}-\bar{u}^{*})\right\|_{L^{\infty}(S_{1/2-\beta})}\left|h_{1}(x')-h_{2}(x')\right|\leq\,C|x'|^{2}\leq\,C\varepsilon^{1-2\beta}.
\end{equation}
Finally, by the definitions of $\bar{u}$ and $\bar{u}^{*}$, for $x\in\,S_{1/2-\beta}$, 
\begin{align}\label{part3}
&|(\bar{u}-\bar{u}^{*})(x)|\nonumber\\
&\leq\left|\bar{u}(x',h_{2}(x'))-\bar{u}(x',-\frac{\varepsilon}{2}+h_{2}(x'))\right|+\left\|\partial_{x_{n}}(\bar{u}-\bar{u}^{*})\right\|_{L^{\infty}(S_{1/2-\beta})}\left|h_{1}(x')-h_{2}(x')\right|\nonumber\\
&\leq\,\left|\partial_{x_{n}}\bar{u}(x',-\frac{\theta_{\varepsilon}\varepsilon}{2}+h_{2}(x'))\right|\cdot\frac{\varepsilon}{2}+C\left(\frac{1}{h_{1}(x')-h_{2}(x')}-\frac{1}{\varepsilon+h_{1}(x')-h_{2}(x')}\right)|x'|^{2}\nonumber\\
&\leq\frac{C\varepsilon}{\varepsilon+|x'|^{2}}+\frac{C\varepsilon}{|x'|^{2}(\varepsilon+|x'|^{2})}|x'|^{2} \leq\,C\varepsilon^{2\beta}.
\end{align}
Taking $\beta=1/4$, by \eqref{part1}, \eqref{part2} and \eqref{part3}, we have, for $x\in\,S_{1/4}$, 
\begin{equation*}%\label{onboundaryS}
|(v_{1}-v_{1}^{*})(x)|\leq|(v_{1}-\bar{u})(x)|+|(\bar{u}-\bar{u}^{*})(x)|+|(\bar{u}^{*}-v_{1}^{*})(x)|\leq\,C\varepsilon^{1/2}.
\end{equation*}

Combining with \eqref{onboundaryD1}, \eqref{onboundaryD2} for $\beta=1/4$, recalling $v_{1}-v_{1}^{*}\equiv0$ on $\partial\Omega$, and using maximum principle, we obtain
\eqref{v1-v1*}.
\end{proof}

Outside of $\Omega_{R_{0}}$, we have the following improvement of Lemma \ref{lem_29}. 
\begin{lemma}\label{lem_2.10}
Let $v_{i}$ and $v_{i}^{*}$ be defined as \eqref{equv1} and \eqref{equv1*}, respectively. Then
\begin{equation}\label{v1-v1*2}
\|v_{i}-v_{i}^{*}\|_{L^{\infty}\Big(\Omega\setminus{\big(D_{1}\cup{D}_{1}^{*}\cup{D}_{2}\cup{D}_{2}^{*}\cup\Omega_{R_{0}}\big)}\Big)}\leq\,\begin{cases}
C\varepsilon^{3/4}, &\mbox{if}~n=2;\\
C\varepsilon|\log\varepsilon|, &\mbox{if}~n=3,
\end{cases}\qquad\,i=1,2.
\end{equation}
\end{lemma}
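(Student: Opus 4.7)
The approach is to view $w:=v_i-v_i^*$ as a harmonic function on the common outer domain $\Omega_0:=\Omega\setminus\overline{(D_1\cup D_1^*)\cup(D_2\cup D_2^*)}$ and apply the maximum principle, in the form of the Poisson integral representation, with refined pointwise bounds for $w$ on $\partial\Omega_0$. The key geometric remark is that near the origin $D_j\subset D_j^*$ (as $D_j=D_j^*\pm\tfrac{\varepsilon}{2}e_n$ is shifted into the inclusion), so $\partial(D_j\cup D_j^*)\cap\Omega_{R_0}$ coincides with the smooth surface $\partial D_j^*\cap\Omega_{R_0}$.

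First I would establish sharp boundary bounds for $w$: $w\equiv 0$ on $\partial\Omega$; on $\partial(D_j\cup D_j^*)\setminus\Omega_{R_0}$, both $v_i$ and $v_i^*$ are $C^{1,\alpha}$ with uniformly bounded gradients by Proposition~\ref{prop_v1-u_bar} (and its analog~\eqref{nablaw_0star} for $v_i^*$), and since $\partial D_j$ and $\partial D_j^*$ differ by an $\varepsilon/2$-translation there, $|w|\leq C\varepsilon$; on $\partial D_j^*\cap\Omega_{R_0}$, for $y=(y',h_1(y'))$ with $|y'|=r$ (taking $j=i=1$; the other cases are analogous), $v_1^*(y)=1$ and by integrating $|\nabla(v_1-\bar u)|\leq C$ along the vertical segment of length $\varepsilon/2$ from $\partial D_1$ (where $v_1-\bar u$ vanishes) down to $y$, $|v_1(y)-\bar u(y)|\leq C\varepsilon$. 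Combined with the exact formula $\bar u(y)-1=-\varepsilon/[2(\varepsilon+h_1(y')-h_2(y'))]$ and the lower bound $h_1-h_2\geq (\kappa/2)|y'|^2$ from \eqref{h1h21}, this yields $|w(y)|\leq C\min(1,\varepsilon/r^2)$.

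Next, for $x\in\Omega\setminus(D_1\cup D_1^*\cup D_2\cup D_2^*\cup\Omega_{R_0})$, which stays at a positive $\varepsilon$-independent distance from the origin, the Poisson integral representation in $\Omega_0$ gives
\[
|w(x)|\leq C\varepsilon+C\sum_{j=1,2}\int_{\partial D_j^*\cap\Omega_{R_0}}\min\!\left(1,\frac{\varepsilon}{|y'|^2}\right)K_{\Omega_0}(x,y)\,dS(y),
\]
where $K_{\Omega_0}(x,\cdot)$ is uniformly bounded for such $x$. Using $dS\sim|y'|^{n-2}\,d|y'|$ on $\partial D_j^*$ near the origin,
\[
\int_0^{R_0}\min\!\left(1,\frac{\varepsilon}{r^2}\right)r^{n-2}\,dr=
\begin{cases}O(\sqrt\varepsilon),&n=2,\\ O(\varepsilon|\log\varepsilon|),&n=3,\end{cases}
\]
which gives the claimed bound in $n=3$ directly.

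The main obstacle is the $n=2$ case, where the preceding integration yields only $O(\sqrt\varepsilon)$ while the claim is $O(\varepsilon^{3/4})$. I expect the missing factor $\varepsilon^{1/4}$ to come from the cusp geometry of $\partial\Omega_0$ at the origin, where $\partial D_1^*$ and $\partial D_2^*$ meet tangentially: this forces additional decay of $K_{\Omega_0}(x,y)$ as $y'\to 0$, and inserting this decay into the boundary integral with the natural cutoff $|y'|=\sqrt\varepsilon$ should deliver the sharper exponent. Carrying out this refined Poisson-kernel estimate near the cusp is the main technical difficulty of the two-dimensional argument.
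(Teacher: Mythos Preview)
Your overall strategy---harmonic function on the common exterior $\Omega_0$, pointwise boundary control $|w(y)|\le C\min(1,\varepsilon/|y'|^2)$ on $\partial D_j^*$, then the maximum principle---is exactly the skeleton the paper uses, and your $n=3$ computation is correct. The difference lies entirely in how the two-dimensional gap is closed.

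The paper does \emph{not} exploit any decay of the Poisson kernel of the cusped domain $\Omega_0$. Instead it bootstraps from the preliminary Lemma~\ref{lem_29}, which already gives $|v_i-v_i^*|\le C\varepsilon^{1/2}$ down to scale $|x'|=\varepsilon^{1/4}$. One then cuts $\Omega_0$ along $\partial B_{2\varepsilon^{1/4}}$ and applies the maximum principle on $\Omega_0\setminus B_{2\varepsilon^{1/4}}$ with barriers built on \emph{smooth} auxiliary domains (so no cusp analysis is needed): (i) for $\varepsilon^{1/4}\le|y'|\le R_0$, dyadic comparison functions $\xi_j^k$ harmonic in $\mathbb{R}^n\setminus D_j^*$ sum the contributions $\varepsilon\cdot 2^{2k}\cdot|E_j^k|$ to $C\varepsilon^{3/4}$; (ii) on the cut $\widetilde\Omega^*\cap\partial B_{2\varepsilon^{1/4}}$, the datum $C\varepsilon^{1/2}$ is carried by a barrier $\xi_0$ harmonic in $\mathbb{R}^n\setminus B_{2\varepsilon^{1/4}}$ that equals $1$ only on the thin strip of relative width $\sim\varepsilon^{1/4}$ occupied by the neck, yielding $\varepsilon^{1/2}\cdot\varepsilon^{1/4}=\varepsilon^{3/4}$. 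Thus the missing factor $\varepsilon^{1/4}$ comes from Lemma~\ref{lem_29} together with the neck-width at the cutoff scale, not from the cusped Poisson kernel.

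Your proposed route---prove that $K_{\Omega_0}(x,y)$ itself decays as $y'\to 0$ because of the tangential contact---is a genuinely different idea. In the model of two tangent disks it is actually true (the inversion $z\mapsto 1/z$ sends $\Omega_0$ to a strip, where the Poisson kernel decays like $e^{-c/|y'|}$, and after the Jacobian one gets $K_{\Omega_0}(x,y)\lesssim |y'|^{-2}e^{-c/|y'|}$), which would in fact beat $\varepsilon^{3/4}$. But turning this into a proof for general $C^{k,1}$ inclusions requires a quantitative comparison of the cusped domain with the model, which you have not carried out and which is considerably more technical than the paper's elementary two-step barrier argument. So your proposal is plausible and potentially sharper, but as written it leaves the $n=2$ case incomplete; the paper's approach trades sharpness for a self-contained argument that avoids any fine analysis of the cusp.
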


\begin{proof}

Let $k_{1}$ be $2^{-k_{1}-1}\leq\varepsilon^{1/4}\leq 2^{-k_{1}}$, and $k_{0}$ be $2^{-k_{0}-1}\leq R_{0}/2\leq2^{-k_{0}}$, since $R_{0}<1$. Since for sufficiently small $\varepsilon$, $\partial(D_{i}\cup{D}_{i}^{*})\cap\mathcal{C}_{R_{0}/2}=\partial{D}_{i}^{*}\cap\mathcal{C}_{R_{0}/2}$, we denote
$$E^{k}_{i}:=\left(\mathcal{C}_{2^{-k}}\setminus\mathcal{C}_{2^{-k-1}}\right)\cap\partial{D}_{i}^{*},\quad\mbox{for}~~k_{0}\leq k\leq k_{1},\quad\,i=1,2.$$
Then
$$\cup_{k=k_{0}}^{k_{1}}E^{k}_{i}=\left(\mathcal{C}_{R_{0}/2}\setminus\mathcal{C}_{\varepsilon^{1/4}}\right)\cap\partial{D}_{i}^{*},\quad\,i=1,2.$$

It follows from \eqref{onboundaryD1} that  
\begin{equation}\label{v1-v1*_Ek}
|v_{1}(x)-v_{1}^{*}(x)|\leq\,C\varepsilon\cdot2^{2k},\quad\,x\in\,E^{k}_{i},\quad\mbox{for}~~k_{0}\leq k\leq k_{1},\quad\,i=1,2.
\end{equation}
For each $E^{k}_{i}$, we will construct a positive harmonic function $\xi^{k}_{i}$ as below. We will use 
$$\xi_{i}:=\sum_{k=k_{0}}^{k_{1}}\xi^{k}_{i}$$
as one of the few harmonic functions to bound $\pm(v_{1}-v_{1}^{*})$ on $\partial(D_{i}\cup{D}_{i}^{*})$ from above in the following. Let $\tilde{\xi}^{k}_{i} $ be the solution of
$$
\begin{cases}
\Delta\tilde{\xi}^{k}_{i} =0,~\quad\mbox{in}~\mathbb{R}^{n}\setminus\,D_{i}^{*},\\
\tilde{\xi}^{k}_{i} =1,~\mbox{on}~E^{k}_{i},\quad\tilde{\xi}^{k} _{i} =0,~\mbox{on}~\partial{D}_{i}^{*}\setminus\,E^{k} _{i},\\
\tilde{\xi}^{k}_{i} \in L^\infty(\mathbb{R}^n \setminus D_{i}^*), \quad ~\mbox{if}~ n =2,\\
\tilde{\xi}^{k}_{i} \to 0 ~~\mbox{as}~~ |x| \to \infty,  \quad ~\mbox{if}~ n =3.
\end{cases}
$$
By the representation formula for the solution of the above boundary value problem using Green's function, we have
$$\tilde{\xi}^{k}_{i}(x)=\int_{E^{k}_{i}}\frac{\partial G_{i}}{\partial\nu^{-}}(x,y)dS(y),$$
 where $G_{i}(x,y)$ is the Green's function for the domain $\mathbb{R}^{n}\setminus{D}_{i}$ which satisfies
\begin{equation}\label{Green_bound}
|\nabla_{y} G_{i}(x,y)|\leq\,C\,\quad\mbox{if}~~y\in E^{k}_{i}\quad\mbox{and}~~x\in\widetilde{\Omega}^{*}\cap\partial\mathcal{C}_{R_{0}},\quad~\forall~k_{0}\leq\,k\leq\,k_{1}.
\end{equation}
In view of \eqref{v1-v1*_Ek}, we take
$$\xi^{k}_{i}:=C\varepsilon\cdot2^{2k}\tilde{\xi}^{k}_{i}(x)$$
and use
$$\xi_{i}(x)=\sum_{k=k_{0}}^{k_{1}}\xi^{k}_{i},\quad i=1,2,$$
to bound $\pm(v_{1}-v_{1}^{*})$ on $\left(\mathcal{C}_{R_{0}/2}\setminus\mathcal{C}_{\varepsilon^{1/4}}\right)\cap\partial{D}_{i}^{*}$ from above.

Let ${B}_{r}$ denote the ball of radius $r$ centered at the origin in $\mathbb{R}^{n}$. Now due to \eqref{v1-v1*}, $|(v_1- v^*_1)(x)| \le C \varepsilon^{1/2}$ at $\widetilde{\Omega}^{*} \cap \partial{B}_{2\varepsilon^{1/4}}$, we will construct another auxiliary function to bound $\pm(v_{1}-v_{1}^{*})$ on $\widetilde{\Omega}^{*} \cap \partial{B}_{2\varepsilon^{1/4}}$ from above. Define $$\Sigma_{\delta}:=\left\{(x_{1},x_{2},\cdots,x_{n}) \in \partial{B}_{1} ~\big| |x_{n}|<\delta\right\}.$$
Let $u_\delta$ be the solution of
$$
\begin{cases}
\Delta{u_\delta}=0,&\mbox{in}~~B_{1}\subset\mathbb{R}^{n},\\
u_\delta(x)=1 ,&\mbox{on}~~\Sigma_{C_0 \delta},\\
u_\delta(x)=0 ,&\mbox{on}~~\partial{B}_{1}\setminus \Sigma_{C_0 \delta},
\end{cases}
$$
where $C_0$ is a constant such that $\sum_{i=1}^n \lambda_i \le C_0$. From the Green's representation, we have
$$u_\delta(x)=\frac{1-|x|^{2}}{n\alpha(n)}\int_{\Sigma_{C_0 \delta}}\frac{1}{|x-y|^{n-2}}dS_{y},$$
where $\alpha(n)=|B_{1}|$. Then for $|x|\leq\frac{3}{4}$,
$$0<u_\delta(x)\leq\,C\int_{\partial{B}_{1}\cap\{|x_{n}|<C_0\delta\}}dS_{y}\leq\,C\delta.$$
By the Kelvin transformation, let
$$\tilde{u}_\delta(x):=\frac{1}{|x|^{n-2}}u_\delta\left(\frac{x}{|x|^{2}}\right),\quad\mbox{for}~~|x|>1,$$
then $\tilde{u}_\delta(x) = 1$ on $\Sigma_{C_0 \delta}$,
$$\Delta \tilde{u}_\delta(x)=0,\quad \tilde{u}_\delta(x)>0,\quad\mbox{for}~~|x|>1,$$
and as $|x|\rightarrow\infty$, $\tilde{u}_\delta(x)\rightarrow\,u_\delta(0)=\frac{|\Sigma_{C_0\delta}|}{|\partial {B}_{1}|}$  if $n=2$; $\tilde{u}_\delta(x)\rightarrow0$ for $n\geq3$. Furthermore, for $|x|\geq\frac{4}{3}$, we have
\begin{equation}\label{udeltadecay}
\tilde{u}_\delta(x)\leq\frac{C\delta}{|x|^{n-2}}.
\end{equation}
Take
$$\xi_0: = \bar{C}\varepsilon^{1/2}\tilde{u}_\delta(x)\left(\frac{x}{\delta} \right)$$
with $\delta = 2 \varepsilon^{1/4}$, where $\bar{C}$ is the same constant $C$ in \eqref{v1-v1*}. Because of the choice of $C_0$ and \eqref{v1-v1*}, we can see
$$\xi_0 = \bar{C} \varepsilon^{1/2} \geq\, \pm(v_{1}-v_{1}^{*}) \quad \mbox{on}~~\widetilde{\Omega}^{*} \cap \partial{B}_{2\varepsilon^{1/4}}.$$
And according to \eqref{udeltadecay}
$$ \xi_0 \le C \varepsilon^{\frac{n+1}{4}} \quad  \mbox{on}~~\widetilde{\Omega}^{*} \cap \partial C_{R_0}.$$

Due to $\|\nabla{v}_{1}\|_{L^{\infty}(\widetilde{\Omega}\setminus\Omega_{R/2})}\leq\,C$ and $\|\nabla{v}_{1}^{*}\|_{L^{\infty}(\widetilde{\Omega}^{*}\setminus\Omega_{R/2}^{*})}\leq\,C$, we have 
$$\pm(v_{1}-v_{1}^{*})\leq\,C\varepsilon,\qquad\mbox{on}~~\partial(D_{1}\cup{D}_{1}^{*}\cup{D}_{2}\cup{D}_{2}^{*})\setminus\mathcal{C}_{R_{0}/2}.$$
In view of $v_{1}-v_{1}^{*}=0$ on $\partial\Omega$ and the positivity of $\xi_{i}$, $i=0,1,2$, we have 
$$\pm(v_{1}-v_{1}^{*})\leq\xi_{0}+\xi_{1}+\xi_{2}+C\varepsilon,\quad\mbox{on}~~\partial\Big(\Omega\setminus{\big(D_{1}\cup{D}_{1}^{*}\cup{D}_{2}\cup{D}_{2}^{*}\cup B_{2\varepsilon^{1/4}}\big)}\Big).
$$
By using the maximum principle in $\Omega\setminus{\big(D_{1}\cup{D}_{1}^{*}\cup{D}_{2}\cup{D}_{2}^{*}\cup B_{2\varepsilon^{1/4}}\big)}$, we have
\begin{equation}\label{max_principle}
\pm(v_{1}-v_{1}^{*})\leq\xi_{0}+\xi_{1}+\xi_{2}+C\varepsilon,\quad\mbox{in}~~\Omega\setminus{\big(D_{1}\cup{D}_{1}^{*}\cup{D}_{2}\cup{D}_{2}^{*}\cup B_{2\varepsilon^{1/4}}\big)}.
\end{equation}

Next, in order to prove \eqref{v1-v1*2}, we need to further estimate $\xi_{i}$ on $\widetilde{\Omega}^{*}\cap\partial\mathcal{C}_{R_{0}}$, $i=1,2$. Making use of \eqref{Green_bound}, 
\begin{equation*}%\label{xi_k^i}
\tilde{\xi}^{k}_{i}(x)\leq\,C|E^{k}_{i}|\leq\frac{C}{2^{(n-1)k}},\qquad~x\in\widetilde{\Omega}^{*}\cap\partial\mathcal{C}_{R_{0}}.
\end{equation*}
Thus
$$\xi_{i}(x)\leq\,C\sum_{k=k_{0}}^{k_{1}}\varepsilon\cdot2^{2k}\frac{C}{2^{(n-1)k}}=C\sum_{k=k_{0}}^{k_{1}}\frac{\varepsilon}{2^{(n-3)k}},\qquad~x\in\widetilde{\Omega}^{*}\cap\partial\mathcal{C}_{R_{0}}.$$
Hence, if $n=2$, recalling $k_{1}\sim\frac{1}{4\log 2}|\log\varepsilon|$, 

$$\xi_{i}(x)\leq\,C\sum_{k=k_{0}}^{k_{1}}\varepsilon2^{k}\leq\,C\varepsilon2^{k_{1}}\leq\,C\varepsilon^{3/4},\quad\,x\in\widetilde{\Omega}^{*}\cap\partial\mathcal{C}_{R_{0}};$$

if $n=3$, 

$$\xi_{i}(x)\leq\,C\varepsilon k_{1}\leq\,C\varepsilon|\log\varepsilon|,\quad\,x\in\widetilde{\Omega}^{*}\cap\partial\mathcal{C}_{R_{0}}.$$

Combining these estimates above with \eqref{max_principle}, we have, on $\partial\Big(\Omega\setminus{\big(D_{1}\cup{D}_{1}^{*}\cup{D}_{2}\cup{D}_{2}^{*}\cup\Omega_{R_{0}}\big)}\Big)$, 
$$\pm(v_{1}-v_{1}^{*})\leq\xi_{0}+\xi_{1}+\xi_{2}+C\varepsilon\leq\,\begin{cases}
C\varepsilon^{3/4} +C\varepsilon^{3/4}+C\varepsilon, &\mbox{if}~n=2;\\
C\varepsilon+C\varepsilon|\log\varepsilon|+C\varepsilon, &\mbox{if}~n=3.
\end{cases}$$
By using the maximum principle again, 
$$|v_{1}-v_{1}^{*}|\leq\,\begin{cases}
C\varepsilon^{3/4}, &\mbox{if}~n=2;\\
C\varepsilon|\log\varepsilon|, &\mbox{if}~n=3,
\end{cases}\quad\mbox{in}~~\Omega\setminus{\big(D_{1}\cup{D}_{1}^{*}\cup{D}_{2}\cup{D}_{2}^{*}\cup\Omega_{R_{0}}\big)}.$$
The proof is completed.
\end{proof}

An immediate consequence of Lemma \ref{lem_2.10} and the boundary estimates for elliptic equations is as follows:

\begin{lemma}\label{lem_beta_difference}
Let $v_{i}$ and $v_{i}^{*}$ be defined as \eqref{equv1} and \eqref{equv1*}, respectively. Then
\begin{equation}\label{b1+b2}
\left|\int_{\partial{\Omega}}\frac{\partial{v}_{i}}{\partial\nu}
-\int_{\partial{\Omega}}\frac{\partial{v}_{i}^{*}}{\partial\nu}\right|\leq\,\begin{cases}
C\varepsilon^{3/4}, &\mbox{if}~n=2;\\
C\varepsilon|\log\varepsilon|, &\mbox{if}~n=3,
\end{cases}\quad\,i=1,2.
\end{equation}
\end{lemma}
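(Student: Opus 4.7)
Let $w := v_i - v_i^*$, and let $U$ be a fixed collar neighborhood of $\partial\Omega$ in $\Omega$, chosen small enough so that $U$ stays uniformly (in $\varepsilon$) away from $\overline{D_1 \cup D_1^* \cup D_2 \cup D_2^*}$; this is possible since $D_i = D_i^* + O(\varepsilon)$ and all four inclusions have closure in $\Omega$. In particular $U \subset \Omega \setminus (D_1 \cup D_1^* \cup D_2 \cup D_2^* \cup \Omega_{R_0})$ for small $\varepsilon$. On such a collar both $v_i$ and $v_i^*$ are harmonic, so $\Delta w = 0$ in $U$, and $w \equiv 0$ on $\partial\Omega$ by the definitions in \eqref{equv1} and \eqref{equv1*}.

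The plan is simply to turn the $L^\infty$ bound of Lemma \ref{lem_2.10} into a gradient bound on $\partial\Omega$ via standard boundary regularity. Because $\partial\Omega$ is $C^2$ (indeed $C^{k,1}$) and $w$ is a harmonic function in $U$ vanishing on $\partial\Omega$, the $W^{2,p}$ boundary estimates of Agmon--Douglis--Nirenberg (or the Schauder estimates, cf.\ \cite{gt}) and the Sobolev embedding give
\[
\|\nabla w\|_{L^\infty(\partial\Omega)} \leq C \|w\|_{L^\infty(U)},
\]
where $C$ depends only on $\partial\Omega$ and $U$, both of which are $\varepsilon$-independent. Since $U \subset \Omega \setminus (D_1 \cup D_1^* \cup D_2 \cup D_2^* \cup \Omega_{R_0})$, Lemma \ref{lem_2.10} furnishes
\[
\|w\|_{L^\infty(U)} \leq
\begin{cases}
C \varepsilon^{3/4}, & n = 2,\\
C \varepsilon |\log\varepsilon|, & n = 3.
\end{cases}
\]

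Combining these two displays and integrating over $\partial\Omega$, which has finite $(n-1)$-dimensional measure, we obtain
\[
\left|\int_{\partial\Omega} \frac{\partial v_i}{\partial\nu} - \int_{\partial\Omega} \frac{\partial v_i^*}{\partial\nu}\right|
\leq |\partial\Omega|\,\|\nabla w\|_{L^\infty(\partial\Omega)}
\leq
\begin{cases}
C \varepsilon^{3/4}, & n = 2,\\
C \varepsilon |\log\varepsilon|, & n = 3,
\end{cases}
\]
which is the desired inequality.

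There is no real obstacle here; the only point requiring some care is verifying that the collar $U$ where $w$ is harmonic has a size independent of $\varepsilon$ and stays away from the touching region $\Omega_{R_0}$, so that the constants in the elliptic boundary estimate and the appeal to Lemma \ref{lem_2.10} are both $\varepsilon$-uniform. Both follow from the fact that the four inclusions have compact closure inside $\Omega$ and $D_i$ differs from $D_i^*$ only by an $\varepsilon$-translation.
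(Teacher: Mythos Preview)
Your proof is correct and is precisely the argument the paper has in mind: the paper states the lemma as ``an immediate consequence of Lemma \ref{lem_2.10} and the boundary estimates for elliptic equations,'' and you have simply spelled out those details (harmonicity of $w$ in an $\varepsilon$-independent collar of $\partial\Omega$, $w=0$ on $\partial\Omega$, boundary gradient estimate, then integrate). There is no difference in approach.
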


Now we prove Lemma \ref{lem_25}.

\begin{proof}[Proof of Lemma \ref{lem_25}]
Since
$$\int_{\partial{D}_{1}}\frac{\partial{v}_{1}}{\partial\nu^{-}}
=\int_{\widetilde{\Omega}}|\nabla{v}_{1}|^{2},$$
it follows from Theorem \ref{thm_energy} that
\begin{equation*}
\rho_{n}(\varepsilon)\int_{\widetilde{\Omega}}|\nabla{v}_{i}|^{2}
-\kappa_n=M_{i}\rho_{n}(\varepsilon)+E_n(\varepsilon)\rho_{n}(\varepsilon),\quad\,i=1,2.
\end{equation*}
In view of the definitions of $v_{1}$ and $v_{2}$ and the Green's formula, we obtain the following identity
$$\int_{\partial{D}_{1}}\frac{\partial{v}_{2}}{\partial\nu^{-}}=
\int_{\partial{D}_{2}}\frac{\partial{v}_{1}}{\partial\nu^{-}}=
-\int_{\partial{D}_{1}}\frac{\partial{v}_{1}}{\partial\nu^{-}}-
\int_{\partial{\Omega}}\frac{\partial{v}_{1}}{\partial\nu}.$$
Thus, recalling the definition of $\Theta_{\varepsilon}$, \eqref{def_beta},
\begin{align*}
\Theta_{\varepsilon}&=-\left(\rho_{n}(\varepsilon)\int_{\partial{D}_{1}}\frac{\partial{v}_{1}}{\partial\nu^{-}}\right)\int_{\partial{\Omega}}\frac{\partial{v}_{2}}{\partial\nu}
+\left(\rho_{n}(\varepsilon)\int_{\partial{D}_{1}}\frac{\partial{v}_{2}}{\partial\nu^{-}}\right)\int_{\partial{\Omega}}\frac{\partial{v}_{1}}{\partial\nu}\\
&=-\left(\rho_{n}(\varepsilon)\int_{\partial{D}_{1}}\frac{\partial{v}_{1}}{\partial\nu^{-}}\right)\int_{\partial{\Omega}}\frac{\partial({v}_{1}+{v}_{2})}{\partial\nu}-\rho_{n}(\varepsilon)\left(\int_{\partial{\Omega}}\frac{\partial{v}_{1}}{\partial\nu}\right)^{2}.
\end{align*}
Recalling the definition of $\Theta$, \eqref{def_beta*},
$$\Theta=-\kappa_n\int_{\partial{\Omega}}\frac{\partial({v}_{1}^{*}+ {v}_{2}^{*})}{\partial\nu}$$
and using Lemma \ref{lem_beta_difference} and Theorem \ref{thm_energy}, we have,
\begin{align*}
\Theta-\Theta_{\varepsilon}=&
\left(\rho_{n}(\varepsilon)\int_{\partial{D}_{1}}\frac{\partial{v}_{1}}{\partial\nu^{-}}-\kappa_n\right)\int_{\partial{\Omega}}\frac{\partial({v}_{1}^{*}+{v}_{2}^{*})}{\partial\nu}
+\rho_{n}(\varepsilon)\left(\int_{\partial{\Omega}}\frac{\partial{v}_{1}^{*}}{\partial\nu}\right)^{2}\\
&\hspace{2cm}+O(\varepsilon^{\frac{3}{4}})(\mbox{or}~O(\varepsilon|\log\varepsilon|))\\
=&\left(M_{1}\rho_{n}(\varepsilon)+E_n(\varepsilon)\rho_{n}(\varepsilon)\right)\int_{\partial{\Omega}}\frac{\partial(v_{1}^{*}+{v}_{2}^{*})}{\partial\nu}+\rho_{n}(\varepsilon)\left(\int_{\partial{\Omega}}\frac{\partial{v}_{1}^{*}}{\partial\nu}\right)^{2}\\
&\hspace{2cm}+O(\varepsilon^{\frac{3}{4}})(\mbox{or}~O(\varepsilon|\log\varepsilon|))\\
=&\rho_{n}(\varepsilon)\left(M_{1}\int_{\partial{\Omega}}\frac{\partial(v_{1}^{*}+{v}_{2}^{*})}{\partial\nu}+
\Big(\int_{\partial{\Omega}}\frac{\partial{v}_{1}^{*}}{\partial\nu}\Big)^{2}\right)+E_n(\varepsilon)\rho_{n}(\varepsilon).
\end{align*}
\eqref{beta_difference} is proved.
\end{proof}

\subsection{Proof of Lemma \ref{lem_26}}

To prove \eqref{Q_difference}, besides \eqref{b1+b2}, we need

\begin{lemma}\label{lem_beta_difference2}
Let $v_{0}$ and $v_{0}^{*}$ be defined in \eqref{equv1} and \eqref{equv1*}, respectively. Then
\begin{equation}\label{bi}
\left|\int_{\partial{D}_{i}}\frac{\partial{v}_{0}}{\partial\nu^{-}}
-\int_{\partial{D}_{i}^{*}}\frac{\partial{v}_{0}^{*}}{\partial\nu^{-}}\right|\leq\,C\|\varphi\|_{L^{\infty}(\partial\Omega)}\begin{cases}
\varepsilon^{3/4}, &\mbox{if}~n=2,\\
\varepsilon|\log\varepsilon|, &\mbox{if}~n=3,
\end{cases}\quad\,i=1,2.
\end{equation}
\end{lemma}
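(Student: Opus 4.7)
The plan is to reduce \eqref{bi} to a pointwise estimate for $\partial_{\nu}(v_i - v_i^{*})$ on $\partial\Omega$, following the same route by which Lemma~\ref{lem_beta_difference} is obtained from Lemma~\ref{lem_2.10}. First, I would apply the Green identity $\int_{\widetilde{\Omega}}(v_i \Delta v_0 - v_0 \Delta v_i) = 0$ together with the boundary data $v_0 = \varphi$ on $\partial\Omega$, $v_0 = 0$ on $\partial D_1 \cup \partial D_2$, $v_i = \delta_{ij}$ on $\partial D_j$, $v_i = 0$ on $\partial\Omega$. Since the outward normal to $\widetilde{\Omega}$ on $\partial D_j$ equals $-\nu_{D_j}$, the convention $\frac{\partial\phi}{\partial\nu^{-}} = -\nabla\phi\cdot\nu_{D_j}$ makes the surface integral collapse to
\begin{equation*}
\int_{\partial D_i} \frac{\partial v_0}{\partial \nu^{-}} = \int_{\partial\Omega} \varphi\,\frac{\partial v_i}{\partial\nu}.
\end{equation*}
The identical computation on $\widetilde{\Omega}^{*}$ gives $\int_{\partial D_i^{*}} \partial_{\nu^{-}} v_0^{*} = \int_{\partial\Omega}\varphi\,\partial_{\nu} v_i^{*}$, so subtracting yields
\begin{equation*}
\int_{\partial D_i} \frac{\partial v_0}{\partial \nu^{-}} - \int_{\partial D_i^{*}} \frac{\partial v_0^{*}}{\partial \nu^{-}} = \int_{\partial\Omega} \varphi\left(\frac{\partial v_i}{\partial\nu} - \frac{\partial v_i^{*}}{\partial\nu}\right).
\end{equation*}

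Second, I would control the right-hand side via a boundary gradient estimate on $v_i - v_i^{*}$. Because $\overline{D_1 \cup D_1^{*} \cup D_2 \cup D_2^{*}}$ lies in a compact subset of $\Omega$, a fixed inward neighborhood $\mathcal{N}$ of $\partial\Omega$ is contained in $\Omega \setminus (D_1 \cup D_1^{*} \cup D_2 \cup D_2^{*} \cup \Omega_{R_0})$, and on $\mathcal{N}$ both $v_i$ and $v_i^{*}$ are harmonic. Lemma~\ref{lem_2.10} then gives
\begin{equation*}
\|v_i - v_i^{*}\|_{L^{\infty}(\mathcal{N})} \le C\eta_n(\varepsilon),\qquad \eta_2(\varepsilon) := \varepsilon^{3/4},\quad \eta_3(\varepsilon) := \varepsilon|\log\varepsilon|.
\end{equation*}
Since $v_i - v_i^{*}$ vanishes on the $C^{k,1}$ surface $\partial\Omega$, standard boundary $C^{1}$-estimates for harmonic functions (flatten $\partial\Omega$ and apply interior estimates after odd reflection, or appeal directly to the Poisson kernel) upgrade this bound to
\begin{equation*}
\left\|\frac{\partial v_i}{\partial\nu} - \frac{\partial v_i^{*}}{\partial\nu}\right\|_{L^{\infty}(\partial\Omega)} \le C\eta_n(\varepsilon).
\end{equation*}

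Finally, feeding this into the identity from the first step and estimating the integral trivially by the $L^{\infty}$ norm yields
\begin{equation*}
\left|\int_{\partial D_i} \frac{\partial v_0}{\partial \nu^{-}} - \int_{\partial D_i^{*}} \frac{\partial v_0^{*}}{\partial \nu^{-}}\right| \le \|\varphi\|_{L^{\infty}(\partial\Omega)}\,|\partial\Omega|\cdot\left\|\frac{\partial v_i}{\partial\nu} - \frac{\partial v_i^{*}}{\partial\nu}\right\|_{L^{\infty}(\partial\Omega)} \le C\|\varphi\|_{L^{\infty}(\partial\Omega)}\,\eta_n(\varepsilon),
\end{equation*}
which is exactly \eqref{bi}. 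The only delicate point is the sign bookkeeping in the Green identity, which is routine once the outward normal convention from the paragraph after \eqref{equinfty} is fixed; all of the genuine work has already been carried out in Lemma~\ref{lem_2.10}, so no new obstacle appears here.
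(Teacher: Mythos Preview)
Your proof is correct and follows essentially the same approach as the paper: both use Green's formula to rewrite $\int_{\partial D_i}\partial_{\nu^-}v_0$ as $\int_{\partial\Omega}\varphi\,\partial_\nu v_i$ (and likewise for the starred quantities), then bound the difference via the pointwise estimate for $\partial_\nu(v_i-v_i^{*})$ on $\partial\Omega$ coming from Lemma~\ref{lem_2.10} and boundary elliptic estimates. The only cosmetic difference is that the paper packages the boundary gradient bound as Lemma~\ref{lem_beta_difference} and cites it, whereas you re-derive it inline.
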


\begin{proof}
Using the Green's formula,
$$\int_{\partial{D}_{1}}\frac{\partial{v}_{0}}{\partial\nu^{-}}=\int_{\partial{D}_{1}}\frac{\partial{v}_{0}}{\partial\nu^{-}}v_{1}= \int_{\partial\widetilde{\Omega}}\frac{\partial{v}_{0}}{\partial\nu}v_{1}=\int_{\partial\widetilde{\Omega}}\frac{\partial{v}_{1}}{\partial\nu}v_{0} =\int_{\partial{\Omega}}\frac{\partial{v}_{1}}{\partial\nu}\varphi,$$
and
$$\int_{\partial{D}_{1}^{*}}\frac{\partial{v}_{0}^{*}}{\partial\nu^{-}}=\int_{\partial{D}_{1}^{*}}\frac{\partial{v}_{0}^{*}}{\partial\nu^{-}}v_{1}^{*}=\int_{\partial{\Omega}}\frac{\partial{v}_{1}^{*}}{\partial\nu}\varphi.$$
So that, by Lemma \ref{lem_beta_difference},
$$\left|\int_{\partial{D}_{1}}\frac{\partial{v}_{0}}{\partial\nu^{-}}
-\int_{\partial{D}_{1}^{*}}\frac{\partial{v}_{0}^{*}}{\partial\nu^{-}}\right|=
\left|\int_{\partial{\Omega}}\frac{\partial({v}_{1}-{v}_{1}^{*})}{\partial\nu}\varphi\right|\leq\,\begin{cases}
C\varepsilon^{3/4}, &\mbox{if}~n=2,\\
C\varepsilon|\log\varepsilon|, &\mbox{if}~n=3.
\end{cases}$$
This completes the proof, with the assumption $\|\varphi\|_{C^{2}(\partial \Omega)}=1$.
\end{proof}

\begin{proof}[Proof of Lemma \ref{lem_26}]
Recall that
$$Q_{\varepsilon}[\varphi]=\int_{\partial{D}_{1}}\frac{\partial{v}_{0}}{\partial\nu^{-}}
\int_{\partial{\Omega}}\frac{\partial{v}_{2}}{\partial\nu}
-\int_{\partial{D}_{2}}\frac{\partial{v}_{0}}{\partial\nu^{-}}\int_{\partial{\Omega}}\frac{\partial{v}_{1}}{\partial\nu},
$$
and
$$Q[\varphi]=\int_{\partial{D}_{1}^{*}}\frac{\partial{v}_{0}^{*}}{\partial\nu^{-}}
\int_{\partial{\Omega}}\frac{\partial{v}_{2}^{*}}{\partial\nu}
-\int_{\partial{D}_{2}^{*}}\frac{\partial{v}_{0}^{*}}{\partial\nu^{-}}\int_{\partial{\Omega}}\frac{\partial{v}_{1}^{*}}{\partial\nu}.
$$
Using \eqref{b1+b2} and \eqref{bi}, we have
\begin{align*}
&|Q_{\varepsilon}[\varphi]-Q[\varphi]|\\
\leq&\left|\left(\int_{\partial{D}_{1}}\frac{\partial{v}_{0}}{\partial\nu^{-}}
-\int_{\partial{D}_{1}^{*}}\frac{\partial{v}_{0}^{*}}{\partial\nu^{-}}\right)
\int_{\partial{\Omega}}\frac{\partial{v}_{2}}{\partial\nu}\right|+
\left|\int_{\partial{D}_{1}^{*}}\frac{\partial{v}_{0}^{*}}{\partial\nu^{-}}\left(\int_{\partial{\Omega}}\frac{\partial{v}_{2}}{\partial\nu}
-\int_{\partial{\Omega}}\frac{\partial{v}_{2}^{*}}{\partial\nu}\right)\right|\\
&+\left|\left(\int_{\partial{D}_{2}}\frac{\partial{v}_{0}}{\partial\nu^{-}}
-\int_{\partial{D}_{2}^{*}}\frac{\partial{v}_{0}^{*}}{\partial\nu^{-}}\right)
\int_{\partial{\Omega}}\frac{\partial{v}_{1}}{\partial\nu}\right|+
\left|\int_{\partial{D}_{2}^{*}}\frac{\partial{v}_{0}^{*}}{\partial\nu^{-}}\left(\int_{\partial{\Omega}}\frac{\partial{v}_{1}}{\partial\nu}
-\int_{\partial{\Omega}}\frac{\partial{v}_{1}^{*}}{\partial\nu}\right)\right|\\
\leq&C\|\varphi\|_{L^{\infty}(\partial\Omega)}\begin{cases}
\varepsilon^{3/4}, &\mbox{if}~n=2,\\
\varepsilon|\log\varepsilon|, &\mbox{if}~n=3.
\end{cases}
\end{align*}
So \eqref{Q_difference} is proved.
\end{proof}

\section{Proof of Theorem \ref{thm_energy}}\label{sec_thm_energy}

Using Lemma \ref{lem_29}, we have
\begin{lemma}\label{lem_49}
Assume that $v_{1}$ and $v_{1}^{*}$ are solutions of \eqref{equv1} and \eqref{equv1*}, respectively. If $\partial{D}_{1}^{*}$ and $\partial{D}_{2}^{*}$ are of $C^{k,1}$, $k\geq 3$, then for $\varepsilon^{1/4}\leq|x'|\leq{R}_{0}$, we have
\begin{equation}\label{nabla_v1v1*_inunitsize}
|\nabla{v}_{1}(x)|\leq\,C|x'|^{-2},\quad\,x\in\Omega_{R_0}\setminus\Omega_{\varepsilon^{1/4}},
\quad|\nabla{v}^{*}_{1}(x)|\leq\,C|x'|^{-2},\quad\,x\in\Omega^{*}_{R_0}\setminus\Omega^{*}_{\varepsilon^{1/4}};
\end{equation}
and
\begin{equation}\label{nabla_v1-v1*_inunitsize}
|\nabla(v_{1}-v_{1}^{*})(x)|\leq\,C\varepsilon^{1/2(1-\frac{1}{k})}|x'|^{-2},\quad\mbox{in}~~\Omega^{*}_{R_0}\setminus\Omega^{*}_{\varepsilon^{1/4}}.
\end{equation}
\end{lemma}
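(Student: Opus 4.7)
The first two estimates follow immediately from Proposition \ref{prop_v1-u_bar} and its analog for $v_{1}^{*}$. For $x \in \Omega_{R_{0}} \setminus \Omega_{\varepsilon^{1/4}}$ one has $|x'|^{2} \geq \varepsilon^{1/2} \geq \varepsilon$ for small $\varepsilon$, hence $\varepsilon + |x'|^{2} \leq 2|x'|^{2}$, and \eqref{nabla_v_i0} gives $|\nabla v_{1}(x)| \leq C|x'|^{-2}$. The same argument applied to the pair $(v_{1}^{*}, \bar u^{*})$, already used in the paper to produce \eqref{nablaw_0star}, combined with \eqref{nablau_starbar} yields the analogous bound $|\nabla v_{1}^{*}(x)| \leq C|x'|^{-2}$ in $\Omega_{R_{0}}^{*} \setminus \Omega_{\varepsilon^{1/4}}^{*}$.

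For \eqref{nabla_v1-v1*_inunitsize}, my plan is to rescale at unit size, extract a uniform $C^{k,\alpha}$ Schauder bound, and combine it with the $L^{\infty}$ bound of Lemma \ref{lem_29} via interpolation. Fix $x_{0} = (z',\zeta) \in \Omega_{R_{0}}^{*} \setminus \Omega_{\varepsilon^{1/4}}^{*}$, set $r := |z'|$ and $\delta := h_{1}(z') - h_{2}(z') \sim r^{2}$ by \eqref{h1h21}. Rescale by $\delta$ about $x_{0}$: with $y := \delta^{-1}(x - x_{0})$, put $\hat v_{1}(y) := v_{1}(x_{0} + \delta y)$ and $\hat v_{1}^{*}(y) := v_{1}^{*}(x_{0} + \delta y)$, both harmonic on rescaled domains $\hat{\mathcal Q} \supset \hat{\mathcal Q}^{*}$ with piecewise constant Dirichlet data ($1$ on top, $0$ on bottom). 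A Taylor expansion of $h_{1}, h_{2}$ at $z'$, together with the $C^{k,1}$ regularity and the smallness $\delta \sim r^{2} \leq R_{0}^{2}$, shows that the rescaled top/bottom graphs have $C^{k,1}$ norms bounded uniformly in $\varepsilon$ and $z'$. Interior estimates together with standard boundary $C^{k,\alpha}$ Schauder estimates applied on $\hat{\mathcal Q}$ and $\hat{\mathcal Q}^{*}$ then yield
\[
\|\hat v_{1}\|_{C^{k,\alpha}(\hat{\mathcal Q}\cap\{|y|<1/2\})} + \|\hat v_{1}^{*}\|_{C^{k,\alpha}(\hat{\mathcal Q}^{*}\cap\{|y|<1/2\})} \leq C
\]
uniformly in $\varepsilon$, and in particular $\|\hat v_{1} - \hat v_{1}^{*}\|_{C^{k}(\hat{\mathcal Q}^{*}\cap\{|y|<1/2\})} \leq C$.

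On the other hand, Lemma \ref{lem_29} gives $\|v_{1} - v_{1}^{*}\|_{L^{\infty}} \leq C\varepsilon^{1/2}$ on the relevant region, so $\|\hat v_{1} - \hat v_{1}^{*}\|_{L^{\infty}} \leq C\varepsilon^{1/2}$. The standard interpolation inequality
\[
\|f\|_{C^{1}} \leq C\,\|f\|_{C^{0}}^{1 - 1/k}\,\|f\|_{C^{k}}^{1/k}
\]
applied on $\hat{\mathcal Q}^{*} \cap \{|y|<1/4\}$ then produces $\|\hat v_{1} - \hat v_{1}^{*}\|_{C^{1}} \leq C\,\varepsilon^{(1-1/k)/2}$. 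Evaluating at $y = 0$ (i.e. $x = x_{0}$) and unrescaling via $|\nabla(v_{1} - v_{1}^{*})(x_{0})| = \delta^{-1}|\nabla_{y}(\hat v_{1} - \hat v_{1}^{*})(0)|$ yields $|\nabla(v_{1} - v_{1}^{*})(x_{0})| \leq C\varepsilon^{(1-1/k)/2}/\delta \leq C\varepsilon^{(1-1/k)/2}|z'|^{-2}$, as required. The main technical obstacle is to justify that the boundary Schauder constants are independent of $\varepsilon$ and $z'$ across the full range $\varepsilon^{1/4} \leq |z'| \leq R_{0}$; this reduces to uniform control on the shape and $C^{k,1}$ boundary regularity of the rescaled cylinders, which holds because $\delta \sim r^{2}$ remains small compared to the curvature scale of $\partial D_{i}^{*}$.
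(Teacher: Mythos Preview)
Your proof is correct and follows essentially the same approach as the paper: rescale to unit size about a point with $|z'|\sim r$, obtain uniform $C^{k}$ bounds for the rescaled functions from $0<v_{1},v_{1}^{*}<1$ together with the $C^{k,1}$ boundary regularity, and then interpolate against the $L^{\infty}$ bound of Lemma~\ref{lem_29} to produce the $C^{1}$ estimate with exponent $\tfrac{1}{2}(1-\tfrac{1}{k})$. The only minor difference is that you derive \eqref{nabla_v1v1*_inunitsize} directly from Proposition~\ref{prop_v1-u_bar} (and its analog \eqref{nablaw_0star}, \eqref{nablau_starbar} for $v_{1}^{*}$), whereas the paper obtains those bounds as a byproduct of the same rescaling argument.
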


\begin{proof}
For $\varepsilon^{1/4}\leq|z'|\leq\,R_0$, use the change of variable as before
\begin{equation*}%\label{changeofvariable2}
 \left\{
  \begin{aligned}
  &x'-z'=|z'|^{2} y',\\
  &x_n=|z'|^{2} y_n,
  \end{aligned}
 \right.
\end{equation*}
to rescale $\Omega_{|z'|+ |z'|^{2}}\setminus\Omega_{|z'|}$ into a nearly unit-size square (or cylinder) $Q_{1}$, and $\Omega^{*}_{|z'|+|z'|^{2}}\setminus\Omega^{*}_{|z'|}$ into $Q_{1}^{*}$. Let
$$V_{1}(y)=v_{1}(z'+|z'|^{2}y',|z'|^{2}y_{n}),\quad\mbox{in}~~Q_{1},$$
and
$$V_{1}^{*}(y)=v_{1}^{*}(z'+|z'|^{2}y',|z'|^{2}y_{n}),\quad\mbox{in}~~Q_{1}^{*}.$$
Since $0<V_{1},V_{1}^{*}<1$, using the standard elliptic estimate, we have
\begin{equation*}%\label{resacaled_v1}
|\nabla^{k}V_{1}|\leq\,C(k),\quad\mbox{in}~~Q_{1},\quad
\mbox{and}~~
|\nabla^{k}V_{1}^{*}|\leq\,C(k),\quad\mbox{in}~~Q_{1}^{*}.
\end{equation*}

By using an interpolation with \eqref{v1-v1*}, we have
$$|\nabla(V_{1}-V_{1}^{*})|\leq\,C(k)\varepsilon^{1/2(1-\frac{1}{k})},\quad\mbox{in}~~Q_{1}^{*}.$$
Thus, back to $v_{1}-v_{1}^{*}$, we have
\begin{equation*}%\label{nabla_v1-v1*_inunitsize}
|\nabla(v_{1}-v_{1}^{*})(x)|\leq\,C\varepsilon^{1/2(1-\frac{1}{k})}|z'|^{-2},\quad\,x\in\Omega^{*}_{|z'|+|z'|^{2}}\setminus\Omega^{*}_{|z'|}.
\end{equation*}
\eqref{nabla_v1-v1*_inunitsize} follows. By the way, 
\begin{equation*}%\label{nabla_v1v1*_inunitsize}
|\nabla{v}_{1}(x)|\leq\,C|z'|^{-2},\quad\,x\in\Omega_{|z'|+|z'|^{2}}\setminus\Omega_{|z'|},
\quad|\nabla{v}^{*}_{1}(x)|\leq\,C|z'|^{-2},\quad\,x\in\Omega^{*}_{|z'|+|z'|^{2}}\setminus\Omega^{*}_{|z'|}.
\end{equation*}
So \eqref{nabla_v1v1*_inunitsize} follows.
\end{proof}

\begin{proof}[Proof of Theorem \ref{thm_energy}]
We only prove the case for $i=1$ that
\begin{equation}\label{a1i2}
\rho_{n}(\varepsilon)\int_{\widetilde{\Omega}}|\nabla{v}_{1}|^{2}
-\kappa_n=M_{1}\rho_{n}(\varepsilon)+
\left\{ \begin{aligned}
&O \left(\varepsilon^{\frac{1}{4}-\frac{1}{2k}} \right)\rho_{n}(\varepsilon), && \mbox{if}~n = 2,\\
&O\left(\varepsilon^{\frac{1}{2}-\frac{1}{2k}}|\log \varepsilon|\right)\rho_{n}(\varepsilon), &&\mbox{if}~ n = 3.
\end{aligned}\right.
\end{equation}
The case for $i=2$ is the same.

\noindent{\bf STEP 1.} For $0<\gamma\leq1/4$, we divide the integral into three parts:
\begin{equation*}%\label{energy_v1}
\int_{\widetilde{\Omega}}|\nabla{v}_{1}|^{2}
=\int_{\Omega_{\varepsilon^{\gamma}}}|\nabla{v}_{1}|^{2}
+\int_{\Omega_{R_{0}}\setminus\Omega_{\varepsilon^{\gamma}}}|\nabla{v}_{1}|^{2}+\int_{\widetilde{\Omega}\setminus\Omega_{R_{0}}}|\nabla{v}_{1}|^{2}
=:\mathrm{I}+\mathrm{II}+\mathrm{III}.
\end{equation*}

(i) For the first term $I$,
\begin{align}\label{nabla_v1_part3}
\int_{\Omega_{\varepsilon^{\gamma}}}|\nabla{v}_{1}|^{2}=
&\int_{\Omega_{\varepsilon^{\gamma}}}|\nabla\bar{u}|^{2}
+2\int_{\Omega_{\varepsilon^{\gamma}}}\nabla\bar{u}\cdot\nabla(v_{1}-\bar{u})
+\int_{\Omega_{\varepsilon^{\gamma}}}|\nabla(v_{1}-\bar{u})|^{2}.
\end{align}
Recalling \eqref{nablau_bar_inside},we have
$$\int_{\Omega_{\varepsilon^{\gamma}}}|\partial_{x'}\bar{u}|^{2} \le C \int_{|x'|<\varepsilon^\gamma} \frac{|x'|^2}{\varepsilon + |x'|^2} \,dx'  \le C \int_{|x'|<\varepsilon^\gamma} \,dx' = O\left(\varepsilon^{(n-1)\gamma}\right).$$
By combining Proposition \ref{prop_v1-u_bar}, we have
$$2\int_{\Omega_{\varepsilon^{\gamma}}}\nabla\bar{u}\cdot\nabla(v_{1}-\bar{u})
+\int_{\Omega_{\varepsilon^{\gamma}}}|\nabla(v_{1}-\bar{u})|^{2}=O\left(\varepsilon^{(n-1)\gamma}\right).
$$
Hence, it follows from \eqref{nabla_v1_part3} that
\begin{align*}
\mathrm{I}=\int_{\Omega_{\varepsilon^{\gamma}}}|\nabla{v}_{1}|^{2}
= \int_{\Omega_{\varepsilon^{\gamma}}}|\partial_{x_{n}}\bar{u}|^{2} + O\left(\varepsilon^{(n-1)\gamma}\right)
=&\int_{|x'|<\varepsilon^{\gamma}}\frac{dx'}{\varepsilon+h_{1}(x')-h_{2}(x')}+O\left(\varepsilon^{(n-1)\gamma}\right).
\end{align*}

(ii) For the second term, we divide it further as follows:
\begin{align*}
\mathrm{II}=\int_{\Omega_{R_{0}}\setminus\Omega_{\varepsilon^{\gamma}}}|\nabla{v}_{1}|^{2}
=&\int_{(\Omega_{R_{0}}\setminus\Omega_{\varepsilon^{\gamma}})\setminus(\Omega^{*}_{R_0}\setminus\Omega^{*}_{\varepsilon^{\gamma}})}|\nabla{v}_{1}|^{2}+\int_{\Omega^{*}_{R_0}\setminus\Omega^{*}_{\varepsilon^{\gamma}}}|\nabla(v_{1}-v^{*}_{1})|^{2}\\
&+2\int_{\Omega^{*}_{R_0}\setminus\Omega^{*}_{\varepsilon^{\gamma}}}\nabla{v}^{*}_{1}\cdot\nabla(v_{1}-v^{*}_{1})+\int_{\Omega^{*}_{R_0}\setminus\Omega^{*}_{\varepsilon^{\gamma}}}|\nabla{v}^{*}_{1}|^{2}
\\
=:&\mathrm{II}_{1}+\mathrm{II}_{2}+\mathrm{II}_{3}+\mathrm{II}_{4}.
\end{align*}
Noting that the thickness of $(\Omega_{R_{0}}\setminus\Omega_{\varepsilon^{\gamma}})\setminus(\Omega^{*}_{R_0}\setminus\Omega^{*}_{\varepsilon^{\gamma}})$ is $\varepsilon$, and using Lemma \ref{lem_49},
$$\mathrm{II}_{1}=\int_{(\Omega_{R_{0}}\setminus\Omega_{\varepsilon^{\gamma}})\setminus(\Omega^{*}_{R_0}\setminus\Omega^{*}_{\varepsilon^{\gamma}})}|\nabla{v}_{1}|^{2}
\leq\,C\varepsilon\int_{\varepsilon^{\gamma}<|x'|<R_0}\frac{dx'}{|x'|^{4}}\leq\,C\varepsilon^{1+(n-5)\gamma}.$$
For any $\varepsilon^{\gamma}\leq|z'|\leq{R}_{0}$, $0<\gamma\leq1/4$,
by Lemma \ref{lem_49}, if $\partial{D}_{1}^{*}$ and $\partial{D}_{2}^{*}$ are of $C^{k,1}$, $k\geq 3$, then we have
\begin{align*}
\mathrm{II}_{2}=\int_{\Omega^{*}_{R_0}\setminus\Omega^{*}_{\varepsilon^{\gamma}}}|\nabla(v_{1}-v_{1}^{*})|^{2}\leq&\,C\varepsilon^{1-\frac{1}{k}}\int_{\Omega^{*}_{R_0}\setminus\Omega^{*}_{\varepsilon^{\gamma}}}|x'|^{-4}dx'dx_{n}\\
\leq&\,C\varepsilon^{1-\frac{1}{k}}\int_{\varepsilon^{\gamma}<|x'|<R_0}\frac{dx'}{|x'|^{2}}\\
\leq&\begin{cases}
C\varepsilon^{1-\frac{1}{k}-\gamma},&\mbox{if}~~n=2,\\
C\gamma\varepsilon^{1-\frac{1}{k}}|\log\varepsilon|,\quad&\mbox{if}~~n=3,
\end{cases}
\end{align*}
and
$$|\mathrm{II}_{3}|\leq\left|2\int_{\Omega^{*}_{R_0}\setminus\Omega^{*}_{\varepsilon^{\gamma}}}\nabla{v}^{*}_{1}\cdot\nabla(v_{1}-v_{1}^{*})\right|\leq\begin{cases}
C\varepsilon^{1/2(1-\frac{1}{k})-\gamma},&\mbox{if}~~n=2,\\
C\gamma\varepsilon^{1/2(1-\frac{1}{k})}|\log\varepsilon|,\quad&\mbox{if}~~n=3.
\end{cases}$$
Now, we use the explicit function $\bar{u}^{*}$ to approximate $\nabla{v}_{1}^{*}$. Using \eqref{nablau_starbar} and \eqref{nablaw_0star}, a similar argument as in $I$ yields
\begin{align*}
\mathrm{II}_4
=\int_{\Omega^{*}_{R_0}\setminus\Omega^{*}_{\varepsilon^{\gamma}}}|\nabla{v}^{*}_{1}|^{2}=&\int_{\Omega^{*}_{R_{0}}\setminus\Omega^{*}_{\varepsilon^{\gamma}}}|\nabla\bar{u}^{*}|^{2}
+2\int_{\Omega^{*}_{R_{0}}\setminus\Omega^{*}_{\varepsilon^{\gamma}}}\nabla\bar{u}^{*}\cdot\nabla(v^{*}_{1}-\bar{u}^{*})
+\int_{\Omega^{*}_{R_{0}}\setminus\Omega^{*}_{\varepsilon^{\gamma}}}|\nabla(v^{*}_{1}-\bar{u}^{*})|^{2}\\
=&\int_{\Omega^{*}_{R_{0}}\setminus\Omega^{*}_{\varepsilon^{\gamma}}}|\partial_{x_{n}}\bar{u}^{*}|^{2}
+ A_{1}+O(\varepsilon^{(n-1)\gamma})\\
=&\int_{R_0>|x'|>\varepsilon^{\gamma}}\frac{dx'}{h_{1}(x')-h_{2}(x')}+ A_{1}+O(\varepsilon^{(n-1)\gamma}),
\end{align*}
where
\begin{align*}
A_{1}:=&2\int_{\Omega^{*}_{R_{0}}}\nabla\bar{u}^{*}\cdot\nabla(v^{*}_{1}-\bar{u}^{*})
+\int_{\Omega^{*}_{R_{0}}}\left(|\nabla(v^{*}_{1}-\bar{u}^{*})|^{2}+|\partial_{x'}\bar{u}^{*}|^{2}\right)
\end{align*}
is independent of $\varepsilon$.

For $0<\gamma\leq1/4$, $\varepsilon^{1+(n-5)\gamma} \le \varepsilon^{(n-1)\gamma}$,
it follows from these estimates above that
\begin{align*}
\mathrm{II}= \int_{R_0>|x'|>\varepsilon^{\gamma}}\frac{dx'}{h_{1}(x')-h_{2}(x')}+ A_{1}+O(\varepsilon^{(n-1)\gamma})+
\begin{cases}
O\left( \varepsilon^{1/2(1-\frac{1}{k})-\gamma} \right),&\mbox{if}~~n=2,\\
O\left( \varepsilon^{1/2(1-\frac{1}{k})}|\log\varepsilon| \right),\quad&\mbox{if}~~n=3.
\end{cases}
\end{align*}

(iii) For term $\mathrm{III}$, since
$$\Delta(v_{1}-v_{1}^{*})=0,\quad\mbox{in}~~\Omega\setminus{\big(D_{1}\cup{D}_{1}^{*}\cup{D}_{2}\cup{D}_{2}^{*}\cup\Omega_{R_{0}}\big)},$$
and
$$0<v_{1},v_{1}^{*}<1,\quad\mbox{in}~~\Omega\setminus{\big(D_{1}\cup{D}_{1}^{*}\cup{D}_{2}\cup{D}_{2}^{*}\cup\Omega_{R_{0}}\big)},$$
it follows that provided $\partial{D}_{1}^{*}$, $\partial{D}_{2}^{*}$ and $\partial \Omega$ are of $C^{k,1}$, $k \geq 3,$
$$|\nabla^{k}(v_{1}-v_{1}^{*})|\leq\,C(k),\quad\mbox{in}~~\Omega\setminus{\big(D_{1}\cup{D}_{1}^{*}\cup{D}_{2}\cup{D}_{2}^{*}\cup\Omega_{R_{0}}\big)},$$
where $C(k)$ is independent of $\varepsilon$. By an interpolation inequality with \eqref{v1-v1*}, we have
\begin{equation}\label{v1-v1*outside}
|\nabla(v_{1}-v_{1}^{*})|\leq\,C
\varepsilon^{1/2(1-\frac{1}{k})},\quad\mbox{in}~~\Omega\setminus{\big(D_{1}\cup{D}_{1}^{*}\cup{D}_{2}\cup{D}_{2}^{*}\cup\Omega_{R_{0}}\big)}.
\end{equation}
In view of the boundedness of $|\nabla{v}_{1}|$ in $(D_{1}^{*}\cup{D}_{2}^{*})\setminus(D_{1}\cup{D}_{2}\cup\Omega_{R_{0}})$ and  $(D_{1}\cup{D}_{2})\setminus(D_{1}^{*}\cup{D}_{2}^{*})$, and the fact that the volume of $(D_{1}^{*}\cup{D}_{2}^{*})\setminus(D_{1}\cup{D}_{2}\cup\Omega_{R_{0}})$ and $(D_{1}\cup{D}_{2})\setminus(D_{1}^{*}\cup{D}_{2}^{*})$ is less than $C\varepsilon$, and by using \eqref{v1-v1*outside}, we have
\begin{align*}%\label{nablav1_part1}
\mathrm{III}=&\int_{\Omega\setminus{\big(D_{1}\cup{D}_{1}^{*}\cup{D}_{2}\cup{D}_{2}^{*}\cup\Omega_{R_{0}}\big)}}|\nabla{v}_{1}|^{2}+O(\varepsilon)\nonumber\\
=&\int_{\Omega\setminus{\big(D_{1}\cup{D}_{1}^{*}\cup{D}_{2}\cup{D}_{2}^{*}\cup\Omega_{R_{0}}\big)}}|\nabla{v}_{1}^{*}|^{2}
+2\int_{\Omega\setminus{\big(D_{1}\cup{D}_{1}^{*}\cup{D}_{2}\cup{D}_{2}^{*}\cup\Omega_{R_{0}}\big)}}\nabla{v}_{1}^{*}\nabla(v_{1}-v_{1}^{*})\nonumber\\
&+\int_{\Omega\setminus{\big(D_{1}\cup{D}_{1}^{*}\cup{D}_{2}\cup{D}_{2}^{*}\cup\Omega_{R_{0}}\big)}}|\nabla(v_{1}-v_{1}^{*})|^{2}+O(\varepsilon)\nonumber\\
=&\int_{\widetilde{\Omega}^{*}\setminus\Omega^{*}_{R_0}}|\nabla{v}_{1}^{*}|^{2}+
O\left(\varepsilon^{1/2(1-\frac{1}{k})}\right).
\end{align*}

Now combining (i) (ii) and (iii) and using $0<\gamma\leq1/4$, we obtain
\begin{align}\label{energy_v1_refined}
\begin{split}
\int_{\widetilde{\Omega}}|\nabla{v}_{1}|^{2} =& \int_{R_0>|x'|>\varepsilon^{\gamma}}\frac{dx'}{h_{1}(x')-h_{2}(x')} + \int_{|x'|<\varepsilon^{\gamma}}\frac{dx'}{\varepsilon+h_{1}(x')-h_{2}(x')} \\
&+A_{2}+ O\left(\varepsilon^{(n-1)\gamma}\right)+\begin{cases}
O\left( \varepsilon^{1/2(1-\frac{1}{k})-\gamma} \right), &\mbox{if}~n=2;\\
O\left( \varepsilon^{1/2(1-\frac{1}{k})}|\log\varepsilon| \right), &\mbox{if}~n=3,
\end{cases}
\end{split}
\end{align}
where
$$A_{2}:=\int_{\widetilde{\Omega}^{*}\setminus\Omega^{*}_{R_{0}}}|\nabla{v}_{1}^{*}|^{2}+A_{1}.$$

\noindent{\bf STEP 2.} 
After a rotation of the coordinates if necessary, we assume that
\begin{equation}\label{h1h23}
h_{1}(x')-h_{2}(x')=\sum_{j=1}^{n-1}\frac{\lambda_{j}}{2}x_{j}^{2}+ \sum_{|\alpha| = 3}C_\alpha x'^\alpha + O(|x'|^4),\quad\,|x'|\leq\,R_{0},
\end{equation}
where $\mathrm{diag}(\lambda_{1},\cdots,\lambda_{n-1})=\nabla_{x'}^{2}(h_{1}-h_{2})(0')$, $C_\alpha$ are some constants, $\alpha$ is an $(n-1)$-dimensional multi-index. We call $\lambda_{1},\cdots,\lambda_{n-1}$ the relative principal curvatures of $\partial{D}_{1}$ and $\partial{D}_{2}$. 

To evaluate the first two terms in \eqref{energy_v1_refined}, we would like to replace $h_1(x')-h_2(x')$ by the quadratic polynomial $\sum_{j = 1}^{n-1} \frac{\lambda_j}{2} x_j^2$. First, under the assumption \eqref{h1h20}--\eqref{h1h21} and \eqref{h1h23}, we have
\begin{align*}
&\int_{R_0>|x'|>\varepsilon^{\gamma}}\frac{dx'}{h_{1}(x')-h_{2}(x')} - \int_{R_0>|x'|>\varepsilon^{\gamma}}\frac{dx'}{\sum_{j = 1}^{n-1} \frac{\lambda_j}{2} x_j^2}\\
=& \int_{R_0>|x'|>\varepsilon^{\gamma}}\left[\frac{1}{\sum_{j=1}^{n-1}\frac{\lambda_{j}}{2}x_{j}^{2}+ \sum_{|\alpha| = 3}C_\alpha x'^\alpha + O(|x'|^4)} - \frac{1}{\sum_{j = 1}^{n-1} \frac{\lambda_j}{2} x_j^2}\right]dx'\\
=& \int_{R_0>|x'|>\varepsilon^{\gamma}}\frac{1}{\sum_{j = 1}^{n-1} \frac{\lambda_j}{2} x_j^2} \left[ \left( 1 + \frac{\sum_{|\alpha| = 3}C_\alpha x'^\alpha}{\sum_{j = 1}^{n-1} \frac{\lambda_j}{2} x_j^2} + O(|x'|^2) \right)^{-1} - 1\right]\, dx'\\
=& \int_{R_0>|x'|>\varepsilon^{\gamma}}\frac{1}{\sum_{j = 1}^{n-1} \frac{\lambda_j}{2} x_j^2} \left[ \left( 1 - \frac{\sum_{|\alpha| = 3}C_\alpha x'^\alpha}{\sum_{j = 1}^{n-1} \frac{\lambda_j}{2} x_j^2} + O(|x'|^2) \right) - 1\right]\, dx',
\end{align*}
where in last line we use Taylor expansion due to the smallness of $R_0$. Note that $\frac{\sum_{\alpha}C_\alpha x'^\alpha}{\sum_j  (\lambda_j /2) x_j^2}$ is odd and the integrating domain is symmetric, we have
$$\int_{R_0>|x'|>\varepsilon^{\gamma}}\frac{dx'}{h_{1}(x')-h_{2}(x')} - \int_{R_0>|x'|>\varepsilon^{\gamma}}\frac{dx'}{\sum_{j = 1}^{n-1} \frac{\lambda_j}{2} x_j^2} = \int_{R_0>|x'|>\varepsilon^{\gamma}} O(1) \, dx' = \tilde{C} + O\left(\varepsilon^{(n-1)\gamma}\right),$$
where $\tilde{C}$ is some constant depending on $n, R_0, \lambda_j$ but not $\varepsilon$. Similarly, we have
$$\int_{|x'|<\varepsilon^{\gamma}}\frac{dx'}{\varepsilon+h_{1}(x')-h_{2}(x')} - \int_{|x'|<\varepsilon^{\gamma}}\frac{dx'}{\varepsilon + \sum_{j = 1}^{n-1} \frac{\lambda_j}{2} x_j^2} =\int_{|x'|<\varepsilon^{\gamma}} O(1) \, dx' = O\left(\varepsilon^{(n-1)\gamma}\right).$$
Therefore, \eqref{energy_v1_refined} becomes
\begin{align}\label{energy_v1_refined2}
\begin{split}
\int_{\widetilde{\Omega}}|\nabla{v}_{1}|^{2} =& \int_{R_0>|x'|>\varepsilon^{\gamma}}\frac{dx'}{\sum_{j = 1}^{n-1} \frac{\lambda_j}{2} x_j^2} + \int_{|x'|<\varepsilon^{\gamma}}\frac{dx'}{\varepsilon+\sum_{j = 1}^{n-1} \frac{\lambda_j}{2} x_j^2} \\
&+A_{3}+ O\left(\varepsilon^{(n-1)\gamma}\right)+\begin{cases}
O\left( \varepsilon^{1/2(1-\frac{1}{k})-\gamma} \right), &\mbox{if}~n=2;\\
O\left( \varepsilon^{1/2(1-\frac{1}{k})}|\log\varepsilon| \right), &\mbox{if}~n=3,
\end{cases}
\end{split}
\end{align}
where
$$A_{3}: = A_{2} + \tilde{C}.$$

\noindent{\bf STEP 3.} Now we deal with the first two explicit terms in \eqref{energy_v1_refined2}. 

(i) For $n=2$, we have
\begin{align*}
&2 \left( \int_{\varepsilon^\gamma}^{R_0} \frac{dx_1}{\frac{\lambda_1}{2}x_1^2 } + \int_0^{\varepsilon^\gamma} \frac{dx_1}{\varepsilon + \frac{\lambda_1}{2}x_1^2 } \right) \\
=& 2 \left( \int_{\varepsilon^\gamma}^{R_0} \frac{dx_1}{\frac{\lambda_1}{2}x_1^2 } -  \int_{\varepsilon^\gamma}^{\infty} \frac{dx_1}{\frac{\lambda_1}{2}x_1^2 } \right) + 2 \int_0^\infty  \frac{dx_1}{\varepsilon +\frac{\lambda_1}{2}x_1^2 }+O\left( \varepsilon^{1/2(1-\frac{1}{k})-\gamma} \right)\\
=& -\frac{4}{\lambda_1 R_0} +\frac{1}{\rho_n(\varepsilon)} \frac{\sqrt{2}\pi}{\sqrt{\lambda_1}}  +O\left( \varepsilon^{1/2(1-\frac{1}{k})-\gamma} \right),
\end{align*}
where we use in second line,
$$\left| \int_{\varepsilon^\gamma}^\infty \frac{1}{\varepsilon +\frac{\lambda_1}{2}x_1^2 } - \frac{1} {\frac{\lambda_1}{2}x_1^2} \, dx_1 \right| \le C \varepsilon \int_{\varepsilon^\gamma}^\infty \frac{dx_1}{x_1^4} = O\left(\varepsilon^{1 - 3\gamma}\right) \le O\left( \varepsilon^{1/2(1-\frac{1}{k})-\gamma} \right).$$
Therefore,
$$\int_{\widetilde{\Omega}}|\nabla{v}_{1}|^{2} = \frac{1}{\rho_n(\varepsilon)} \frac{\sqrt{2}\pi}{\sqrt{\lambda_1}}+(A_{3}-\frac{4}{\lambda_1 R_0})+O\left(\varepsilon^{(n-1)\gamma}\right)+O\left( \varepsilon^{1/2(1-\frac{1}{k})-\gamma} \right).$$

(ii) For $n=3$,
\begin{align}\label{n=3part1}
\begin{split}
&  \int_{\varepsilon^\gamma < |x'| < R_0} \frac{dx'}{\frac{\lambda_1}{2}x_1^2 + \frac{\lambda_2}{2}x_2^2} +  \int_{|x'| < \varepsilon^\gamma} \frac{dx'}{\varepsilon + \frac{\lambda_1}{2}x_1^2 + \frac{\lambda_2}{2}x_2^2} \\
=& \int_{|x'| <R_0} \frac{dx'}{\varepsilon + \frac{\lambda_1}{2}x_1^2 + \frac{\lambda_2}{2}x_2^2}+ O\left( \varepsilon^{1/2(1-\frac{1}{k})}|\log\varepsilon| \right),
\end{split}
\end{align}
where we used that
\begin{align*}
\left|\int_{\varepsilon^\gamma < |x'| < R_0} \frac{1}{\varepsilon+ \frac{\lambda_1}{2}x_1^2 + \frac{\lambda_2}{2}x_2^2} -  \frac{1}{\frac{\lambda_1}{2}x_1^2 + \frac{\lambda_2}{2}x_2^2} \,dx' \right| \le& C\varepsilon \int_{\varepsilon^\gamma < |x'| < R_0} \frac{dx'}{|x'|^4} = O\left(\varepsilon^{1-2\gamma} \right)\\
\le& O\left( \varepsilon^{1/2(1-\frac{1}{k})}|\log\varepsilon| \right).
\end{align*}
Denote $R(\theta):= R_0 (\frac{2}{\lambda_1} \cos^2 \theta + \frac{2}{\lambda_2} \sin^2 \theta)^{-1/2}$. After a change of variables, the first term of \eqref{n=3part1} becomes
\begin{align}\label{n=3part2}
\begin{split}
\int_{|x'| <R_0} \frac{dx'}{\varepsilon + \frac{\lambda_1}{2}x_1^2 + \frac{\lambda_2}{2}x_2^2} =& \frac{2}{\sqrt{\lambda_1 \lambda_2}} \int_0^{2\pi} \int_0^{R(\theta)} \frac{r }{\varepsilon + r^2} \, drd\theta\\
=& \left. \frac{1}{\sqrt{\lambda_1 \lambda_2}} \int_0^{2\pi} \ln (\varepsilon + r^2) \right|_{r = 0}^{R(\theta)} \, d\theta\\
=&\frac{1}{\rho_n(\varepsilon)} \frac{2\pi}{\sqrt{\lambda_1 \lambda_2}} +  \frac{1}{\sqrt{\lambda_1 \lambda_2}} \int_0^{2\pi} \ln (R(\theta)^2) + \ln(1 + \frac{\varepsilon}{R(\theta)^2}) \, d\theta\\
=&\frac{1}{\rho_n(\varepsilon)} \frac{2\pi}{\sqrt{\lambda_1 \lambda_2}} +  \frac{2}{\sqrt{\lambda_1 \lambda_2}} \int_0^{2\pi} \ln R(\theta)\, d\theta + O(\varepsilon),
\end{split}
\end{align} 
where we use the fact that $R(\theta)^2$ has a positive lower bound that is greater than $\varepsilon$, and the Taylor expansion of $\ln (1+x)$, for $|x| < 1$. Combining \eqref{n=3part1} and \eqref{n=3part2}, we conclude that for $n=3$,
\begin{align*}
\int_{\widetilde{\Omega}}|\nabla{v}_{1}|^{2} =&\frac{1}{\rho_n(\varepsilon)} \frac{2\pi}{\sqrt{\lambda_1 \lambda_2}} + \left(A_3 +\frac{2}{\sqrt{\lambda_1 \lambda_2}} \int_0^{2\pi} \ln R(\theta)\, d\theta \right)\\
&+ O\left(\varepsilon^{(n-1)\gamma}\right) + O\left( \varepsilon^{1/2(1-\frac{1}{k})}|\log\varepsilon| \right).\\
\end{align*}

We now define
$$\kappa_n:=
\begin{cases}
\frac{\sqrt{2}\pi}{\sqrt{\lambda_{1}}},&n=2,\\
\frac{2\pi}{\sqrt{\lambda_{1}\lambda_{2}}},&n=3,
\end{cases}
\quad \text{ and }
M_{1}:=
\begin{cases}
A_{3} - \frac{4}{\lambda_1 R_0},&n=2,\\
A_{3} + \frac{2}{\sqrt{\lambda_1 \lambda_2}} \int_0^{2\pi} \ln R(\theta)\,d\theta,&n=3.
\end{cases}
$$
Taking $\gamma=1/4$, $\varepsilon^{(n-1)\gamma},  \varepsilon^{1/2(1-\frac{1}{k})-\gamma}$ (or $\varepsilon^{1/2(1-\frac{1}{k})}|\log\varepsilon|$) are
smaller than $\varepsilon^{\frac{1}{4}-\frac{1}{2k}}$ (or $\varepsilon^{\frac{1}{2}-\frac{1}{2k}}|\log \varepsilon|$), and \eqref{a1i2} is proved. It is not difficult to prove that $M_{1}$ is independent of $R_{0}$. If not, suppose that there exist $M_{1}(R_{0})$ and $M_{1}(\tilde{R}_{0})$, both independent of $\varepsilon$, such that \eqref{a1i2} holds, then
$$M_{1}(R_{0})-M_{1}(\tilde{R}_{0})=\left\{ \begin{aligned}
&O \left(\varepsilon^{\frac{1}{4}-\frac{1}{2k}} \right), && \mbox{if}~n = 2,\\
&O\left(\varepsilon^{\frac{1}{2}-\frac{1}{2k}}|\log \varepsilon|\right), &&\mbox{if}~ n = 3,
\end{aligned}\right.$$
which implies that $M_{1}(R_{0})=M_{1}(\tilde{R}_{0})$.
\end{proof}

%\vspace{.5cm}

%%%%%%%%%%%%%%%%%%%%%%%%%%%%%%%%%%%%%%%%%%%%%%%%%%%%%%%%%%%%
%%%%%%%%%%%%%%%%%%%%%%%%%%%%%%%%%%%%%%%%%%%%%%%%%%%%%%%%%%%%%%

%


\begin{thebibliography}{99}



\bibitem{akl} H. Ammari; H. Kang; M. Lim, Gradient estimates to the conductivity problem. Math.
Ann. 332 (2005), 277-286.

\bibitem{ackly} H. Ammari; G. Ciraolo; H. Kang; H. Lee; K. Yun, Spectral analysis of the Neumann-Poincar\'e operator and characterization of the stress concentration in anti-plane elasticity. Arch. Ration. Mech. Anal.  208  (2013),  275-304.


\bibitem{adkl}  H. Ammari; H. Dassios; H. Kang; M. Lim, Estimates for the electric field in the
presence of adjacent perfectly conducting spheres. Quat. Appl. Math. 65
(2007), 339-355.

\bibitem{aklll}  H. Ammari; H. Kang; H. Lee; J. Lee; M. Lim, Optimal estimates for the electrical
field in two dimensions. J. Math. Pures Appl. 88 (2007), 307-324.


\bibitem{akllz} H. Ammari; H. Kang; H. Lee; M. Lim; H. Zribi, Decomposition theorems and fine estimates for electrical fields in the presence of closely located circular inclusions. J.
Differential Equations 247 (2009), 2897-2912.

\bibitem{basl} I. Babu\u{s}ka; B. Andersson; P. Smith; K.
Levin, Damage analysis of fiber composites. I. Statistical analysis
on fiber scale. Comput. Methods Appl. Mech. Engrg. 172 (1999), 27-77.


\bibitem{bly1} E. Bao; Y.Y. Li; B. Yin, Gradient estimates for the perfect conductivity problem. Arch. Ration. Mech. Anal. 193 (2009), 195-226.

\bibitem{bly2} E. Bao; Y.Y. Li; B. Yin, Gradient estimates for the perfect and insulated conductivity problems with multiple inclusions. Comm. Partial Differential Equations 35 (2010), 1982-2006.

\bibitem{bjl} J.G. Bao; J.H. Ju; H.G. Li, Optimal boundary gradient estimates for Lam\'e systems with partially infinite coefficients. Adv. Math. 314 (2017), 583-629. 

\bibitem{bll} J.G. Bao; H.G. Li; Y.Y. Li, Gradient estimates for solutions of the Lam\'e system with partially infinite coefficients, Arch. Ration. Mech. Anal.  215 (2015), no. 1, 307-351.

\bibitem{bll2} J.G. Bao; H.G. Li; Y.Y. Li, Gradient estimates for solutions of the Lam\'e system with partially infinite coefficients in dimensions greater than two. Adv. Math. 305 (2017), 298-338.

\bibitem{bgn} L. Berlyand; Y. Gorb; A. Novikov, Fictitious fluid approach and
anomalous blow-up of the dissipation rate in a 2D model of concentrated suspensions, Arch. Rat. Mech. Anal., 193 (2009), no. 3, 585-622.

\bibitem{bt} E. Bonnetier; F. Triki, On the spectrum of the Poincar\'e variational problem for two close-to-touching inclusions in 2D. Arch. Ration. Mech. Anal. 209 (2013), no. 2, 541-567.

\bibitem{bv} E. Bonnetier; M. Vogelius, An elliptic regularity result for a composite medium with ``touching'' fibers of circular cross-section. SIAM J. Math. Anal. 31 (2000), 651-677.

\bibitem{bc}  B. Budiansky; G.F. Carrier, High shear stresses in stiff fiber composites. J. App. Mech. 51 (1984), 733-735.

\bibitem{dl} H.J. Dong; H.G. Li, Optimal estimates for the conductivity problem by Green's function method.  Arch. Ration. Mech. Anal. 231 (2019), no. 3, 1427-1453.

\bibitem{gorb2} Y. Gorb, Singular behavior of electric field of high-contrast concentrated composites. Multiscale Model. Simul. 13 (2015), no. 4, 1312-1326.

\bibitem{gn} Y. Gorb; A. Novikov, Blow-up of solutions to a $p$-Laplace equation. Multiscale Model. Simul. 10 (2012), no. 3, 727-743.

\bibitem{gt} D. Gilbarg, N.S. Trudinger: Elliptic partial differential equations of second order. Reprint of the 1998 edition. Classics in Mathematics. Springer-Verlag, Berlin, 2001. xiv+517 pp. ISBN: 3-540-41160-7.

\bibitem{kly}  H. Kang; M. Lim; K. Yun, Asymptotics and computation of the solution to the conductivity equation in the presence of adjacent inclusions with extreme conductivities. J. Math. Pures Appl. (9) 99 (2013), 234-249.

\bibitem{kly2}  H. Kang; M. Lim; K. Yun,  Characterization of the electric field concentration between two adjacent spherical perfect conductors.  SIAM J. Appl. Math. 74 (2014), 125-146.

\bibitem{keller} J.B. Keller, Stresses in narrow regions, Trans. ASME J. Appl. Mech. 60 (1993), 1054-1056.

\bibitem{llby} H.G. Li; Y.Y. Li; E.S. Bao; B. Yin, Derivative estimates of solutions of elliptic systems in narrow regions. Quart. Appl. Math. 72 (2014), 589-596.

\bibitem{ll} H.G. Li; Y.Y. Li, Gradient estimates for parabolic systems from composite material. Sci. China Math. 60 (2017), no. 11, 2011-2052.

\bibitem{lx} H.G. Li; L.J.Xu, Optimal estimates for the perfect conductivity problem with inclusions close to the boundary. SIAM J. Math. Anal. 49 (2017), no. 4, 3125-3142.

\bibitem{ln}  Y.Y. Li; L. Nirenberg, Estimates for elliptic system from composite material. Comm. Pure Appl. Math. 56 (2003), 892-925.


\bibitem{lv} Y.Y. Li; M. Vogelius, Gradient stimates for solutions to divergence form elliptic equations with discontinuous coefficients. Arch. Rational Mech. Anal. 153 (2000), 91-151.

\bibitem{ly}M. Lim; K. Yun, Lim, Blow-up of electric fields between closely spaced spherical perfect conductors. Comm. Partial Differential Equations 34 (2009), no. 10-12, 1287-1315.

\bibitem{lyu} M. Lim; S. Yu, Asymptotics of the solution to the conductivity equation in the presence of adjacent circular inclusions with finite conductivities. J. Math. Anal. Appl.  421  (2015),  no. 1, 131-156.

\bibitem{m} X. Markenscoff, Stress amplification in vanishingly small geometries. Computational Mechanics 19  (1996), 77-83.

%\bibitem{wang} Y.P. Wang, An unpublished note.


\bibitem{y1} K. Yun, Estimates for electric fields blown up between closely adjacent conductors with arbitrary shape. SIAM J. Appl. Math. 67 (2007),  714-730.

\bibitem{y2} K. Yun, Optimal bound on high stresses occurring between stiff fibers with arbitrary shaped cross-sections. J. Math. Anal. Appl. 350 (2009), 306-312.

\end{thebibliography}
\end{document}